\documentclass[oneside, a4paper, 11pt]{amsart}
\usepackage[utf8]{inputenc}
\usepackage[T1]{fontenc}
\usepackage{amsmath,amsthm,amsrefs,amssymb,bbm,comment,mathrsfs,hyperref}

\calclayout

\pdfoutput=1
\usepackage{graphicx,enumitem,stmaryrd}

\usepackage{anyfontsize}
\allowdisplaybreaks[4]

\usepackage[table]{xcolor}

\usepackage{adjustbox}

\newcommand{\Z}{\mathbb{Z}}

\newcommand{\cB}{\mathcal{B}}
\newcommand{\cC}{\mathcal{C}}

\newtheorem{theorem}{Theorem}
\newtheorem{Theox}{Theorem}

\newtheorem{corollary}[theorem]{Corollary}

\newtheorem{remark}[theorem]{Remark}
\newtheorem{definition}[theorem]{Definition }
\newtheorem{example}[theorem]{Example}
\newtheorem{lemma}[theorem]{Lemma}
\newtheorem{proposition}[theorem]{Proposition}

\newcommand{\Ker}[1]{\mathrm{Ker}(#1)}
\newcommand{\asym}{\widehat{A}}
\newcommand{\cT}{\mathcal{T}}
\newcommand{\ann}{\mathrm{ann}}
\newcommand{\Prim}{\mathrm{Prim}}
\newcommand{\rank}{\mathrm{rank}}
\newcommand{\MaxSpec}{\mathrm{MaxSpec}}
\newcommand{\rad}{\mathrm{rad}}
\newtheorem{note}{Remark}[section]
\newcommand{\tld}[1]{\widetilde{#1}}

\newcommand{\br}[1]{\overline{#1}}
\newcommand{\End}{\mathrm{End}}
\newcommand{\Hom}{\mathrm{Hom}}
\newcommand{\Ann}{\mathrm{ann}}
\newcommand{\idm}{\mathbf{I}_n}
\newcommand{\RR}{\mathbb{R}}
\newcommand{\cL}{\mathcal{L}}
\newcommand{\ZZ}{\mathbb{Z}}

\title{Affine cellular algebras and asymptotic algebras}

\author[P. Carvalho]{Paula A.A.B. Carvalho}
\address{Departamento de Matem\'atica, Universidade do Porto, Rua Campo Alegre
  697, 4169-007 Porto, Portugal}
\email{pbcarval@fc.up.pt}

\author[S. Koenig]{Steffen Koenig}
\address{Institute of Algebra and Number Theory, University of Stuttgart,
  Pfaffenwaldring 57, 70569 Stutt\-gart, Germany}
\email{skoenig@mathematik.uni-stuttgart.de}

\author[C. Lomp]{Christian Lomp}
\address{Departamento de Matem\'atica, Universidade do Porto, Rua Campo Alegre
  697, 4169-007 Porto, Portugal}
\email{clomp@fc.up.pt}

\date{\today}

\begin{document}

\begin{abstract}
  The theory of affine cellular algebras $A$ is extended to incorporate
  their asymptotic algebras $\hat{A}$, clarifying unexpected differences
  between classical and affine situations and comparing with Lusztig's
  asymptotic Hecke algebras. The main new results are about a double
  centraliser property between $A$ and $\hat{A}$, about constructing
  $\hat{A}$ from cell modules of $A$, about existence of an embedding
  $A \rightarrow \hat{A}$ and about a faithful functor from torsionless
  $\hat{A}$-modules to $A$-modules as well as about the embedding being weakly
  spectrum preserving (in the sense of Baum and Nistor) and about
  non-zero endomorphisms of cell modules being injective, while there are
  no non-zero homomorphisms between non-isomorphic cell modules.
\end{abstract}

\maketitle

\tableofcontents

\section{Introduction}

The aim of this article is to clarify new and sometimes unexpected
structures of affine cellular algebras and properties of embeddings into
asymptotic algebras, to illustrate these structures and properties by examples
and to
put them into the context of algebraic Lie theory. This continues and builds
upon \cite{KXaffine} and \cite{CKLS}.

Plenty of finite dimensional algebras arising in algebraic Lie theory have
been shown to be cellular algebras. For instance, group algebras of symmetric
groups and their Hecke algebras as well as classical Schur algebras, blocks
of the Bernstein-Gelfand-Gelfand category $\mathcal O$ of semisimple complex
Lie algebras, Temperley Lieb algebras and Brauer algebras are widely studied
examples of cellular algebras, as defined by Graham and Lehrer
\cite{GrahamLehrer}. The definition in \cite{GrahamLehrer}
is using combinatorial properties of a cellular basis that
allows to define and investigate cell modules, which generalise Specht
modules, Weyl modules and Verma modules. A cellular structure allows to
parametrise simple modules in terms of a finite partially ordered set, and to
describe them in terms of linear algebra over a field that has been fixed
from the beginning. 

Subsequently, an equivalent definition of cellularity has been given
\cite{KXcellular} in
terms of a finite chain of  two-sided ideals -- indexed by the same finite
partially ordered set -- with subquotients being direct sums of cell modules.
This led to further ring theoretical and homological properties. The easiest
cellular algebras are split semisimple, more general quasi-hereditary ones
still have finite global dimension and going beyond these, one arrives at
infinite global dimension \cite{KXwhenis}.
In the semisimple or quasi-hereditary case,
subquotients of the cell chain all are idempotent ideals, while in the
non-quasi-hereditary situation some subquotients necessarily have square zero. 

To extend cellularity to not necessarily finite dimensional algebras, in
\cite{KXaffine} affine cellular algebras have been introduced,
and then plenty of algebras have been
shown to be affine cellular, for instance affine Hecke algebras,
KLR algebras, affine Schur algebras and affine Temperley-Lieb algebras.
Here, there is a common affine ground ring and the cell chain still is
finite, indexed by a finite partially ordered set as before. However,
each subquotient comes with its own affine commutative ring and the
subquotient is, as a non-unital algebra, a generalised matrix ring
(introduced by Brown \cite{Brown1955}) over this affine commutative algebra,
with matrix multiplication deformed by adding a fixed matrix as an
intermediate third factor into ordinary matrix multiplication;
this matrix is called swich matrix. Simple modules still are finite
dimensional (now over quotients of the affine rings modulo maximal ideals)
and they are classified by a spectrum, depending on the entries of
the swich matrices. Finite products
of matrix rings over affine commutative rings can
be seen as the analogues of semisimple cellular algebras, and subquotients of
cell ideals that are idempotent produce an analogue of a quasi-hereditary
situation; this motivated Kleshchev to define graded affine quasi-hereditary
algebras \cite{Kleshchev}. In the general situation there are many
other cases than idempotent subquotients or those squaring to zero. As we
will show, the situation is controlled by determinants of swich matrices
(not) being a zero-divisor in the affine commutative
rings of the subquotients. 

Our main focus is on a new ingredient of affine cell theory: Given an affine
cellular algebra A, there exists \cite{KXaffine} an algebra called asymptotic
algebra $\hat{A}$, that is a direct sum of matrix algebras over the
affine rings associated with the subquotients in the cell chain of A, and
by \cite{KXaffine}
there is an algebra homomorphism from A to $\hat{A}$. The subquotients of the
cell chain of A coincide as sets with the components of $\hat{A}$. If A is
finite dimensional cellular in the sense of \cite{GrahamLehrer},
the homomorphism can be
injective only when A is semisimple and coincides with $\hat{A}$, which has
the same dimension. In a different way,
Lusztig had defined in \cite{Lusztig2} asymptotic algebras for Hecke algebras
of finite or affine type over ground rings that are not fields, but for
instance Laurent polynomial rings over the integers. In Lusztig's work and
its applications, asymptotic algebras and embeddings of Hecke algebras into
their asymptotic algebras turned out to be very useful in various respects. 
The general theory of asymptotic algebras of affine cellular algebras has been
started in \cite{KXaffine} and continued in \cite{CKLS}. The current article
aims at extending and deepening this theory, at clarifying the situation in
various applications and at identifying Lusztig's asymptotic $\hat{A}$
with those defined in \cite{KXaffine} in relevant situations.
\medskip

The main results of the theory being developed can be stated informally as
follows: Let $A$ be an affine cellular algebra as defined in \cite{KXaffine},
with a fixed affine cell chain $0=J_0 \subset J_1 \subset J_2 \subset \dots
J_n=A$ and subquotients $J_j/J_{j-1}$ being generalised matrix algebras, where
the multiplication in $J_j/J_{j-1}$ is deformed by adding a fixed middle factor
$\psi^{(j)}$ called swich matrix. Let
$\hat{A}$ be the asymptotic algebra of $A$ and $\iota: A \rightarrow \hat{A}$.
\smallskip

Finite dimensional cellular algebras are either quasi-hereditary
(the desirable case with the best properties), equivalently of finite global
dimension (see \cite{KXwhenis}), or not quasi-hereditary and equivalently
of infinite global dimension. Being quasi-hereditary means that all swich
matrices are invertible, which means their determinants are invertible.
Otherwise, at least some swich matrices have zero determinants.
As we will see, for affine cellular algebras, the desirable case is that the swich matrices are not zero-divisors. 

Under this frequently satisfied assumption, the connection between the
affine cellular algebra $A$ and its asymptotic algebra $\hat{A}$ is
particularly close, as shown by Theorems A, B and C.

\begin{Theox}[special case of Theorem \ref{TheoremAtext}]
  \label{TheoremA}
  Suppose that no determinant of a swich matrix $\psi^{(j)}$
  (for $j = 1, \dots, n$) is a zero-divisor.
  Then there is an embedding $\theta: A \rightarrow \hat{A}$, which is
  explicitly given by the isomorphisms between the endomorphism rings of
  the subquotients ${(J_j/J_{j-1})}_A$ and the generalised matrix rings. 
\end{Theox}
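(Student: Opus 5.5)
We construct $\theta$ component by component and then prove injectivity by induction along the cell chain. For each $j$, the subquotient $J_j/J_{j-1}$ is (as a non-unital algebra) the generalised matrix algebra $M_{n_j}(B_j,\psi^{(j)})$, where $B_j$ is the affine commutative ring and the multiplication is $x\cdot y = x\psi^{(j)}y$. The algebra $A$ acts on $J_j/J_{j-1}$ by right multiplication, and by the affine cell axioms $J_j/J_{j-1}\cong V_j\otimes_{B_j} B_j\otimes_{B_j} V_j'$ as a bimodule, with $V_j\cong B_j^{n_j}$ free. Hence right multiplication gives a ring homomorphism
\[
\rho_j\colon A\to \End_{B_j}(V_j')\cong M_{n_j}(B_j)\quad(\text{up to opposite/transpose conventions}).
\]
Equivalently, $\rho_j$ is the map $A\to\End_A((J_j/J_{j-1})_A)$ composed with an identification of this endomorphism ring with $M_{n_j}(B_j)$. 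We set $\theta=(\rho_j)_j\colon A\to\asym=\bigoplus_j M_{n_j}(B_j)$. Concretely, for $a\in J_j$ with image $x\in M_{n_j}(B_j)$, left multiplication inside the subquotient is $y\mapsto x\psi^{(j)}y$, so $\rho_j(a)$ is $x\psi^{(j)}$ (or $\psi^{(j)}x$, depending on the side). The first step is to verify that $\theta$ is a homomorphism; this follows from the fact that multiplication in $J_j/J_{j-1}$ is associative and $A$-linear.

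Injectivity is proved by induction on $j$, showing that $\theta$ restricted to $J_j$ is injective. Let $0\neq a\in J_j$, and choose $j$ minimal with $a\in J_j$, so that $a$ has nonzero image $x$ in $J_j/J_{j-1}$. By the formula above, $\rho_j(a)=x\psi^{(j)}$ up to side. The key lemma is the following: if $\det\psi^{(j)}$ is not a zero-divisor in $B_j$, then $x\psi^{(j)}=0$ implies $x=0$. To prove it, multiply on the right by the adjugate matrix to get $x\cdot\det\psi^{(j)}=0$, and then cancel entrywise, which is allowed because $\det\psi^{(j)}$ is a non-zero-divisor. Hence $\rho_j(a)\neq0$, so $\theta(a)\neq 0$. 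Note that components $\rho_k$ with $k<j$ are not needed here; only the index at which $a$ first appears matters.

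I expect the main obstacle to be the first step: making rigorous that $\rho_j(a)$ for $a\in J_j$ really equals $x\psi^{(j)}$, i.e.\ that the action of $a$ on the subquotient factors through its class modulo $J_{j-1}$. This holds because $J_{j-1}$ annihilates $J_j/J_{j-1}$ as a module, but one must also check that it is compatible with the chosen isomorphism $\End_A(J_j/J_{j-1})\cong M_{n_j}(B_j)$. That isomorphism itself uses the non-zero-divisor hypothesis to show that $A$-endomorphisms are $B_j$-linear maps of $V_j'$, so I would state the comparison carefully, checking it on the matrix units $E_{pq}\otimes b$. Beyond that comparison, the argument reduces to the adjugate computation above.
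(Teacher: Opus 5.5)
Your proposal is correct and is essentially the paper's argument. In both, $\theta_j(a)=\alpha_j(ae_j)$ is left multiplication on $(J_j/J_{j-1})_A$ transported to $M_{d_j}(B_j)$, and for $a\in J_j\setminus J_{j-1}$ it equals $\alpha_j(\bar a)\psi^{(j)}\neq 0$. This is exactly the injectivity the paper imports from \cite[Lemmas 3.2, 4.2]{CKLS}, which you prove directly with the adjugate.

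The only blemish is side bookkeeping: right multiplication lands in $\End({}_A(J_j/J_{j-1}))$, not $\End((J_j/J_{j-1})_A)$. You should therefore commit to left multiplication (the action on $\Delta_j$), as your concrete formula $x\psi^{(j)}$ already does.
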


\begin{Theox}[Theorem \ref{TheoremBtext}]
\label{TheoremB}
Suppose that no determinant of a swich matrix $\psi^{(j)}$ (for $j = 1, \dots, n$)
is a zero-divisor and all cell subquotients are idempotent ideals in the
quotient algebras of $A$ modulo lower affine cell ideals. Then the embedding
$\theta: A \rightarrow \hat{A}$ induces a fully
faithful functor from torsionless $\hat{A}$-modules to A-modules.
\end{Theox}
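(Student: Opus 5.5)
We prove Theorem B by restriction of scalars along $\theta$: every $\hat{A}$-module $M$ becomes an $A$-module $\theta^*M$, and $\hat{A}$-linear maps are automatically $A$-linear. This gives a faithful functor $\theta^*$. What remains is fullness on torsionless $\hat{A}$-modules: for such $M,N$, every $A$-linear map $f\colon \theta^*M\to\theta^*N$ should be $\hat{A}$-linear.

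The strategy is to show that $\theta(A)$ is ``large'' in $\hat{A}$ in the sense needed. Write $\hat{A}=\bigoplus_j M_{n_j}(B_j)$. Using the explicit form of $\theta$ from Theorem A, the image of $J_j$ lands in the components of index $\le j$. Modulo the lower components, $\theta(J_j)$ is the generalised matrix ideal, which in $M_{n_j}(B_j)$ looks like $M_{n_j}(B_j)\,\psi^{(j)}$ or a similar twist. Set $d_j=\det\psi^{(j)}$, which is a non-zero-divisor in $B_j$. Multiplying by the adjugate of $\psi^{(j)}$ then gives
\[
d_j\,e_j\hat{A}\subseteq \theta(A)+\bigoplus_{i<j}M_{n_i}(B_i),
\]
where $e_j$ is the unit of the $j$-th component.

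Next comes an induction over the cell chain, which is where the idempotency hypothesis enters. Since $J_j/J_{j-1}$ is idempotent in $A/J_{j-1}$, every element of the $j$-th component lies in the span of products $\theta(x)\,a\,\theta(y)$ with $x,y\in J_j$, up to lower components. Moreover, the elements $\theta(x)$ for $x\in J_j$ act on $M$ through $e_jM$ modulo lower parts. Consider $f$ together with $a\in e_j\hat{A}$ and $m\in M$. Then
\[
d_j\bigl(f(am)-af(m)\bigr)=f\bigl((d_ja)m\bigr)-(d_ja)f(m),
\]
and this vanishes modulo the lower components, because $d_ja\in\theta(A)$ plus lower terms. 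The lower terms are handled by induction from $j=1$ upward, where no lower components exist.

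This is where torsionlessness is used. Since $N$ embeds in a product of copies of $\hat{A}$, and $d_j$ is a non-zero-divisor on $e_j\hat{A}$, multiplication by $d_j$ is injective on $e_jN$. Therefore $f(am)=af(m)$ for all $a$ in component $j$. Summing over $j$ shows that $f$ is $\hat{A}$-linear.

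The main obstacle is the bookkeeping of the lower components: $\theta(J_j)$ does not project only to the $j$-th component, so the argument has to be arranged triangularly. The plan is to first prove $\hat{A}$-linearity for the top-filtered pieces $\bigoplus_{i\le j}$ by induction on $j$, using the idempotency of $J_j/J_{j-1}$ to realise $e_j$ itself through products of elements of $\theta(J_j)$ after clearing denominators by $d_j$. If the direct triangular argument fails, the fallback is to localise at $d=\prod_j d_j$. Over $A[d^{-1}]$ we expect $\theta$ to become an isomorphism, by the same adjugate computation combined with idempotency. The claim then follows because torsionless modules embed into their localisations, and $A$-linear maps extend uniquely to these localisations.
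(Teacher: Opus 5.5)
There is a genuine gap at the step ``multiplication by $d_j$ is injective on $e_jN$. Therefore $f(am)=af(m)$''. Read $d_j$ as the element $d_je_j$; it commutes with $f$ by induction, since $d_je_j\in\theta(J_j)+\bigoplus_{i<j}\hat{A}_i$. Your computation then gives $d_je_j\bigl(f(am)-af(m)\bigr)=0$, and torsion-freeness of $e_jN$ yields only $e_jf(am)=af(m)$. The components $e_if(am)$ with $i<j$ do vanish by the induction hypothesis, since $e_if(am)=f(e_iam)=0$. Nothing in your argument controls the components with $i>j$, and this is exactly where idempotency is indispensable.

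Here is an example. $A=k[x]$ with cell chain $0\subset xk[x]\subset A$ is affine cellular with $B_1=k[x]$, $\psi^{(1)}=(x)$, $\hat{A}=k\times k[x]$ and $\theta(p)=(p(0),p)$; here $J_1$ is not idempotent. The map $f\colon \hat{A}_1=k[x]\to \hat{A}_2=k$, $p\mapsto p(0)$, is $A$-linear between projective $\hat{A}$-modules. It commutes with $d_1e_1=\theta(x)$, and every identity in your computation holds for it. Yet $f(e_1\cdot p)=p(0)\neq 0=e_1f(p)$ whenever $p(0)\neq 0$. So no argument that works by clearing denominators with $d_j$ can close the gap. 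Your localisation fallback fails for the same reason. Moreover, $d_j\in B_j$ is not an element of $A$ or $k$, so $A[d^{-1}]$ is not defined, and an $A$-linear map need not commute with $d$.

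The way you invoke idempotency is also incorrect. In the $j$-th component, the span of the products $\theta(x)a\theta(y)$ with $x,y\in J_j$ is $\hat{A}_j\psi^{(j)}\hat{A}_j\psi^{(j)}$, where $\hat{A}_j=M_{n_j}(B_j)$. Under idempotency this equals $\hat{A}_j\psi^{(j)}$, a left ideal that contains the identity only if $\psi^{(j)}$ is invertible. For $S_{\mathbb{Z}}(2,2)$, for instance, $\psi^{(1)}=\mathrm{diag}(1,2)$ is not invertible.

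The missing idea is the paper's Lemma~\ref{lem:idempotent}(1),(2). From $\hat{A}_j\psi^{(j)}\hat{A}_j=\hat{A}_j$, write $\mathbf{I}_{n_j}=\sum_t Y_t\psi^{(j)}Z_t$, that is, $e_j=\sum_t\theta(x_t)z_t$ with $x_t\in J'_j$ and $z_t\in\hat{A}_j$. Then $f(am)=\sum_t\theta(x_t)f(z_tam)$, and $e_i\theta(x_t)=0$ for $i>j$, so $e_if(am)=0$ for all $i>j$. With this added, your triangular induction is correct and is a genuine variant of the paper's proof.

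The paper does not induct. It splits $\Hom_A(F(X),F(Y))$ into blocks $\Hom_A(F(X_k),F(Y_l))$ and treats them as follows:
\begin{itemize}
\item $k<l$: killed by the idempotency argument just given.
\item $k>l$: killed because $\psi^{(l)}$ acts injectively on the torsionless $Y_l$ while $J'_l$ annihilates $X_k$ (Lemma~\ref{lem:torsionless}). In your scheme, this is what the cancellation of $d_j$ gives for $m\in e_iM$ with $i>j$.
\item $k=l$: handled again by idempotency, via $\mathbf{I}=\sum_i\tilde{y}_i\cdot z_i$. You handle this diagonal block instead with the adjugate and torsion-freeness, which is a perfectly good alternative.
\end{itemize}
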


Using $\theta$ yields a K-theoretic consequence. 

\begin{Theox}[Theorem \ref{TheoremCtext}]
\label{TheoremC}
Suppose that no determinant of a swich matrix $\psi^{(j)}$ (for $j = 1, \dots, n$)
is a zero-divisor and all cell subquotients are idempotent ideals in the
quotient algebras of $A$ modulo lower affine cell ideals. Then the
embedding $\theta: A \rightarrow \hat{A}$ is a weakly spectrum preserving
morphism in the sense of Baum and Nistor \cite{BN}.
\end{Theox}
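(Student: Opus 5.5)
We prove Theorem C by verifying the Baum--Nistor conditions for $\theta: A \to \hat{A}$ directly. A morphism $\phi: A\to B$ is \emph{weakly spectrum preserving} if two things hold. First, there are increasing filtrations by ideals $0=I_0\subset\dots\subset I_n=A$ and $0=\hat I_0\subset\dots\subset \hat I_n=B$ with $\phi(I_j)\subset \hat I_j$. Second, for every $j$ and every simple $\hat I_j/\hat I_{j-1}$-module $M$, the module $M$ restricted along $\phi$ is a simple $I_j/I_{j-1}$-module, and the induced map on primitive spectra is bijective. We take the cell chain $J_j$ on the $A$-side and the partial sums $\hat J_j=\bigoplus_{i\le j} M_{n_i}(B_i)$ on the $\hat{A}$-side, where $B_i$ is the affine commutative ring of the $i$-th cell.

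\textbf{Step 1: $\theta$ respects the filtrations.} By the construction in Theorem~\ref{TheoremA}, $\theta$ acts on $J_j/J_{j-1}$ through its endomorphism-ring action. Elements of $J_{j-1}$ annihilate $J_j/J_{j-1}$ and all higher subquotients. Hence, after ordering appropriately (or reversing indexing if the paper's $\hat{A}$-components are arranged that way), $\theta(J_j)$ lies in $\hat J_j$. Passing to the quotients, the induced map $J_j/J_{j-1}\to M_{n_j}(B_j)$ is the map sending a generalised matrix $x$ to $x\psi^{(j)}$ (or $\psi^{(j)}x$). The $B_j$-matrices are therefore multiplied by the swich matrix. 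Injectivity comes from $\det\psi^{(j)}$ not being a zero-divisor.

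\textbf{Step 2: simples restrict to simples.} Simple $M_{n_j}(B_j)$-modules are $k(\mathfrak m)^{n_j}$ for maximal ideals $\mathfrak m$ of $B_j$. Restricted to $J_j/J_{j-1}$, the action is $v\mapsto x\psi^{(j)}(\mathfrak m)v$.
\begin{itemize}
\item If $\psi^{(j)}(\mathfrak m)\neq 0$, the image is the full set of products $M_{n_j}(k(\mathfrak m))\psi^{(j)}(\mathfrak m)$, which acts irreducibly on $k(\mathfrak m)^{n_j}$ only if $\psi^{(j)}(\mathfrak m)$ is invertible.
\item This is where the idempotency hypothesis enters. $J_j/J_{j-1}$ being idempotent in $A/J_{j-1}$ means the ideal generated by the entries of $\psi^{(j)}$ is all of $B_j$; this is the standard criterion from \cite{KXaffine}. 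That alone does not give $\det\psi^{(j)}(\mathfrak m)\neq 0$ everywhere, however.
\item So I will use the combination of the hypotheses, or a result from the text, to show that the simple $J_j/J_{j-1}$-modules are parametrised by maximal ideals with $\psi^{(j)}(\mathfrak m)\ne0$. The cell module $\Delta(\mathfrak m)$ modulo its radical, whose rank is the rank of $\psi^{(j)}(\mathfrak m)$, then matches the restriction.
\end{itemize}

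\textbf{Main obstacle.} The restriction is simple exactly when that rank is full. If a rank drop can occur, I would first try to refine the filtration on $\hat{A}$: stratify $\operatorname{Spec} B_j$ by the closed set $V(\det\psi^{(j)})$. This set is a proper subset, since the determinant is not a zero-divisor, and the refinement places the degenerate locus in a separate layer. Iterating uses noetherian induction on the affine ring $B_j$. At each layer the restriction is again a matrix multiplication by a swich matrix of smaller generic rank, and idempotency keeps that matrix nonzero.

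\textbf{Step 3: bijectivity on primitive spectra.} Primitive ideals of $M_{n_j}(B_j)$ correspond to $\operatorname{MaxSpec}B_j$. By \cite{KXaffine}, primitive ideals of $A$ in layer $j$ correspond to maximal ideals of $B_j$ not containing all entries of $\psi^{(j)}$. Idempotency makes this the whole of $\operatorname{MaxSpec}B_j$, so the induced map is a bijection. Combining Steps 1–3 layer by layer gives that $\theta$ is weakly spectrum preserving.
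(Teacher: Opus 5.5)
There is a genuine gap, and it comes from the definition you start with. Baum and Nistor (see the definition quoted before Proposition~\ref{primitives}) do \emph{not} ask that restrictions of simple modules be simple. They ask only that, for each primitive ideal $P$ of a layer $I_j/I_{j-1}$ of $\hat A$, the preimage $\theta^{-1}(P)$ be contained in a \emph{unique} primitive ideal of $J_j/J_{j-1}$, and that the resulting map on primitive spectra be bijective.

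Your stronger condition actually fails under the hypotheses of Theorem~C, so your ``main obstacle'' is genuine for your definition. Take the paper's example $S_{\mathbb Z}(2,2)$. There $J_1$ is idempotent and $\psi^{(1)}=\mathrm{diag}(1,2)$ has determinant $2$, a non-zero-divisor. At $\mathfrak m=(2)$, the simple $M_2(\mathbb Z)$-module $\mathbb F_2^2$, restricted along $x\mapsto x\psi^{(1)}$, contains the proper non-zero submodule $\ker\psi^{(1)}(\mathfrak m)$, which is killed by everything.

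No refinement of filtrations can repair this. By (\ref{eq:bilinear-psi}), $\psi^{(j)}\Theta_j(a)=N_a\psi^{(j)}$ for every $a\in A$, where $N_a$ is the matrix of the right action of $a$ on $\Delta_j'$. So $\ker\psi^{(j)}(\mathfrak m)$ is stable under $\Theta_j(A)$. Hence, for any admissible pair of filtrations, the simple $\hat A$-module $k(\mathfrak m)^{d_j}$ restricts to a non-simple module over the matching $A$-layer whenever $\psi^{(j)}(\mathfrak m)$ is non-zero and singular. Given idempotency, this happens at every maximal ideal containing $\det\psi^{(j)}$, and such ideals exist unless the determinant is a unit. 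In particular your stratification by $V(\det\psi^{(j)})$ does not help: on the degenerate stratum the reduced swich matrix is singular at every point. You also never say how to refine the $A$-side chain compatibly.

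What is needed is the paper's route: the correct definition and no refinement. Keep the cell chain and the filtration of $\hat A$ by lower components (your Step~1 is fine). Then each layer map is, via $\alpha_j$, the map $\varphi\colon(M_d(B),\psi)\to M_d(B)$, $x\mapsto x\psi$, and idempotency gives $M_d(B)\psi M_d(B)=M_d(B)$. For a primitive, hence maximal, ideal $P=M_d(\mathfrak m)$, put $\varphi^*(P)=\{X:\psi X\psi\in P\}$. This ideal has the following properties:
\begin{itemize}
\item it is the annihilator of the simple module $V_\varphi/\ker\psi(\mathfrak m)$, which is non-zero precisely because idempotency forces $\psi(\mathfrak m)\neq 0$;
\item it contains $\varphi^{-1}(P)\supseteq P$;
\item every primitive ideal of the generalised matrix ring has this form (Proposition~\ref{primitives}, via a central-character and Nakayama argument);
\item $\varphi^*$ is injective because $M_d(B)\psi M_d(B)\psi M_d(B)=M_d(B)$.
\end{itemize}

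The decisive step missing from your proposal is uniqueness. If $\varphi^{-1}(P)\subseteq\varphi^*(P_1)$ with $P_1\neq P$, then $M_d(B)=P+P_1\subseteq\varphi^{-1}(P)+\varphi^{-1}(P_1)\subseteq\varphi^*(P_1)$, which is absurd. Your Step~3 points at the parametrisation by $\MaxSpec B_j$, but it establishes neither this uniqueness nor that the bijection is the one induced by $\theta^{-1}$. Finally, the determinant hypothesis serves only to make $\theta$ injective and plays no role in spectrum preservation, so $V(\det\psi^{(j)})$ is a red herring.
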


As a consequence, Baum and Nistor's results allow in particular to identify
periodic cyclic homology of the two algebras. 
\medskip

The cell modules of the affine cellular algebra $A$ are direct summands of the
regular $\hat{A}$-module. This is related with another feature of the
asymptotic algebra, and also of smaller analogues of $\hat{A}$. 

\begin{Theox}[Theorem \ref{TheoremDtext}]
\label{TheoremD}
Suppose that no determinant of a swich matrix $\psi^{(j)}$ (for $j = 1, \dots, n$)
is a zero-divisor and all cell subquotients are idempotent ideals in the
quotient algebras of $A$ modulo lower affine cell ideals.

(a) The asymptotic algebra $\hat{A}$ of $A$ can be constructed as
centraliser algebra of endomorphism rings of direct sums of a full set
of cell modules of $A$. In particular, there is
a double centraliser property between $A$ and its asymptotic algebra $\hat{A}$.

(b) The same properties are satisfied for all members of a family of
algebras also including $\hat{A}$ and constructed as centraliser algebras
of endomorphism rings of certain direct sums of cell modules of $A$. 
\end{Theox}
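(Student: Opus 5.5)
We prove Theorem D by working one layer of the cell chain at a time and then assembling the layers. Throughout, the setting is the one of \cite{KXaffine}. Each subquotient $J_j/J_{j-1}$ is isomorphic, as a non-unital algebra, to $V_j\otimes_k B_j\otimes_k V_j$ with multiplication deformed by $\psi^{(j)}$. The asymptotic algebra is
\[
\hat{A}=\bigoplus_j M_{n_j}(B_j),
\]
and the cell modules are $\Delta(j)=V_j\otimes_k B_j$, free $B_j$-modules of rank $n_j$, on which $A$ acts through $A/J_{j-1}$.

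\textbf{Step 1: endomorphism rings of cell modules.} We first show that $\End_A(\Delta(j))\cong B_j$, acting by scalars. An $A$-endomorphism $f$ of $\Delta(j)$ commutes in particular with the action of the layer $J_j/J_{j-1}$. Elements $a\otimes b\otimes c$ of this layer act by $x\mapsto a\cdot b\,\psi^{(j)}(c,x)$. Because $J_j/J_{j-1}$ is idempotent in $A/J_{j-1}$, the module $\Delta(j)$ is generated by the images $a\otimes b\,\psi^{(j)}(c,-)$. Because $\det\psi^{(j)}$ is not a zero-divisor, the matrix $\psi^{(j)}$ is injective on $B_j^{n_j}$. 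Combining the two facts, $f$ is determined by $f(a\otimes 1)$, and $B_j$-linearity forces $f$ to be multiplication by an element of $B_j$. The same argument shows $\Hom_A(\Delta(i),\Delta(j))=0$ for $i\neq j$. Here we compare annihilators: $J_{i}$ kills $\Delta(j)$ for $i<j$, while the idempotent layer $J_i/J_{i-1}$ acts nontrivially on $\Delta(i)$.

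\textbf{Step 2: the centraliser is $\hat{A}$.} Put $M=\bigoplus_j \Delta(j)^{m_j}$ with all $m_j\geq 1$. For $\hat{A}$ itself take $m_j=n_j$; other choices of the $m_j$ give the family in part (b). By Step 1,
\[
E=\End_A(M)\cong \prod_j M_{m_j}(B_j).
\]
Its centraliser $\End_E(M)$ is computed blockwise. On the summand $\Delta(j)^{m_j}\cong B_j^{n_j}\otimes B_j^{m_j}$, the centraliser of $M_{m_j}(B_j)$ is $M_{n_j}(B_j)$, by the standard Morita/double-centraliser fact for free modules over commutative rings. Hence
\[
\End_E(M)\cong\prod_j M_{n_j}(B_j)=\hat{A},
\]
independently of the $m_j$. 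For part (b) this yields the family of algebras obtained as centralisers, and these are Morita equivalent variants containing $\hat{A}$.

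\textbf{Step 3: the double centraliser property.} The action map $A\to\End_E(M)=\hat{A}$ coincides with the embedding $\theta$ of Theorem \ref{TheoremAtext}. Indeed, $\theta$ was built from the maps $A\to\End(J_j/J_{j-1})$, and $J_j/J_{j-1}\cong\Delta(j)^{n_j}$ as $A$-modules. So $A$ embeds in its bicommutant, and $\End_{\hat{A}}(M)$ recovers $E$ by the same blockwise computation. This gives the double centraliser relationship between $A$ and $\hat{A}$ through the bimodule $M$.

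\textbf{Main obstacle.} The crux is Step 1, namely that every $A$-endomorphism of $\Delta(j)$ is a $B_j$-scalar and that there are no maps between distinct cell modules. This requires care. Without idempotency of the layers, extra endomorphisms can appear. Without the non-zero-divisor hypothesis, one cannot cancel $\psi^{(j)}$. I would first try to establish $\Delta(j)=(J_j/J_{j-1})\Delta(j)$ and then show that $f$ commutes with the rank-one operators $a\otimes b\otimes c$. Cancelling $\det\psi^{(j)}$ via its adjugate then gives $f\in B_j$.
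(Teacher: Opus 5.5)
For part (a) your outline is the paper's proof. You show $\End_A(\Delta_j)\cong B_j$ and that there are no non-zero maps between distinct cell modules (Corollary \ref{regular}). Hence $T=\End_A(M)$ is a product, $\End(M_T)\cong\prod_j\End_{B_j}(\Delta_j)\cong\asym$, and the action map is $\Theta$.

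Your adjugate cancellation is a valid substitute for Lemma \ref{lemma12}, and it needs only that $\det\psi^{(j)}$ is a non-zero-divisor. Indeed, $f$ commutes with the action of $Y\,\mathrm{adj}(\psi^{(j)})$, which is $x\mapsto\det(\psi^{(j)})Yx$. Hence $\det\psi^{(j)}f(Yx)=\det\psi^{(j)}Yf(x)$, and you may cancel because $\det\psi^{(j)}$ acts injectively on $B_j^{n_j}$. This is what gives the $B_j$-linearity you invoke in Step 1; it is not available a priori. It also shows that your remark that extra endomorphisms appear without idempotency is wrong under the standing hypothesis. Your check that $\End_{\asym}(M)=E$ usefully makes the double centraliser explicit.

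Two points need correction.

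First, your justification of $\Hom_A(\Delta(i),\Delta(j))=0$ for $i\neq j$ relies on the wrong property in one direction. For $j>i$, the image of a map $\Delta(j)\to\Delta(i)$ lies in $\ann_{\Delta(i)}(J_i)$. This vanishes precisely because $\det\psi^{(i)}$ is not a zero-divisor (Lemma \ref{lemma1}, Theorem \ref{determinante}(2)). Idempotency of the layer, or non-trivial action, does not suffice: in Example \ref{example}, $J_1$ is idempotent and yet $\Hom_A(\Delta_2,\Delta_1)\neq 0$. For the other direction, $\Delta(i)\to\Delta(j)$ with $i<j$, you need the equality $J_i\Delta(i)=\Delta(i)$ coming from idempotency (Lemma \ref{lemma12}, Lemma \ref{lemma2}(2)), not mere non-trivial action. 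Both hypotheses are available, so this is repairable, but each one is needed for exactly one direction.

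Second, your treatment of (b) is vacuous. Your own Step 2 shows that $\End_E(M)\cong\asym$ for every choice of multiplicities $m_j\geq 1$. So varying the $m_j$ produces no new algebras; only $E=\prod_j M_{m_j}(B_j)$ changes, up to Morita equivalence. In particular $m_j=n_j$ is not needed for $\asym$; the paper takes $m_j=1$.

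The family intended in (b) is the ``smaller analogues of $\asym$'' of the introduction (cf.\ the Remark after Theorem \ref{TheoremAtext}). It arises by omitting cell modules. Take $M_S=\bigoplus_{i\in S}\Delta_i$ with $S\supseteq\{1,\dots,m\}$, where $J_m/J_{m-1}$ is a faithful $A/J_{m-1}$-module; the case $S=\{1,\dots,n\}$ gives $\asym$. Steps 1--2 then give $\End_{\End_A(M_S)}(M_S)\cong\prod_{i\in S}M_{d_i}(B_i)$, together with the reverse centraliser. However, injectivity of $A\to\prod_{i\in S}M_{d_i}(B_i)$ can no longer be read off from Theorem \ref{TheoremAtext}, so you must prove faithfulness of $M_S$ directly:
\begin{itemize}
\item If $aM_S=0$, then $a$ kills $J_m/J_{m-1}$, so $a\in J_{m-1}$.
\item For $i<m$, if $a\in J_i$ kills $J_i/J_{i-1}\cong\Delta_i^{d_i}$, then $a\in J_{i-1}$ by Lemma \ref{lemma1} applied in $A/J_{i-1}$.
\item Inductively, $a=0$.
\end{itemize}
This is the missing ingredient for (b).
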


This construction of $\hat{A}$ is quite different from the definition of
asymptotic algebras in \cite{KXaffine}.
\medskip

In the finite dimensional cellular case, homomorphisms between cell modules
rarely are injective, there are no non-zero injective endomorphisms apart
from automorphisms, and if there are no non-zero homomorphisms between
different cell modules at all, the cellular algebra must be semisimple. In
stark contrast to that, for KLR algebras of Dynkin type (shown to be graded
affine quasi-hereditary by Kleshchev, Loubert and Miemietz \cite{KLM}),
Kleshchev and Steinberg \cite{KS} have shown that non-zero endomorphisms
of cell modules exist and always are injective, but
there are no non-zero homomorphisms between different cell modules.
It turns out that from the affine cellular perspective this is not at all
surprising. 

\begin{Theox}[special cases of Theorem \ref{determinante} and Corollary
  \ref{regular}]
\label{TheoremE}
Suppose that subquotients in the affine cell chain are idempotent, their affine commutative rings are integral
domains and the determinants of the swich matrices are non-zero. Then
non-zero endomorphisms of cell modules always are injective and there are
no non-zero homomorphisms between non-isomorphic cell modules.
\end{Theox}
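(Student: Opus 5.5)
We work with the standard description of affine cellular algebras from \cite{KXaffine}. Fix a layer $j$ of the cell chain, with affine commutative ring $B=B_j$ (an integral domain by hypothesis) and free $B$-module $V=V_j$ of rank $m$. We identify $J_j/J_{j-1}$ with $V\otimes_k B\otimes_k V$. Multiplication is $(a\otimes b\otimes c)(a'\otimes b'\otimes c')=a\otimes b\,\varphi(c,a')\,b'\otimes c'$. In matrix language, $J_j/J_{j-1}\cong M_m(B)$ with product $X\cdot Y=X\psi Y$, where $\psi=\psi^{(j)}$. The cell modules attached to layer $j$ are $\Delta(j,\mathfrak p)$-type modules. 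For the statement we take the generic cell module $\Delta_j=V\otimes_k B\cong B^m$, viewed as column vectors. The action factors through $A/J_{j-1}$, and an element $X$ of the layer acts by $v\mapsto X\psi v$. The plan is to compute $\End_A(\Delta_j)$ and $\Hom_A(\Delta_i,\Delta_j)$ directly using the idempotence hypothesis.

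\textbf{Step 1: endomorphisms are multiplications by elements of the fraction field.} Idempotence of $J_j/J_{j-1}$ in $A/J_{j-1}$ means $\psi$ generates the unit ideal entrywise in a suitable sense. Concretely, $\Delta_j=(J_j/J_{j-1})\Delta_j$, so every $A$-endomorphism $f$ is determined by its commutation with the matrices $X\psi$. First, $f$ commutes with the $B$-action, since $B$ acts through the centre of the layer: $bX\psi v$ is again of the form $X'\psi v$. So $f$ is given by an $m\times m$ matrix $F$ over $B$. The relation $F X\psi=X\psi F$ for all $X\in M_m(B)$ then gives $F E_{kl}\psi=E_{kl}\psi F$ for all matrix units $E_{kl}$. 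Since $\det\psi\neq 0$, $\psi$ is invertible over the fraction field $K$ of $B$. Hence $F$ commutes with all of $M_m(K)$ and is a scalar $\lambda\in B$. A non-zero scalar acts injectively on $B^m$ because $B$ is a domain. This gives injectivity of non-zero endomorphisms, with $\End_A(\Delta_j)\cong B$.

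\textbf{Step 2: no maps between different layers.} Let $f:\Delta_i\to\Delta_j$ with $i\neq j$. If $i>j$ in the chain order, $J_j$ annihilates $\Delta_i$ modulo lower terms. Write $\Delta_j=J_j\Delta_j$, using idempotence. Then every $v\in\Delta_j$ is a sum of terms $x w$ with $x\in J_j$. If instead $i<j$, then $J_i$ kills $\Delta_j$ but not $\Delta_i$. By idempotence, $\Delta_i=J_i\Delta_i$, so $f(\Delta_i)=J_if(\Delta_i)\subseteq J_i\Delta_j=0$. If $i>j$, then $J_j\Delta_i$ lies in the image of $J_{i-1}$, which kills $\Delta_i$. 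Hence $J_j$ annihilates $f(\Delta_i)$. But the annihilator of $J_j$ in $\Delta_j$ is zero: if $X\psi v=0$ for all $X$, then $\psi v=0$, and so $v=0$ because $\det\psi$ is a non-zero-divisor. Thus $f=0$.

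\textbf{Step 3: specialised cell modules.} The paper's Theorem \ref{determinante} presumably handles the general cell modules, including those over quotients $B/\mathfrak p$, in the same way. There one replaces $B$ by the relevant domain and needs $\det\psi\notin\mathfrak p$; Corollary \ref{regular} gives the regular case. Non-isomorphic cell modules in the same layer that differ by the parameter are then handled like Step 1: a homomorphism is a $B$-linear scalar intertwining two different quotient actions, and it must vanish unless the parameters agree.

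\textbf{Main obstacle.} The main obstacle is Step 1's reduction. One must check that $B$-linearity of $f$ follows from $A$-linearity, which requires expressing the $B$-action through $A$ via idempotence. One must also check that the commutant computation survives passing to $K$. I would try first to write $1_B$-action as $\sum X_t\psi$ restricted to $\Delta_j$, using idempotence, so that the identity relation $\psi$-invertibility in $K$ closes the argument.
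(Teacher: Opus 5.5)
Your Steps 1 and 2 already give a complete proof, and Step 2 is the paper's own argument. For $i<j$ you combine $J_i\Delta_i=\Delta_i$ (from idempotence) with $J_i\Delta_j=0$, which is Lemma \ref{lemma2}(2). For $i>j$ you use $\ann_{\Delta_j}(J_j)=0$, i.e.\ (c)$\Rightarrow$(e) of Lemma \ref{lemma1}, together with Lemma \ref{lemma2}(1).

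Step 1 takes a different route from Lemma \ref{lemma12}. There, idempotence alone lets one write $E_{11}=\sum_i x_i.E_{i1}$ with $x_i\in J$. $A$-linearity then forces $f(E_{t1})=E_{t1}f(E_{11})$, so $f(E_{11})=b_1E_{11}$ and $f=\Phi(b_1)$. This gives $\End_A(\Delta_j)\cong B_j$ for arbitrary $B_j$ and $\psi^{(j)}$, and hence the sharper equivalence ``non-zero endomorphisms are injective iff $B_j$ is a domain''.

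Your commutant computation instead inverts $\psi^{(j)}$ over $\mathrm{Frac}(B_j)$. So it needs $B_j$ to be a domain and $\det\psi^{(j)}\neq0$, both of which are available here. In exchange it uses idempotence only for the $B_j$-linearity you list as the main obstacle. That step is already settled by your own remark: write $v=\sum_t X_t\psi w_t$, then $f(bv)=\sum_t (bX_t)\psi f(w_t)=bf(v)$. It can even be done without idempotence, by applying $f$ to $\det(\psi)\,bv=(b\,\mathrm{adj}(\psi))\psi v$ and cancelling the non-zero-divisor $\det\psi$. So your Step 1 is a determinant-based proof of $\End_A(\Delta_j)\cong B_j$. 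It corresponds to the second half of the alternative ``$J$ idempotent or $\det\psi$ not a zero-divisor'' quoted from \cite[Theorem 3.3]{CKLS} in Section 4, whereas Lemma \ref{lemma12} is the idempotent-based half.

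Step 3, however, should be deleted, for two reasons.
\begin{itemize}
\item It is unnecessary. In the paper a cell module is $\Delta_j=V_j\otimes_k B_j$ itself, one per layer. The non-isomorphic pairs in the statement are therefore $\Delta_i,\Delta_j$ with $i\neq j$, and Step 2 handles exactly these.
\item Its claim is false. Suppose $B_j$ is not a field and $\mathfrak m$ is a non-zero maximal ideal of $B_j$, even one with $\det\psi^{(j)}\notin\mathfrak m$. Then the canonical map $\Delta_j\to\Delta_j\otimes_{B_j}B_j/\mathfrak m$ is a non-zero, non-injective $A$-homomorphism between modules of the same layer with different parameters. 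These two modules are not isomorphic: an isomorphism would turn this map into a non-zero endomorphism of $\Delta_j$ with kernel $\mathfrak m\Delta_j\neq0$, contradicting Step 1.
\end{itemize}
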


In the final section of this article, we will be
turning to Lie theoretic examples, illustrating the situations occurring in the
new theory and also identifying our asymptotic algebras and the embedding
$\theta$ in various situations, including classical and affine Hecke
algebras studied by Lusztig \cite{Lusztig2}, (see also the book
\cite{GeckJacon}). We also give a worked out small example and
describe in detail the situation
of extended affine Hecke algebras of type $A$, which had been shown
to be affine cellular in \cite{KXaffine}.
Moreover we refer to  various useful methods to verify the assumptions (in
particular on the determinants of swich matrices) of our results above, for
instance a method developed by Mathas in \cite{Mathas} and using the existence
of Jucys-Murphy elements, which can be used to compute Gram determinants
(in our situation aka determinants of swich matrices), as well as $A$ and
$\hat{A}$ becoming isomorphic after extension of scalars (sometimes known as
Lusztig's isomorphism), and using properties of Kazhdan-Lusztig bases.

\section{Determinants for affine Cellular Algebras}
Throughout this text let $k$ denote a commutative Noetherian domain. We recall
the definition of an affine cell ideal of a $k$-algebra, introduced in
\cite{KXaffine}, and then collect basic properties of determinants of
swich matrices. These determinants will play crucial roles in assumptions and
proofs of main results.
\begin{definition} Let $A$ be a unitary $k$-algebra with a $k$-involution $i$ on $A$. A two-sided ideal $J$ in $A$ is called an \emph{affine cell ideal} if and only if $i(J)=J$, such that 
\begin{enumerate}
\item there exist a free $k$-module $V$ of finite rank and an affine commutative $k$-algebra $B$ with identity and with a $k$-involution $\sigma$ such that $\Delta := V \otimes_k B$ is an $A-B$-bimodule, where the right $B$-module structure is induced by that of the right regular $B$-module $B_B$;
\item there is an $A-A$-bimodule isomorphism $\alpha: J \to \Delta \otimes_B \Delta'$, where $\Delta' = B \otimes_k V$ is a $B-A$-
bimodule with the left $B$-structure induced by $_BB$ and with the right $A$-structure via $i$, that is, $(b \otimes v) a := \tau (i(a)(v \otimes b))$ for $a\in A, b\in B$ and $v\in V$, such that 
$\alpha(i(\alpha^{-1}( u \otimes b \otimes v))) = v \otimes \sigma(b) \otimes u$, where one identifies $\Delta\otimes_B\otimes \Delta'$ and $V\otimes_k B \otimes_k V$ and where $\tau$ denotes the flip map.
\end{enumerate}
The module $\Delta$ is called a \emph{cell module} for the affine cell ideal $J$. (In \cite{KXaffine} the notion \emph{cell lattice} is used.) 

The algebra $A$ (with the involution $i$) is called \emph{affine cellular} if and only if there is a $k$-module decomposition 
$A = J'_1 \oplus J'_2 \oplus \cdots \oplus J'_n$ (for some $n$) with $i(J'_j) = J'_j$ for each $j$ and such that setting 
$J_j = \bigoplus_{i=1}^j J'_i$ gives a chain of two-sided ideals of $A$: $0 = J_0 \subset J_1 \subset J_2 \subset \cdots \subset J_n = A$  and for each $1\leq j\leq n$ the quotient $J_j/J_{j-1}$ is an affine cell ideal of $A/J_{j-1}$ (with respect to the involution induced by $i$ on the quotient). This chain is called a \emph{cell chain} for the affine cellular algebra $A$.
\end{definition}

A cellular $k$-algebra is an affine cellular $k$-algebra where all the occurring $k$-algebras $B$ are equal to the given commutative ground ring $k$.
\medskip

Given an affine cell ideal $J$ of $A$, we frequently identify $J$ with $\Delta \otimes_B \Delta'$ via $\alpha$ and the latter with $V\otimes_k B \otimes_k V$. From \cite[Proposition 2.2]{KXaffine} we know that if $J$ is an affine cell ideal of $A$ with involution $i$, then there is a $k$-linear map $\psi : V\otimes_k V \to B$, such that $\sigma (\psi(v, v')) = \psi(v', v)$, and
    \begin{equation}
\alpha\left( \alpha^{-1}(u \otimes b \otimes v) \alpha^{-1}(u' \otimes b' \otimes v') \right) = u \otimes b\psi(v,u') b' \otimes v'        
    \end{equation}
for all $u, u' , v, v' \in V$ and  $b, b' \in B$; and for any element $a\in A$ and $u\otimes b \otimes v \in V\otimes B \otimes V$ we have
\begin{equation}
    a(u \otimes b \otimes v) \in  V \otimes_k  Bb  \otimes_k v, \quad \mbox{ and } \quad (u \otimes b \otimes v)a \in u \otimes_k bB \otimes_k V.
\end{equation}

In particular, fixing a basis $v_1, \ldots, v_n$ for the free $k$-module $V$, we obtain the matrix $\psi = \left( \psi(v_i,v_j)\right)_{i,j} \in M_n(B)$, which is $\sigma$-symmetric in the sense that $\psi = \sigma(\psi)^T$, where $T$ denotes the transposition of matrices and $\sigma$ is extended from $B$ to $M_n(B)$. 
Moreover, $V\otimes_k B\otimes_k V$ gets identified with $M_n(B)$ by sending $v_i\otimes b \otimes v_j$ to $bE_{ij}$, with $E_{ij} \in M_n(B)$ being the elementary matrix with $1$ in the $(i,j)$th entry and $0$ elsewhere. Hence the multiplication of $J$ induces a multiplication on $M_n(B)$ via the $k$-linear isomorphism $\alpha:J\to \Delta\otimes_B \Delta' \cong V\otimes_k B\otimes_k V \cong M_n(B)$, which is given by the matrix $\psi$, namely by
$M\cdot M' = M\psi M'$, for $M,M' \in M_n(B)$. The resulting (not necessarily unital) $k$-algebra will be denoted by $\widetilde{M_n(B)} = (M_n(B),\psi)$ and is called the \emph{generalised matrix ring}. The matrix $\psi$ is also called a \emph{swich} (or \emph{sandwich}) matrix. Thus $\psi$ encodes the multiplication of $J$ (seen as a non-unital ring), but does not have information about the $A$-action on $J$. We will refer to the quintuple $(B,\sigma, \Delta, \psi,\alpha)$ as the \emph{cell datum} of $J$.

Affine cell ideals were introduced to generalise cell ideals of cellular algebras and to provide a framework for studying affine versions of algebras like Hecke or Brauer algebras. A cell ideal of a $k$-algebra $A$ is an affine cell ideal $J$ with cell datum $(k,id, \Delta, \psi, \alpha)$, where $\Delta=V$ is a free $k$-module of rank $n$.  Following Graham and Lehrer \cite{GrahamLehrer}, see also Mathas \cite{Mathas}, a \emph{radical of $\Delta$} is defined as $\rad(\Delta)=\{ x\in \Delta \mid \psi(x,y)=0, \: \forall y\in \Delta\}$. (By definition, this is the
radical of the bilinear form associated with $\psi$. It is in general different
from the radical of the $A$-module ${}_A\Delta$.)

For an affine cell ideal let $\overline{\psi}:\Delta' \otimes_A \Delta \to B$
be given by 
\begin{equation}
    \overline{\psi}(b\otimes v, u\otimes b') := b\psi(v,u)b'
\end{equation}
for $b,b'\in B$ and $v,u\in V$. Then $\overline{\psi}$ is a $B$-bimodule homomorphism that satisfies 
\begin{equation}\label{eq:bilinear-psi}
    \overline{\psi}(y.a, x) = \overline{\psi}(y, a.x),
\end{equation}
for all $y\in \Delta'$, $x\in \Delta$ and $a\in A$.
We define the radical of $\Delta$ to be:
\begin{equation}\label{def:rad}
    \rad(\Delta)=\{ x\in \Delta \mid \overline{\psi}(y, x)=0, \: \forall y\in \Delta'\}.
\end{equation}
The subset $\rad(\Delta)$ is a left $A$-submodule of $\Delta$, since if $x\in \rad(\Delta)$ and $a\in A$, then by (\ref{eq:bilinear-psi}),  
$ \overline{\psi}(y, a.x)=\overline{\psi}(y.a, x)=0$, for any $y\in \Delta'$ and hence $a.x \in \rad(\Delta)$.

For a subset $J\subseteq A$ of a ring $A$ denote by $\ann_Y(J) = \{ y\in Y: Jy=0\}$ the annihilator of $J$ in a left $A$-module $Y$.

\begin{lemma}\label{lemma1}
    Let $\Delta$ be a cell module of an affine cell ideal $J$ with cell datum $(B,\sigma, \Delta, \alpha, \psi)$. Then the following statements are equivalent:
    \begin{enumerate}
        \item[(a)] $\rad(\Delta)=0$;
        \item[(b)] $\psi$ is not a zero-divisor in $M_n(B)$;
        \item[(c)] $\det(\psi)$ is not a zero-divisor in $B$;
        \item[(d)] $\ann_A(J) \cap J = 0$;
        \item[(e)] $\ann_{\Delta}(J)=0$;
        \item[(f)] the ring homomorphism $A\to A/J \times \End(J_A)$ with $a\mapsto (a+J, \lambda_a)$ is injective, where  $\lambda_a$ denotes the left $A$-action of $a$ on $J$.
    \end{enumerate}
\end{lemma}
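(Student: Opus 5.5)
We work throughout in coordinates. Identify $\Delta$ with $B^n$ (column vectors) via the basis $v_1,\dots,v_n$, $\Delta'$ with row vectors $B^{1\times n}$, and $J$ with $M_n(B)$ carrying the product $M\cdot M'=M\psi M'$. Then $\overline{\psi}(y,x)=y\psi x$ for a row $y$ and a column $x$. We prove the equivalences in two clusters: (a)$\Leftrightarrow$(b)$\Leftrightarrow$(c) is pure linear algebra over the commutative ring $B$, and (a)$\Leftrightarrow$(d)$\Leftrightarrow$(e)$\Leftrightarrow$(f) uses the structure of $J$.

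For the first cluster, note that $\rad(\Delta)=\{x\in B^n : y\psi x=0 \ \forall y\}$. Taking $y$ to be the unit rows $e_i^T$ shows $\rad(\Delta)=\{x:\psi x=0\}$. For (a)$\Leftrightarrow$(c) we invoke McCoy's theorem: a square matrix over a commutative ring has $\psi x=0$ for some $x\neq 0$ iff $\det\psi$ is a zero-divisor. If one prefers a self-contained argument, the direction (c)$\Rightarrow$(a) follows by multiplying by $\mathrm{adj}(\psi)$. For (b)$\Leftrightarrow$(c), suppose $\det\psi$ is a non-zero-divisor and $\psi M=0$ or $M\psi=0$; multiplying by the adjugate gives $\det(\psi)M=0$, hence $M=0$. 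Conversely, if $\psi x=0$ with $x\ne0$, the matrix with all columns equal to $x$ is killed by $\psi$ on the left. McCoy's theorem handles $\psi$ and $\psi^T$ symmetrically, so right zero-divisors are covered as well.

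For the second cluster we use that $J$ acts on $\Delta$ through the module structure. We have $J\Delta\subseteq\Delta$, and under $\alpha$ the element $u\otimes b\otimes v$ acts on $x\in\Delta$ by $x\mapsto u\,b\,\overline{\psi}(v,x)$. This follows from the multiplication formula in $J$, since $J\cong\Delta\otimes_B\Delta'$ as left $A$-modules and the action of $J$ on the first tensor factor is given via $\overline{\psi}$. In matrix terms, $M$ acts on the column $x$ by $M\psi x$.

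\begin{itemize}
\item (e)$\Leftrightarrow$(a): we get $\ann_\Delta(J)=\{x:M\psi x=0\ \forall M\}=\{x:\psi x=0\}=\rad(\Delta)$, using $M=E_{ii}$.
\item (d)$\Leftrightarrow$(b): an element $M\in J$ lies in $\ann_A(J)$ iff $M\psi M'=0$ for all $M'$, i.e. iff $M\psi=0$. Strictly, $\ann_A(J)$ is a left annihilator, so $aJ=0$. Thus $\ann_A(J)\cap J=0$ iff $\psi$ is not a left zero-divisor, which by the first cluster is equivalent to (b).
\item (f)$\Leftrightarrow$(d): the kernel of $a\mapsto(a+J,\lambda_a)$ consists of those $a\in J$ with $aJ=0$, which is exactly $\ann_A(J)\cap J$.
\end{itemize}

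The main obstacle is justifying the matrix description of the $A$-action. Specifically, we need that $\lambda_a$ for $a\in J$ coincides with left multiplication in $(M_n(B),\psi)$, and that the action of $J$ on $\Delta$ is given by $M\psi x$. The first is immediate from the cited formula for the product in $J$. For the second we use the bimodule isomorphism $\alpha$ together with $J\cdot(u\otimes b\otimes v)\subseteq V\otimes Bb\otimes v$: fixing $v$ with $\psi(v,\cdot)$ arbitrary realises $\Delta\otimes v$ as a left ideal-like copy of $\Delta$ inside $J$. The other delicate point is using a determinant criterion over a ring with zero-divisors, which is where McCoy's theorem is essential.
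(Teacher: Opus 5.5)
Your proof is correct, and it takes a somewhat different route from the paper's. The paper uses (d) as a hub and cites most of the links:
\begin{itemize}
\item (a)$\Leftrightarrow$(b) by the same column computation you use;
\item (b)$\Leftrightarrow$(c) by McCoy's theorem (via \cite{Brown}, Theorem 9.1);
\item (c)$\Leftrightarrow$(d) and (d)$\Leftrightarrow$(f) by quoting Lemma 3.2 and Proposition 3.14 of \cite{CKLS};
\item (d)$\Leftrightarrow$(e) from $J\cong\Delta^d$ as left $A$-modules.
\end{itemize}
You instead move every condition into the coordinate model $(M_n(B),\psi)$. The key fact is that $a\in J$ acts on the first column by $x\mapsto\alpha(a)\psi x$, which is exactly what the paper itself uses in the proof of Lemma \ref{lemma12}. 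Then (a), (d), (e) and (f) become statements about $\psi x=0$, $M\psi=0$ or $\psi M=0$, and McCoy's theorem together with the two-sided adjugate identity closes every loop. This makes the argument self-contained. It also gives the sharper fact that $\ann_\Delta(J)=\rad(\Delta)$ as sets, not just that one vanishes iff the other does. The paper's version is shorter, but it depends on the external results from \cite{CKLS}.

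There is one small convention slip. The paper defines $\ann_Y(J)=\{y\in Y: Jy=0\}$, so $\ann_A(J)\cap J$ corresponds to $\{M:\psi M=0\}$, not to $\{M: M\psi=0\}$ as you wrote. The kernel in (f), however, is $\{a\in J: aJ=0\}$, which corresponds to $\{M: M\psi=0\}$. So your claim that the kernel in (f) is ``exactly'' $\ann_A(J)\cap J$ does not hold literally under the paper's convention. The repair is one line, in either of two ways:
\begin{itemize}
\item You have already shown that both sets vanish iff $\det(\psi)$ is not a zero-divisor, using the adjugate on both sides and McCoy for $\psi$ and $\psi^T$.
\item Alternatively, the involution $i$ preserves $J$ and reverses products, so it interchanges $\{a\in J: Ja=0\}$ and $\{a\in J: aJ=0\}$.
\end{itemize}
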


\begin{proof}
Fix a basis $v_1, \ldots, v_n$ of $V$.

$(a) \Leftrightarrow (b)$: 
Let $x=\sum_{j=1}^n v_j \otimes b_j \in \Delta$. Then for any $1\leq i\leq n$: 
\begin{equation}\label{eq:radical}
 \overline{\psi}(v_i, x) =  \sum_{i=1}^n \psi(v_i, v_j)b_j = \sum_{j=1}^n \psi_{ij}b_j.
\end{equation} 
Hence, $\psi$ is a zero-divisor in $M_n(B)$ if and only if there exists a non-zero matrix $M = \sum_{r,s=1}^n b_{rs} E_{rs}$, such that 
\begin{equation}
\psi M     
= \sum_{i,j,s=1}^n \psi_{ij}b_{js} E_{is}
= \sum_{i,s=1}^n \overline{\psi}\left(v_i, \sum_{j=1}^n v_j\otimes b_{js} \right) E_{is}=0 
\end{equation}
if and only if $\sum_{j=1}^n v_j\otimes b_{js} \in \rad(\Delta)$, for all $1\leq s \leq n$ with at least one $b_{js}$ being non-zero.

$(b)\Leftrightarrow (c)$ follows from \cite[Theorem 9.1]{Brown}; $(c)\Leftrightarrow (d)$ follows from \cite[Lemma 3.2]{CKLS}; 
$(d)\Leftrightarrow (e)$ follows from the fact that $J\cong \Delta^d$, for some $d>0$;
$(d)\Leftrightarrow (f)$ follows from \cite[Proposition 3.14]{CKLS}.
\end{proof}

Let $A$ be an affine cellular $k$-algebra with cell chain
$0 = J_0 \subset J_1 \subset J_2 \subset \cdots \subset J_n = A$. 
Let $J_i/J_{i-1}\cong (M_{d_i}(B_i),\psi^{(i)})$. Recall that in the case of $A$ being a cellular algebra,  $B_i=k$, for all $i$. Denote by $\Delta_i$ the cell module associated to the cell ideal $J_i/J_{i-1}$. In the original definition of cellular algebras, Graham and Lehrer called a $k$-algebra cellular if there exists a \emph{cellular basis}, $\{ a_{st}^i : 1\leq i\leq n, \: 1\leq s,t\leq d_i\}$ satisfying certain axioms (see \cite{GrahamLehrer}). In particular,  $A=\bigoplus_{i=1}^n J'_i$, with $J'_i$ being the free $k$-module with basis $\{a_{st}^i : 1\leq s,t \leq d_i\}$. Moreover, $\Delta_i=V_i$ is the free $k$-module generated by $\{ a_{s1}^i : 1\leq s \leq d_i\}$, which is a left $A$-submodule of $J_i/J_{i-1}$. The bilinear form $\psi^{(i)}:\Delta_i \times \Delta_i \to k$ is determined by the matrix $\left( \psi^{(i)}(a_{s1}^i, a_{t1}^i) \right)_{1\leq s,t\leq d_i}$. Its determinant is called the \emph{Gram determinant} of $\psi^{(i)}$ and denoted by $G(i) = \det\left( \psi^{(i)}(a_{s1}^i, a_{t1}^i) \right)_{1\leq s,t\leq d_i}$. By construction, the Gram determinant coincides with the determinant
of the swich matrix.

\section{Homomorphisms between affine cell ideals.}
The purpose of this section is to show that for affine cellular algebras, whose determinants are not zero-divisors and whose cell ideals are idempotent, there are no non-zero homomorphisms between different cell modules. In particular,
Theorem \ref{TheoremE} will be established. At the end of the section, two
explicit examples of affine cellular algebras will be discussed in detail.

First, some motivation for the results to come will be explained. Let $A$ be  a cellular algebra  over an integral domain $k$ and cell modules $\Delta_i$ and matrices $\psi^{(i)}$, for $1\leq i \leq n$.  Graham and Lehrer showed in \cite[Proposition 2.6]{GrahamLehrer} that $\Hom_A(\Delta_i, \Delta_j)=0$, for $i<j$ and $\psi^{(i)}\neq 0$. However, if $k$ is a field and $\Hom_A(\Delta_i, \Delta_j)
=0$, for all $i \neq j$, the cellular algebra $A$ is semisimple. For affine
cellular algebras we will show that $\Hom_A(\Delta_i, \Delta_j)=0$, for $i<j$
is true under the assumption of the affine cell ideals being idempotent.
Moreover, we will see that
$\Hom_A(\Delta_i, \Delta_j)=0$, for all $i \neq j$, holds true under
assumptions guaranteeing that there is an embedding of $A$ into its
asymptotic algebra $\hat{A}$ (see Theorem \ref{TheoremA}).

Affine quasi-hereditary algebras, defined by Kleshchev \cite{Kleshchev},
are graded algebras satisfying conditions like affine cellular algebras
and in addition further conditions including idempotent generation of the
affine cell ideals. In \cite{KS} the homomorphisms between cell
modules have been investigated for a particular class of affine
quasi-hereditary algebras, KLR-algebras of finite type. Extending work in
\cite{KLM}, Kleshchev and Loubert showed in  \cite{KL} that KLR-algebras $R_{\alpha}$ of finite type are (graded) affine quasi-hereditary over $\ZZ$. For these algebras the affine cell ideals are generated by idempotents (see \cite[Section 4]{KL}).
More precisely, the affine cell chain associated to $R_{\alpha}$ is a chain of ideals $\{I_\pi \mid \pi \in \Pi(\alpha)\}$ indexed by the set of root partitions $\Pi(\alpha)$ of $\alpha$. For each such $\pi \in \Pi$, Kleshchev and Loubert showed that there exist idempotents $\overline{e}_\pi \in \overline{R}_\alpha = R_\alpha/I_{>\pi}$, such that the multiplication         
$$\overline{R}_\alpha \overline{e}_\pi \otimes_{\overline{e}_\pi \overline{R}_\alpha\overline{e}_\pi}  \overline{e}_\pi \overline{R}_\alpha \longrightarrow  \overline{R}_\alpha\overline{e}_\pi \overline{R}_\alpha = \overline{I}_\pi$$
is an isomorphism of $R_{\alpha}$-bimodules. The ideals $\overline{I}_\pi$  are idempotent and finitely generated projective as left (or right) $\overline{R}_\alpha$-modules. Furthermore, the algebras $B_\pi ={\overline{e}_\pi \overline{R}_\alpha\overline{e}_\pi}$ are polynomial rings in finitely many variables over the ground ring, which is supposed to be $\ZZ$ or a field. Hence, the commutative affine algebras $B_\pi$ are integral domains. The cell modules $\Delta_\pi$ associated to the affine cell ideal $I_\pi$ are the left $R_\alpha$-modules $\Delta_\pi = \overline{R}_\alpha \overline{e}_\pi$. (In \cite{KS} the notion \emph{standard module} is used.) 

Kleshchev and Steinberg proved in \cite{KS} that there are no non-zero homomorphisms between different cell (standard) modules $\Delta_\pi$ of a KLR algebra $R_\alpha$ (\cite[Theorem A]{KS}) and that any non-zero endomorphism of $\Delta_\pi$ is injective (\cite[Theorem B]{KS}).  This motivates considering general
(ungraded) affine cellular algebras and to understand when there are no
homomorphisms between the associated cell modules and when non-zero
endomorphisms of cell modules are injective.
\medskip

Let $k$ be a commutative ring, $A$ a $k$-algebra with $k$-linear involution $\imath$ and $J$ an affine cell ideal of $A$. Recall that there exists a finitely generated free $k$-module $V$ of rank $d$ and an affine commutative $k$-algebra  $B$ such that $\Delta=V\otimes_k B$ has a structure of an $A$-$B$-bimodule and there exists an isomorphism of 
$A$-bimodules $\alpha:J\rightarrow \Delta \otimes_B \Delta'$, where $\Delta' = B\otimes_k V$ is a right $A$-module by 
$(b\otimes v)a = \tau\left(\imath(a)(v\otimes b)\right)$.
As a left $A$-module, $J$ is isomorphic to a direct sum of $d$ copies of $\Delta$. Furthermore, the multiplication of $J$ carries over to $\Delta \otimes_B \Delta'$ and the latter can be identified with the generalised 
matrix ring $M_d(B)$ whose multiplication is deformed by a swich matrix $\psi$. More precisely, if $v_1, \ldots, v_d$ is an 
$k$-basis of $V$, then any element of the form 
$v_i\otimes_k b \otimes_B 1\otimes_k v_j$ can be identified with $bE_{ij}$ where $E_{ij}$ denote the elementary matrix 
units. Then $\alpha:J\rightarrow M_d(B)$ satisfies
\begin{equation}\label{eq2} \alpha(xy)=\alpha(x)\psi\alpha(y), \qquad \forall x,y \in J\end{equation}
We identify $\Delta = \Delta \otimes 1 \otimes v_1$ with the first column of $M_n(B)$, i.e. with $\bigoplus_{i=1}^n BE_{i1}$.

For convenience, we include the proof of \cite{CKLS}*{3.3} here. 

\begin{lemma}\label{lemma12} Let $J$ be an affine cell ideal of $A$ with cell datum $(B, \Delta,  d, \psi, \alpha)$. If $J$ is idempotent, then 
$J\Delta=\Delta$ and
\begin{enumerate}
    \item $\Phi:B\to \mathrm{End}({_A\Delta})$ with $b\mapsto \Phi(b):[E_{i1}\to bE_{i1}]$ is an isomorphism of rings.
    \item Every non-zero endomorphism of $\Delta$ is injective if and only if $B$ is an integral domain.
\end{enumerate}
\end{lemma}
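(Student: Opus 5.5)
I would work throughout in the matrix picture fixed above. Identify $J$ with $(M_d(B),\psi)$ via $\alpha$, and identify $\Delta$ with the first column $\bigoplus_i BE_{i1}$. Recall that $a\in A$ acts on $\Delta$ by left multiplication, and that this action commutes with the right $B$-action, since $\Delta$ is an $A$-$B$-bimodule. Elements $x\in J$ act on $\Delta$ by $x\cdot m=\alpha(x)\psi m$, by (\ref{eq2}).

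\emph{Step 1: $J\Delta=\Delta$.} Since $J=J^2$ and $J\cong\Delta^{d}$ as left modules, with $\Delta$ appearing as the first column, we get
\[
\Delta\cong J E\text{-column}\subseteq J\cdot J\text{-column}.
\]
More concretely, $\Delta = \Delta\otimes 1\otimes v_1$. Each element of $J$ is a sum of products $xy$ with $x,y\in J$, so the first-column part of $J=J^2$ is $J$ applied to first-column elements. That is, every element of $\Delta$ has the form $\sum x_k m_k$ with $x_k\in J$ and $m_k\in\Delta$. This uses only that left multiplication by $J$ preserves columns, which is (2).

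\emph{Step 2: $\Phi$ is an isomorphism.} Multiplication by $b$ on the right is $A$-linear, and $\Phi$ is a ring map because $B$ is commutative. For injectivity, $\Delta$ is free over $B$, so $bE_{11}=0$ forces $b=0$.

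For surjectivity, take $f\in\End({}_A\Delta)$ and put $f(E_{11})=\sum_i c_iE_{i1}$. For $x\in J$ with $\alpha(x)=E_{j1}$ we have
\[
f(x\cdot m)=x\cdot f(m),\qquad\text{i.e.}\qquad f(E_{j1}\psi m)=E_{j1}\psi f(m).
\]
Now use $J\Delta=\Delta$ and write $E_{11}=\sum_k x_k m_k$. Then $f(E_{11})=\sum_k x_k f(m_k)$, and each $x_k f(m_k)$ lies in the $B$-span of the columns $\alpha(x_k)\psi$. I expect this to be the main obstacle: one must show that $f$ is right multiplication by a scalar.

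My plan for that is to compare $f$ with the $A$-action on all of $J$. Define $F\colon J\to J$ by applying $f$ columnwise, via $J\cong\Delta^d$. Then $F$ is a left $A$-module endomorphism, and for $y\in J$ we have $F(y)=F(y')$-type relations. The key computation, for $x,y\in J$, is
\[
f(\alpha(x)\psi m)=\alpha(x)\psi f(m).
\]
Take $m=E_{11}$ and write $f(E_{11})=c$ for the column vector. Then $f(ME_{11})$, where $M=\alpha(x)\psi$, equals $Mc$. This holds for all $M$ in the set $\mathcal M=\{\alpha(x)\psi\}=M_d(B)\psi$.

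By Step 1, the first columns of matrices in $M_d(B)\psi M_d(B)$ span all of $\Delta$. So there are $M_k\in M_d(B)\psi$ and column vectors $m_k$ with $\sum_k M_k m_k=e_1$. Equivalently, the row space of $\psi$ generates, giving $N\psi=$ something with unit relations. This idempotence forces the entries of $\psi$ to generate $B$ as an ideal.

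I then use $f(e_j\,\psi_{1\ast}\,e_1\text{-stuff})$ to show $c_i=0$ for $i\ne1$. Apply $A$-linearity with $x$ chosen so that $\alpha(x)=E_{ij}$. Then
\[
f(E_{ij}\psi E_{11})=E_{ij}\psi c,
\]
that is, $f(\psi_{j1}E_{i1})=\bigl(\sum_l\psi_{jl}c_l\bigr)E_{i1}$. Extend $B$-linearly and set $\beta:=c_1$. Comparing these identities across all $i,j$ shows that $f(bE_{i1})=b\beta' E_{i1}$ on the $B$-submodule spanned by the entries of $\psi$. Since those entries generate $B$, $f=\Phi(\beta')$ on all of $\Delta$.

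\emph{Step 3: part (2).} By (1), every endomorphism has the form $\Phi(b)$. Since $\Delta\cong B^d$ is free over $B$, $\Phi(b)$ is injective exactly when $b$ is not a zero-divisor. Hence every non-zero endomorphism is injective if and only if every non-zero $b$ is a non-zero-divisor, i.e.\ if and only if $B$ is an integral domain.
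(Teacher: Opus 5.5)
Your Step 1 is fine in substance. The clean version is that taking first columns, $X\mapsto XE_{11}$, is a left $A$-linear map $\pi\colon J\to\Delta$, so $\Delta=\pi(J^2)=J\pi(J)=J\Delta$. Injectivity of $\Phi$ and Step 3 are also fine.

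The gap is in the surjectivity of $\Phi$. The identities your final comparison rests on all come from $A$-linearity at the single element $m=E_{11}$, namely $f(N\psi E_{11})=N\psi c$ for $N\in M_d(B)$. These identities only see $f$ on $M_d(B)\psi E_{11}=I_1\Delta$. Here $I_1$ is the ideal generated by the \emph{first column} of $\psi$, not by all entries of $\psi$. Idempotence of $J$ only says that all entries together generate $B$, and $I_1$ can be proper or even zero.

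Concretely, reorder the basis in Example~\ref{example} so that $\psi=\mathrm{diag}(0,1)$. Then $J$ is idempotent, but your identities read $0=f(0)=E_{ij}\psi c=\delta_{j2}c_2E_{i1}$. They give $c_2=0$ and no information at all about $f(E_{21})$. So ``comparing these identities across all $i,j$ shows $f(bE_{i1})=b\beta'E_{i1}$ on the submodule spanned by the entries of $\psi$'' does not follow. Likewise, the announced $c_i=0$ for $i\neq 1$ is never derived. Finally, ``extend $B$-linearly'' presupposes the $B$-linearity of $f$, which is part of what you must prove; this last point is repairable by taking $\alpha(x)=bE_{ij}$.

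The missing idea is to use $J$-linearity at all the $E_{i1}$ simultaneously, together with a factorisation of $E_{11}$ and right multiplication inside $M_d(B)$. By Step 1, choose $x_1,\dots,x_d\in J$ with $E_{11}=\sum_i\alpha(x_i)\psi E_{i1}$. For any $m\in\Delta$ (a matrix supported in the first column), $m=mE_{11}=\sum_i\alpha^{-1}(m\alpha(x_i)).E_{i1}$, hence
\[
f(m)=\sum_i m\alpha(x_i)\psi f(E_{i1})=m\,f\Bigl(\sum_i x_i.E_{i1}\Bigr)=m\,f(E_{11}).
\]
Taking $m=E_{11}$ gives $f(E_{11})=E_{11}f(E_{11})=c_1E_{11}$. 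Therefore $f(m)=c_1m$ for all $m\in\Delta$, i.e.\ $f=\Phi(c_1)$.

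This is the paper's argument. In effect it shows that $f$ commutes with the full action of $M_d(B)$ on columns, which is exactly what identities taken at the single vector $E_{11}$ cannot detect.
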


\begin{proof}
Let $\alpha: J\rightarrow M_d(B)$ denote the $A$-bimodule isomorphism and identify $\Delta$ with the first column of 
$M_d(B)$, i.e. $\Delta = M_d(B)E_{11} = \bigoplus_{i=1}^d BE_{i1}$.  In order to distinguish the left $A$-module action 
on $\Delta$ from ordinary matrix multiplication, we denote the left $A$-module action on $\Delta$ by $a.x = 
\alpha(a\alpha^{-1}(x))$, for $a\in A$ and $x\in \Delta$. If $a\in J$, then $ a.x = \alpha(a\alpha^{-1}(x)) = \alpha(a) 
\psi x$ by (\ref{eq2}).


Applying $\alpha$ to  $J^2=J$, we get $M_d(B)\psi M_d(B) = M_d(B)$. In particular $$J.\Delta = M_d(B)\psi \Delta = 
M_d(B)\psi M_d(B)E_{11} = M_d(B)E_{11} = \Delta.$$
The map $\Phi: B\rightarrow \mathrm{End}(_A\Delta)$ given by 
$$\Phi(b) : \left[ \sum_{i=1}^n a_iE_{i1}\rightarrow \sum_{i=1}^n a_ibE_{i1}\right],$$ for any $b\in B$, is an 
injective  ring homomorphism. In order to show that $\Phi$ is also surjective consider any left $A$-linear map $f:\Delta 
\rightarrow \Delta$.  Since $J.\Delta=\Delta$, there exist $x_1, \ldots, x_d \in J$ such that  $E_{11} = \sum_{i=1}^d 
x_i . E_{i1} = \sum_{i=1}^d \alpha(x_i) \psi E_{i1} .$
Thus for any $1\leq t\leq d$ we have in $\Delta$: 
$$E_{t1} = E_{t1}E_{11} 
=  \sum_{i=1}^d E_{t1}\alpha(x_i) \psi E_{i1}  
=  \sum_{i=1}^d \alpha^{-1}(E_{t1} \alpha(x_i)) . E_{i1} $$
Applying $f$ and using $A$-linearity yields:
\begin{equation}\label{eq1}f(E_{t1}) 
= \sum_{i=1}^d \alpha^{-1}(E_{t1} \alpha(x_i)) . f\left(E_{i1}\right)
= \sum_{i=1}^d E_{t1} \alpha(x_i) \psi f\left(E_{i1}\right)
= E_{t1}  f\left( \sum_{i=1}^d x_i .E_{i1}\right) = E_{t1} f(E_{11}).
\end{equation}
Suppose $f(E_{11})=\sum_{i=1}^d b_i E_{i1}$ for some $b_i \in B$. Then using equation (\ref{eq1}) for $t=1$ yields:
$$ f(E_{11})=E_{11}f(E_{11})=E_{11} \sum_{i=1}^d b_i E_{i1} = b_1 E_{11}.$$
Hence $f(E_{t1})=E_{t1}b_1 E_{11} = b_1 E_{t1}$, i.e. $f = \Phi(b_1)$, which shows that  $\Phi$ is an 
isomorphism of rings. Hence any endomorphism of $\Delta$ is of the form $\Phi(b)$ for some $b\in B$. In particular, an element $\sum_{i=1}^n a_i E_{i1}$ belongs to the kernel of $\Phi(b)$ if and only if $ba_i=0$ for all $i$. Therefore, $\Phi(b)$ is injective if and only if $b$ is not a zero-divisor. In other words, any non-zero endomorphism is injective if and only if no non-zero element is a zero-divisor, i.e. $B$ is an integral domain.
\end{proof}

The following elementary ring-theoretic Lemma yields information about non-trivial homomorphisms between a left $A$-module $Y$ and an $A/J$-module $X$.

\begin{lemma}\label{lemma2}
Let $A$ be any ring with ideal $J$ and left $A$-modules $X$ and $Y$. Suppose that $JX=0$.
\begin{enumerate}
    \item If $\ann_Y(J)=0$, then $\Hom( {_AX}, {_AY})=0$.
    \item If $JY=Y$, then $\Hom( {_AY}, {_AX})=0$.
    \item If $JY=Y$ and $\ann_Y(J)=0$, then 
    $\End({_AX\oplus Y}) \cong \End({_{A}X}) \times  \End({_AY})$
    as rings. In particular any endomorphism of $X\oplus Y$ is a sum of an endomorphism of $X$ and an endomorphism of 
$Y$, which act componentwise.
\end{enumerate}
\end{lemma}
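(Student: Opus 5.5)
The three parts are elementary and only use that $J$ kills $X$. I would prove them in order, since (3) is assembled from (1) and (2).

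For (1), let $f: X \to Y$ be $A$-linear and let $x \in X$. For every $j \in J$ we have $j f(x) = f(jx) = f(0) = 0$, because $JX = 0$. Hence $f(x) \in \ann_Y(J) = 0$, so $f = 0$.

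For (2), let $g: Y \to X$ be $A$-linear. Since $Y = JY$, every $y \in Y$ is a finite sum $\sum_t j_t y_t$ with $j_t \in J$ and $y_t \in Y$. Then
$$g(y) = \sum_t j_t\, g(y_t) \in JX = 0,$$
so $g = 0$.

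For (3), write an endomorphism $h$ of $X \oplus Y$ as a $2\times 2$ matrix
$$\begin{pmatrix} h_{XX} & h_{XY} \\ h_{YX} & h_{YY} \end{pmatrix},$$
where $h_{YX} \in \Hom({_AX}, {_AY})$ and $h_{XY} \in \Hom({_AY}, {_AX})$ are the off-diagonal components. Under the hypotheses $\ann_Y(J) = 0$ and $JY = Y$, part (1) gives $h_{YX} = 0$ and part (2) gives $h_{XY} = 0$. So every endomorphism is diagonal. The map
$$h \longmapsto (h_{XX}, h_{YY}) \in \End({_AX}) \times \End({_AY})$$
is therefore a bijection. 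It is additive, and it is multiplicative because composing diagonal matrices is done componentwise. This gives the ring isomorphism and the statement that every endomorphism acts componentwise.

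There is no real obstacle here. The only point needing care is the convention on composition order in the matrix description, and it does not matter because the matrices are diagonal.
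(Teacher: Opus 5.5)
Your proof is correct and follows essentially the same route as the paper: (1) and (2) are the paper's arguments, $Jf(X)=f(JX)=0$ and $f(Y)=f(JY)=Jf(Y)=0$, written out elementwise. For (3) you use the same decomposition of $\End(X\oplus Y)$ into four Hom-blocks, with the two off-diagonal blocks vanishing by (1) and (2).
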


\begin{proof}
(1) Let $f:X\rightarrow Y$ be a left $A$-linear map. Then, since $JX=0$, $Jf(X)=0$, i.e. $\mathrm{Im}(f) \subseteq \ann_Y(J)=0$ and therefore  $f=0$, i.e.  $\Hom(X,Y)=0$.

(2) Let $f:Y\rightarrow X$ be left $A$-linear, then $f(Y)=f(JY) = Jf(Y)=0$ as $JX=0$. Thus $\Hom(Y,X) = 0$.

(3) As $\End({_AX\oplus Y}) = \End({_AX}) \oplus  \Hom( {_AX}, {_AY}) \oplus  \Hom({_AY},{_AX}) \oplus \End({_AY})$ we have by $(1)$ and $(2)$
$\End({_AX\oplus Y}) = \End({_AX}) \times \End({_AY})$.
\end{proof}

As a consequence of the last two Lemmas we obtain:

\begin{theorem}\label{determinante}
Let $A$ be an affine cellular algebra with cell chain $0=J_0 \subset J_1 \subset \cdots \subset J_n=A$ and cell modules $\Delta_i$ for each $0\leq i\leq n$, i.e. 
$J_i/J_{i-1} \cong \Delta_i \otimes_{B_i}  \Delta'_i \cong \left( M_{d_i}(B_i), \psi^{(i)} \right)$ as $A$-bimodules and 
generalised matrix rings with swich matrix $\psi^{(i)}$.
For any $1\leq i \leq n$ the following statements hold.
\begin{enumerate}
    \item If $J_i/J_{i-1}$ is idempotent, then 
    \begin{enumerate}
        \item $\Hom_A(\Delta_i, \Delta_j)=0$, for all $j>i$.
        \item Every endomorphism of $\Delta_i$ is injective if and only if $B_i$ is an integral domain.
    \end{enumerate}
    \item $\Hom_A(\Delta_j,\Delta_i)=0$, for all $j>i$, if and only if $\det(\psi^{(i)})$ is not a zero-divisor in $B_i$.
\end{enumerate}
\end{theorem}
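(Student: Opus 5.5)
The plan is to work throughout in the quotient $\bar A = A/J_{i-1}$ with the ideal $J := J_i/J_{i-1}$, which is an affine cell ideal of $\bar A$ with cell module $\Delta_i$. Every $\Delta_j$ with $j \geq i$ is annihilated by $J_{i-1}$. Hence $\Hom_A(\Delta_j,\Delta_i) = \Hom_{\bar A}(\Delta_j,\Delta_i)$, and similarly in the other direction. The key observation is that for $j>i$ we have $J_i \subseteq J_{j-1}$, and $\Delta_j$ is an $A/J_{j-1}$-module, so $J\Delta_j = 0$. This puts us exactly in the situation of Lemma \ref{lemma2} with $X = \Delta_j$ and $Y = \Delta_i$.

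For (1)(a) I would use that $J$ is idempotent. Lemma \ref{lemma12} then gives $J\Delta_i = \Delta_i$, and Lemma \ref{lemma2}(2) immediately yields $\Hom_A(\Delta_i,\Delta_j)=0$. Part (1)(b) is literally Lemma \ref{lemma12}(2) applied to the affine cell ideal $J$ of $\bar A$.

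For (2), the direction ``$\Leftarrow$'' works as follows. If $\det(\psi^{(i)})$ is not a zero-divisor, Lemma \ref{lemma1} ((c)$\Rightarrow$(e)) gives $\ann_{\Delta_i}(J)=0$. Lemma \ref{lemma2}(1) then gives $\Hom_A(\Delta_j,\Delta_i)=0$ for all $j>i$.

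The converse is the main step; I would argue by contraposition. Suppose $\det(\psi^{(i)})$ is a zero-divisor. By Lemma \ref{lemma1}, $N := \ann_{\Delta_i}(J) \neq 0$; pick $0\neq x\in N$, so that $J_i x = 0$. Consider the chain $J_i \subset J_{i+1}\subset\cdots\subset J_n = A$. Since $1\in A$ gives $J_n x \neq 0$, there is a smallest $j>i$ with $J_j x\neq 0$, and for it $J_{j-1}x = 0$. Then the map $J_j \to \Delta_i$, $y\mapsto yx$, is left $A$-linear and nonzero. It factors through $J_j/J_{j-1}$, which as a left $A$-module is isomorphic to $\Delta_j^{d_j}$. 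Restricting to a suitable summand gives a nonzero homomorphism $\Delta_j\to\Delta_i$ with $j>i$.

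The only point needing care is this choice of $j$. One must check that the map really factors through $J_j/J_{j-1}$ and that the left module decomposition of the cell subquotient into copies of $\Delta_j$ is available, which follows from $\alpha$ being a bimodule isomorphism. Note also that the case $i=n$ is consistent: there $1\in J$ forces $\ann_{\Delta_n}(J)=0$, so $\det(\psi^{(n)})$ is automatically not a zero-divisor, matching the vacuous condition on $\Hom_A(\Delta_j,\Delta_i)$.
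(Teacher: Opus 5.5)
Your proposal is correct and follows the paper's proof essentially step by step. Part (1) combines Lemma \ref{lemma12} with Lemma \ref{lemma2}(2), the easy direction of (2) combines Lemma \ref{lemma1} with Lemma \ref{lemma2}(1), and the converse uses the same minimal-$j$ argument producing a nonzero map $J_j/J_{j-1}\to\Delta_i$, $y+J_{j-1}\mapsto y\delta$. Your explicit passage to $A/J_{i-1}$, the splitting $J_j/J_{j-1}\cong\Delta_j^{d_j}$, and the check of the case $i=n$ only make explicit what the paper leaves implicit.
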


\begin{proof}
Note that for $j>i$, the module $\Delta_j$ is an $A/J_i$-module, i.e. $J_i\Delta_j = 0$.

(1) If $J_i$ is idempotent, then by Lemma \ref{lemma12}, $J_i\Delta_i=\Delta_i$. By Lemma \ref{lemma2}(2),  $\Hom_A(\Delta_i,\Delta_j)=0$. The second part of (1)  follows from Lemma \ref{lemma12}.

(2) If $\det(\psi^{(i)})$ is not a zero-divisor in $B_i$, then $\Ann_{\Delta_i}(J_i)=0$, by Lemma \ref{lemma1} and by Lemma \ref{lemma2}(1), $\Hom_A(\Delta_j,\Delta_i)=0$ follows for all $j>i$.

Conversely, assume $\Hom_A(\Delta_j,\Delta_i)=0$ for all $j>i$.
Then $\Hom_A(J_j/J_{j-1},\Delta_i)=0$, for all $j>i$. 
If $\det(\psi^{(i)})$ is a zero-divisor, then by Lemma \ref{lemma1}, there is a non-zero $\delta\in \Ann_{\Delta_i}(J_i)$. In particular, $\delta \not\in \Ann_{\Delta_i}(J_n)$, since $1\in J_n=A$. Let $j>i$ be the least number, such that $\delta \not\in \Ann_{\Delta_i}(J_j)$. Then $\delta\in\Ann_{\Delta_i}(J_{j-1})$ by minimality of $j$, and $f\in \Hom_A(J_j/J_{j-1},\Delta_i)$, given by $f(x+J_{j-1})=x\delta$, for $x\in J_j$, is well-defined and non-zero, contradicting our initial assumption. Therefore, $\det(\psi^{(i)})$ is not a zero-divisor in $B_i$.
\end{proof}

Affine Temperley-Lieb algebras are examples of affine cellular algebras, where the affine commutative algebras $B_i$ are integral domains of the form $k[x^{\pm 1}]$ or $k[x]$. In the multiplication of diagrams, there occurs a parameter $q$ which is considered to be an indeterminate over the base ring $k$. The determinant of the appearing associated swich matrix $\psi^{(j)}$ for each of the cells is a polynomial in $q, x_j, x_j^{-1}$ and when considered as a polynomial in $q$ it is
monic, by \cite[Lemma 5.9]{CKLS}. Therefore it is not a zero-divisor. If one wishes to specialise $q$ to a certain value in $k$, then no determinants are zero-divisors for all but finitely many specialisations of $q$.
\medskip

The relation between the non-existence of non-zero homomorphisms between  different cell modules and determinants of the associated swich matrix of each cell being not  a zero-divisor is established in the next Corollary, which follows directly from Theorem \ref{determinante}.

\begin{corollary}\label{regular}
Let $A$ be an affine cellular algebra with cell chain $0=J_0 \subset J_1 \subset \cdots \subset J_n=A$ and cell modules $\Delta_k$ for each $1\leq k\leq n$, i.e. 
$J_k/J_{k-1} \cong \Delta_k \otimes_{B_k}  \Delta_k' \cong \left( M_{d_k}(B_k), \psi^{(k)} \right)$ as $A$-bimodules and 
generalised matrix rings with swich matrix $\psi^{(k)}$.

Then the following holds:
\begin{enumerate}
    \item $\Hom_A(\Delta_i,\Delta_j)=0$, for any $i\neq j$, if $J_i/J_{i-1}$ is idempotent and $\det(\psi^{(i)})$ is not a zero-divisor, for all $i$.
    \item For all $i$, every non-zero endomorphism of $\Delta_i$ is injective,
      provided $J_i/J_{i-1}$ is idempotent  and $B_i$ is an integral domain. 
\end{enumerate}
\end{corollary}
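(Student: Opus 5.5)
The corollary follows by applying Theorem \ref{determinante} to each index and then combining the two directions. Throughout we assume that for every $1\le k\le n$ the subquotient $J_k/J_{k-1}$ is idempotent and that $\det(\psi^{(k)})$ is not a zero-divisor in $B_k$.

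For part (1), fix $i\neq j$ and split into two cases.

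\emph{Case $i<j$.} Since $J_i/J_{i-1}$ is idempotent, Theorem \ref{determinante}(1)(a), applied to the index $i$, gives $\Hom_A(\Delta_i,\Delta_j)=0$.

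\emph{Case $i>j$.} Here the roles are reversed: with $j$ as the smaller index, we need $\Hom_A(\Delta_i,\Delta_j)=0$ for $i>j$. This is exactly the ``if'' direction of Theorem \ref{determinante}(2) applied to the index $j$, using that $\det(\psi^{(j)})$ is not a zero-divisor in $B_j$.

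Together the two cases cover all $i\neq j$. Note that the hypotheses are needed for all indices, not just for $i$: the idempotency condition is used at the smaller index in the first case, and the determinant condition at the smaller index in the second.

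For part (2), fix $i$ with $J_i/J_{i-1}$ idempotent and $B_i$ an integral domain. By Lemma \ref{lemma12} (equivalently Theorem \ref{determinante}(1)(b)), every endomorphism of $\Delta_i$ is of the form $\Phi(b)$ with $b\in B_i$. Such an endomorphism is injective exactly when $b$ is not a zero-divisor. In an integral domain every non-zero $b$ is a non-zero-divisor, so every non-zero endomorphism of $\Delta_i$ is injective.

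One point needs care. Theorem \ref{determinante} is stated for the ideals $J_i/J_{i-1}$ inside the quotient algebras $A/J_{i-1}$, while the cell modules $\Delta_i$ are regarded as $A$-modules. So Lemmas \ref{lemma1}, \ref{lemma12} and \ref{lemma2} should be applied over $A/J_{i-1}$. This is harmless, because $J_{i-1}$ annihilates every $\Delta_k$ with $k\ge i$. Hence $A$-linear maps between such modules coincide with $A/J_{i-1}$-linear maps, and the Hom-spaces agree.
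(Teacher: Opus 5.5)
Your proposal is correct and follows the paper's route: the corollary is read off directly from Theorem~\ref{determinante}, using (1)(a) at the smaller index when $i<j$, the ``if'' direction of (2) at the smaller index when $i>j$, and (1)(b), i.e.\ Lemma~\ref{lemma12}(2), for the injectivity of non-zero endomorphisms. Your remark about working over $A/J_{i-1}$ only makes explicit what the paper's proof of Theorem~\ref{determinante} already uses implicitly.
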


The motivating examples in this context are analogous results in \cite{KS}
for the class of KLR algebras of finite type,
which have been shown to be graded affine quasi-hereditary in \cites{KLM,KL,KS}.
Kleshchev and Steinberg proved in \cite{KS} that for KLR-algebras of
finite type $\Hom_A(\Delta_i,\Delta_j)$ vanishes  for any $i\neq j$ and that
for all $i$, every non-zero endomorphism of $\Delta_i$ is injective. Here,
$\Delta_i$ refers to graded standard modules in the graded quasi-hereditary
and hence also graded cellular structure. The methods used in \cite{KS} are
different from those used to obtain  Corollary \ref{regular} above. 

The graded standard modules in \cite{KS} are standard modules in the sense
of \cite{KXaffine}, hence (affine) cell modules. 
Since all affine commutative algebras $B_i$ of the affine cell chain
of a KLR algebra found in \cite{KL} are polynomial algebras and hence integral
domains, it follows from Theorem \ref{determinante}, that any non-zero
endomorphism of a cell ideal $\Delta_i$ is injective. This implies
\cite[Theorem B]{KS}.
\medskip

The following two explicit examples illustrate the situation and also provide
affine cellular algebras occurring in applications. 

\begin{example}
  The integral Schur algebra $A:=S_{\mathbb Z}(2,2)$ with $k=\mathbb{Z}$ can
  be represented as 
  $$A=\left\{  \left(\begin{array}{ccc} a & b & 0\\ 2c & d & 0 \\ 0 & 0 & e\end{array}\right) \in M_3(\ZZ)  \mid  c\in \ZZ,\:\: d = e (\mathrm{mod}\: 2)\right\}.$$ 
  (See \cite{Green} for details on Schur algebras, including multiplication
  formulae, structure and their role in representation theory of the
  algebraic groups $GL(n)$ and the symmetric groups $\Sigma_r$.)

Then $A$ is a $\ZZ$-algebra with involution $i$ given by $\left(\begin{array}{ccc} a & b & 0 \\ 2c & d & 0 \\ 0&0&e\end{array}\right)  \mapsto \left(\begin{array}{ccc} a & c & 0 \\ 2b & d & 0 \\ 0 & 0 & e\end{array}\right)$. The ideal 
    $J_1=  \left(\begin{array}{ccc} \ZZ & \ZZ & 0 \\ 2\ZZ & 2\ZZ &0 \\ 0 & 0 & 0 \end{array}\right)$ 
    satisfies $A/J_1\cong \ZZ$. Set $J_2=A$.
    Then $J_1$ is an affine cell ideal of $A$ with $B=\ZZ$, $\Delta_1=B^2$ and $A$-isomorphism $\alpha: J_1 \rightarrow \Delta_1\otimes_B (\Delta_1)' \cong (M_2(\ZZ),\psi^{(1)})$, given by $\left(\begin{array}{ccc} a & b & 0\\ 2c & 2d & 0 \\ 0 & 0 & 0\end{array}\right) \mapsto  \left(\begin{array}{cc} a & b \\ c & d \end{array}\right)$, with 
    $\psi^{(1)} = \left(\begin{array}{cc} 1 & 0 \\ 0 & 2\end{array}\right)$. The determinant of $\psi^{(1)}$ equals $2$, which is not a zero-divisor and $J_1$ is idempotent. The swich
    matrix $\psi^{(2)}$ is the identity matrix $(1)$.  

    The two cell modules $\Delta_1 \cong B^2$ and $\Delta_2 \cong B$ both have
    endomorphism ring $\ZZ$ and there are no non-zero homomorphisms between
    them. 
\end{example}
\begin{example}\label{example}
Let $k$ be a field and $A$ the $k$-algebra 
$$A  = \left\{\left(\begin{array}{ccc} x & a & b\\ 0 & c & d \\ 0 & 0 & x\end{array}\right) \mid x,a,b,c,d\in k\right\} \supset 
\left(\begin{array}{ccc} 0 & k & k\\ 0 & k & k \\ 0 & 0 & 0\end{array}\right)=:J_1.$$
When $k$ is an infinite field of characteristic two, $A$ is the Schur algebra
$S_k(2,2)$. Over a field of complex numbers, $A$ is a regular block, for
instance the principal block, of the Bernstein-Gelfand-Gelfand category
$\mathcal{O}$ of the semisimple complex Lie algebra
${\mathfrak{sl}}(2,{\mathbb{C}})$. Over any field, $A$ is the Auslander algebra
of the self-injective algebra $B=k[x]/(x^2)$, that is,
$A = \End_B(B \oplus k)$ with $B$ and $k$ representing the isomorphism
classes of indecomposable $B$-modules.

Let $i$ be the involution of $A$ defined by $\left(\begin{array}{ccc} x & a & b\\ 0 & c & d \\ 0 & 0 & x\end{array}\right) \mapsto \left(\begin{array}{ccc} x & d & b\\ 0 & c & a \\ 0 & 0 & x\end{array}\right)$. Then $i(J_1)=J_1$ holds and, as a two-sided ideal, $J_1$ is generated by the idempotent $e=E_{22}$, i.e. $J_1=A e A$. Let $\Delta_1 = A e$ and ${\Delta'}_1 = i(\Delta_1) = e A$. Thus the multiplication in $A$ yields an isomorphism $\Delta_1 \otimes_k {\Delta'}_1 \cong A e A = J_1$, by 
$$\left(\begin{array}{ccc} 0 & r & 0\\ 0 & s & 0 \\ 0 & 0 & 0\end{array}\right) \otimes \left(\begin{array}{ccc} 0 & 0 & 0\\ 0 & t & v \\ 0 & 0 & 0\end{array}\right) \mapsto  
\left(\begin{array}{ccc} 0 & rt & rv\\ 0 & st & sv \\ 0 & 0 & 0\end{array}\right).
$$
 Let $v_1, v_2$ be a basis of $\Delta_1$, where $v_1=e=E_{22}$  and $v_2 = E_{12}$. Then the basis of ${\Delta'}_1 = i(\Delta_1)$ is $w_1=i(v_1)=v_1$ and $w_2=i(v_2)=E_{23}$. Consider an arbitrary element $av_1\otimes w_1 + b v_1\otimes w_2 + cv_2\otimes w_1 + dv_2\otimes w_2$ of $\Delta_1 \otimes_k {\Delta'}_1$ as a matrix $\left( \begin{array}{cc}
a & b \\ c & d \end{array}\right)$. Then the inverse of the isomorphism between  $\Delta_1 \otimes_k {\Delta'}_1$ and $J_1$ is given by
$\alpha:J_1 \to \Delta_1 \otimes_k (\Delta_1)'$ sending
$$
\left( \begin{array}{ccc}
0 & a & b \\ 0 & c & d \\ 0 & 0 & 0
\end{array}\right)
\mapsto \left( \begin{array}{cc}
c & d \\ a & b 
\end{array}\right) $$
Moreover, $\alpha$ satisfies the compatibility condition with the involution:
$$\alpha \left( 
i\left( \begin{array}{ccc}
0 & a & b \\ 0 & c & d \\ 0 & 0 & 0
\end{array}\right)\right)
 = \left( 
\begin{array}{cc}
c & a \\  d & b 
\end{array}\right)
= \left( 
\begin{array}{cc}
c & d \\  a & b 
\end{array}\right)^T
= \left(\alpha \left( \begin{array}{ccc}
0 & a & b \\ 0 & c & d \\ 0 & 0 & 0
\end{array}\right)\right)^T
$$
This shows that $J_1$ is an affine cell ideal of $A$. The quotient $\Delta_2:=A/J_1\cong k$ is a $1$-dimensional affine cell ideal. Note that $\Delta_2 \to \Delta_1$ defined by $1+J_1 \mapsto E_{12}$ is a non-zero homomorphism, i.e. $\mathrm{Hom}_{A}(\Delta_2, \Delta_1)\neq 0$.
The  matrix $\psi^{(1)}$ can be deduced from the multiplication of $J$, since
$$\left(
\begin{array}{cc}
a & b \\  c & d
\end{array}
\right)
\psi 
\left(
\begin{array}{cc}
r & s \\  t & u
\end{array}
\right) = 
\alpha\left(
\alpha^{-1}\left(
\begin{array}{cc}
a & b \\  c & d
\end{array}
\right)
\alpha^{-1}\left(
\begin{array}{cc}
r & s \\  t & u
\end{array}
\right) \right)
=\left(
 \begin{array}{cc}
ar & as \\  cr & cs
\end{array}
\right),$$
for any $2\times 2$-matrices. Hence $\psi^ {(1)}=\left(
\begin{array}{cc}
1 & 0 \\  0 & 0
\end{array}
\right)$, which has determinant zero.
\end{example}

\section{The asymptotic algebra of an affine cellular algebra}

In this section, the asymptotic algebra of an affine cellular algebra
and the embedding of an affine cellular algebra into its asymptotic algebra
will be described in detail. Moreover, the asymptotic algebra will be
constructed as endomorphism ring in a double centraliser property.
This proves Theorems A and D. 

Let $A$ be an affine cellular algebra with cell chain 
\begin{equation}\label{notation:cellchain}
  0=J_0 \subset J_1 \subset \cdots \subset J_n=A,\end{equation} with cell data 
$(B_i, \sigma_i, \Delta_i, \psi^{(i)}, \alpha_i)$. 
In particular, 
\begin{equation}\label{notation:celldata}\alpha_i: J_i/J_{i-1} \to \Delta_i\otimes_ {B_i} \Delta_i' = (M_{d_i}(B_i),\psi^{(i)})\end{equation}
is an isomorphism of $A$-bimodules.

The \emph{asymptotic algebra} $\asym$ of an affine cellular $A$ has been
defined in \cite{KXaffine} as the direct product of all matrix algebras
$M_{d_i}(B_i)$ obtained from the cell data of the cell chain. In
\cite[comment before Lemma 5.5 on page 177]{KXaffine}, it has been mentioned
that the definition of the asymptotic algebra given in \cite{KXaffine}
coincides with Lusztig's asymptotic algebra in the case of the extended affine
Hecke algebra of type $A_{n-1}$. Further details will be given in the final
section when discussing examples and methods.


\begin{definition}
  The asymptotic algebra of $A$ is defined as 
$\asym = M_{d_n}(B_n) \times \cdots \times M_{d_1}(B_1)$. Set $\asym_i = M_{d_i}(B_i)$, for any $i$.
\end{definition}

For each $1\leq k \leq n$ denote the left multiplication of $A$ on $J_k/J_{k-1}$
by 
$\lambda_k: A \to \End( {J_k/J_{k-1}} )$, i.e. 
$$\lambda_k(a)=[x+J_{k-1} \mapsto ax + J_{k-1}],$$ for all $a\in A$ and $x\in J_k$. Note that $\lambda_k$ is right $A$-linear. Consider the ring homomorphism
\begin{equation}
    \lambda: A \to \End( {J_n/J_{n-1}} ) \times \End( {J_{n-1}/J_{n-2}} ) \times \cdots \times \End( {J_1} ), \qquad \lambda = (\lambda_n, \lambda_{n-1}, \cdots, \lambda_1)
\end{equation}

Due to \cite[Lemmas 4.2, 3.2]{CKLS}, the map $\lambda$ is injective if and only if no determinant $\det(\psi^{(k)})$ is a zero-divisor in $B_k$. By \cite[Theorem 3.3]{CKLS}, the ring $\End( {J_k/J_{k-1}})$ is isomorphic to $M_{d_k}(B_k)$ if $\det(\psi^{(k)})$ is not a zero-divisor in $B_k$ (or if  $J_k/J_{k-1}$ is idempotent).  We will briefly recall the steps of the proof of \cite[Theorem 3.3]{CKLS}. Let $J$ be an affine cell ideal with cell datum $(B,\sigma, \Delta, \psi,\alpha)$ with $n=\rank_B(\Delta)$. Assume that $J$ is idempotent or $\det(\psi)$ is not a zero-divisor in $B$.  Then in a first step one shows that the right $A$-linear endomorphisms of $J$ coincide with the right $J$-linear endomorphisms  of $J$, i.e. $\End(J_A)=\End(J_J)$. Moreover, the isomorphism $\alpha: J \to (M_n(B),\psi)=:\widetilde{M_n(B)}$ induces an isomorphism between $\End(J_J)$ and $\End({\widetilde{M_n(B)}_{\widetilde{M_n(B)}}})$ given by $f\mapsto \alpha f \alpha^{-1}$.  Finally one checks that the ring homomorphism $l:M_n(B)\to \End({\widetilde{M_n(B)}_{\widetilde{M_n(B)}}})$ given by left multiplication, $l(X)=[Y\mapsto XY]$, for any matrices $X,Y \in M_n(B)$ is invertible, with inverse given by $l^{-1}(g) = g(\idm)$, for any $g\in \End({\widetilde{M_n(B)}_{\widetilde{M_n(B)}}})$. Hence the isomorphism between $\End(J_A)$ and $M_n(B)$ is given by $f\mapsto \alpha(f(\alpha^{-1}(\idm)))$.

We summarise the situation as follows:
The isomorphism $$\alpha: J \to (M_n(B),\psi)= \widetilde{M_n(B)}$$
induces an isomorphism $\End(J_A) \cong  M_n(B), 
f  \mapsto  \alpha(f(\alpha^{-1}(\idm)))$,                       
                        which is the composition \medskip

\begin{center}
$\begin{array}{ccccc}
  \End(J_A) = \End(J_J) & \stackrel{\alpha}{\rightarrow} &
\End({\widetilde{M_n(B)}_{\widetilde{M_n(B)}}}) &
\stackrel{l^{-1}}{\rightarrow} & M_n(B) \\
f & \mapsto & \alpha f \alpha^{-1} & & \\
& & g & \mapsto & g(\idm)
 \end{array}$
 \end{center}

\medskip

\noindent For $1\leq k \leq n$ let $e_k = \alpha_k^{-1}(\mathbf{I}_{d_k})\in J_k/J_{k-1}$. Combining the left multiplication ${\lambda_k:A\to \End(J_k/J_{k-1})}$ with the isomorphism between $\End(J_k/J_{k-1})$ and $M_{d_k}(B_k)$ yields a ring homomorphism
$\Theta_k: A \to M_{d_k}(B_k)$ given by 
\begin{equation}\Theta_k(a)=\alpha_k(ae_k),\end{equation}
for all $a\in A$. Since the multiplication on $J_k/J_{k-1}$ is encoded in $\psi^{(k)}$, by (\ref{eq2}) any element $a\in J'_k$ satisfies:
\begin{equation}\label{eq:image-theta-k}\Theta_k(a)=\alpha_k(ae_k) = \alpha_k(\overline{a})\psi^{(k)},\end{equation}
where $\overline{a} = a + J_{k-1}$.

\begin{theorem}[Theorem \ref{TheoremA}] \label{TheoremAtext}
    Let $A$ be an affine cellular algebra such that no determinant $\det(\psi^{(k)})$ is a zero-divisor in $B_k$. Then $\Theta:A\to \asym$ given by $\Theta = (\Theta_n, \cdots, \Theta_1)$, is an embedding of $A$ into its asymptotic algebra. 
    More concretely,
\begin{equation}\label{notation:embedding}
    \Theta(a) = (\alpha_n(ae_n), \ldots, \alpha_1(ae_1)), \qquad a\in A.
\end{equation}
Let $1\leq k \leq n$, then for any $a\in J'_k$:
\begin{equation}
    \Theta(a) = (\alpha_n(ae_n), \cdots, \alpha_{k}(ae_{k}), \alpha_k(\overline{a})\psi^{(k)}, 0, \ldots, 0), 
\end{equation}
since $\Theta_l(a)=0$, for all $l<k$.
\end{theorem}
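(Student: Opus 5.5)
The proof splits into three parts: $\Theta$ is a ring homomorphism, $\Theta$ is injective, and the explicit formula holds. Each $\Theta_k$ is already a ring homomorphism. It is the composite of $\lambda_k: A\to \End(J_k/J_{k-1})$ with the isomorphism $\End((J_k/J_{k-1})_{A/J_{k-1}})\cong M_{d_k}(B_k)$, $f\mapsto \alpha_k(f(\alpha_k^{-1}(\mathbf{I}_{d_k})))$, recalled from \cite[Theorem 3.3]{CKLS}. That theorem applies to the affine cell ideal $J_k/J_{k-1}$ of $A/J_{k-1}$ because $\det(\psi^{(k)})$ is not a zero-divisor. Right $A$-linear endomorphisms of $J_k/J_{k-1}$ are the same as right $A/J_{k-1}$-linear ones, since $J_{k-1}$ acts trivially. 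Unwinding the composite gives $\Theta_k(a)=\alpha_k(\lambda_k(a)(e_k))=\alpha_k(ae_k)$, which is formula (\ref{notation:embedding}). Hence $\Theta=(\Theta_n,\dots,\Theta_1)$ is a ring homomorphism into $\asym$, and it is unital since each $\Theta_k(1)=\alpha_k(e_k)=\mathbf{I}_{d_k}$.

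For injectivity I would factor $\Theta$ as $\lambda$ followed by the product of the isomorphisms $\End(J_k/J_{k-1})\cong M_{d_k}(B_k)$. Then $\Theta$ is injective if and only if $\lambda$ is, and $\lambda$ is injective under the hypothesis by \cite[Lemmas 4.2, 3.2]{CKLS}. To make this self-contained, I would instead argue by induction along the cell chain using Lemma \ref{lemma1}(d). Suppose $\Theta(a)=0$ and $a\neq 0$. Let $k$ be minimal with $a\in J_k$. Then $\lambda_k(a)=0$, so $\bar a=a+J_{k-1}$ lies in $\ann_{A/J_{k-1}}(J_k/J_{k-1})\cap J_k/J_{k-1}$. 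By Lemma \ref{lemma1}, applied to the affine cell ideal $J_k/J_{k-1}$ of $A/J_{k-1}$, this intersection is $0$. Hence $a\in J_{k-1}$, contradicting minimality.

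The step needing care is the relation between $\lambda_k(a)=0$ and left annihilation. Here $\lambda_k(a)=0$ means $a(J_k/J_{k-1})=0$, which is exactly $\bar a\in\ann(J_k/J_{k-1})$. So the argument goes through, but it uses $a\in J_k$ for the intersection with $J_k/J_{k-1}$; that is precisely where the minimality of $k$ enters.

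For the concrete formula, take $a\in J'_k\subseteq J_k$. For $l<k$, $a$ acts on $J_l/J_{l-1}$ as an element of $J_k$. This action is not zero in general, so the argument needs care: the claim $\Theta_l(a)=0$ for $l<k$ must come from the filtration. Since $a\in J_k$ and $e_l\in J_l/J_{l-1}$ with $l<k$, we have $ae_l\in J_l$, which is not automatically in $J_{l-1}$. I would therefore read the statement as taking $\Theta_l(a)$ for $l<k$ to vanish by the paper's conventions on the decomposition $A=\bigoplus J'_i$, and check it against the identity $J_kJ_l\subseteq J_l$. Equivalently, the statement is arranged so that only the components $l\ge k$ are listed. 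The $k$-th component follows from (\ref{eq:image-theta-k}): $\alpha_k(ae_k)=\alpha_k(\bar a)\psi^{(k)}\alpha_k(e_k)=\alpha_k(\bar a)\psi^{(k)}$ by (\ref{eq2}), and the components $l>k$ are just the definition. I expect the vanishing of the components with $l<k$ to be the main obstacle to justify cleanly, and I would check it first on Example \ref{example}-type cases.
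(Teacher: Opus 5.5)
Your treatment of the embedding itself is correct and follows the paper's route. Each $\Theta_k$ is $\lambda_k$ followed by the ring isomorphism $f\mapsto\alpha_k(f(e_k))$ of \cite[Theorem 3.3]{CKLS}. Injectivity of $\lambda$ comes from Lemma \ref{lemma1}(d)/(f), applied to each $J_k/J_{k-1}\subseteq A/J_{k-1}$ and iterated along the cell chain; the paper cites \cite[Lemmas 4.2, 3.2]{CKLS} for this. Your computation $\Theta_k(a)=\alpha_k(\bar a)\psi^{(k)}\alpha_k(e_k)=\alpha_k(\bar a)\psi^{(k)}$ via (\ref{eq2}) is also right, and it holds for every $a\in J_k$.

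The gap is in the last part. You never prove that $\Theta_l(a)=0$ for $l<k$; you propose to accept it ``by the paper's conventions'', which is not an argument. No choice of the complements $J'_i$ can make it true, because $\Theta_l(a)$ depends only on how $a$ acts on $J_l/J_{l-1}$. Your suspicion was in fact correct: the claim as displayed is false. In the Hecke algebra of $\Sigma_3$ treated in the last section, $C_{s_1}\in J'_2$ has $\Theta(C_{s_1})=(0,\eta E_{11}+E_{12},\eta)$, so $\Theta_1(C_{s_1})=\eta\neq 0$. Likewise $C_{id}\in J'_3$ has $\Theta(C_{id})=(1,\mathbf{I}_2,1)$.

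The indices in the statement are reversed. What is true is that the components with $l>k$ vanish, and this is what the paper uses later: Lemma \ref{lem:idempotent}(2) uses $\Theta_l(J'_k)=0$ for $l>k$, and Lemma \ref{lem:torsionless} uses $\Theta_k(J'_l)=0$ for $l<k$. The proof is immediate. For $x\in J_l$ with $l>k$, you have $ax\in J_k\subseteq J_{l-1}$ because $J_k$ is a two-sided ideal. So $\lambda_l(a)=0$ and $\Theta_l(a)=\alpha_l(ae_l)=0$.

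So for $a\in J'_k$ (indeed for any $a\in J_k$) the correct formula is $\Theta(a)=(0,\ldots,0,\alpha_k(\bar a)\psi^{(k)},\alpha_{k-1}(ae_{k-1}),\ldots,\alpha_1(ae_1))$. Your remark that ``the components $l>k$ are just the definition'' has it backwards: those are exactly the components that vanish, because $J_kJ_l\subseteq J_k$. The inclusion $J_kJ_l\subseteq J_l$ that you planned to check is precisely why nothing forces the components with $l<k$ to vanish.
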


\begin{remark} 
  The same proof shows that there is the following embedding
  of $A$ into a more economic version of $\asym$: 
  
  Let $m$ be the least non-negative integer such that $J_m/J_{m-1}$ is a
  faithful left $A/J_{m-1}$-module. Then there is an embedding of $A$
  into $M_{d_m}(B_m) \times \cdots \times M_{d_1}(B_1)$.
  
 In particular, when the affine cell ideal $J_1$ is a faithful $A$-module, one
 may choose $m=1$ and get an embedding into the matrix algebra
 $M_{d_1}(B_1)$.

 We choose however not to take this abbreviation, for the following
 reason: By \cite{KXaffine}, simple $A$-modules are classified by
 subsets of the disjoint union of the maximal spectra of the matrix
 components of $\asym$. The abbreviation using $m < n$ would confuse this
 approach and Theorem \ref{TheoremC} could not be formulated any more.
\end{remark}



In the second part of this section we will show that the asymptotic algebra can also be constructed as the double centraliser of the direct sum of cell modules. 
Let $J$ be an affine cell ideal of $A$, with cell datum $(B,\sigma,\Delta, \psi,\alpha)$ and $n=\rank_B(\Delta)$. Let ${\alpha:J \to \Delta \otimes_B \Delta'}$ be the given $A$-bimodule isomorphism with $\Delta = V\otimes B$ an $A$-$B$-bimodule, $V$ free of rank $n$ over $k$ and $\Delta' = B\otimes V$ the induced $B$-$A$-bimodule. Then $J\cong \Delta^n$ as left $A$-module and $J \cong (\Delta')^n$ as right $A$-module. Moreover, ${_B\Delta} \cong  {_BB}^n \cong {\Delta'_B}$ as $B$-modules.

By Lemma \ref{lemma12} or \cite[Theorem 3.3]{CKLS},  $\End({_A\Delta}) \cong B$, if $J$ is idempotent or $\det(\psi)$ is not a zero-divisor in $B$. 
Similarly,  the map $\mathbf{l}_b:\Delta' \to \Delta'$ given by $\mathbf{l}_b(x)=bx$, for any $b\in B$ is  right $A$-linear and yields an injective ring homomorphism $\mathbf{l}:B\to \End({\Delta'_A})$. 
In case $J$ is idempotent or $\det(\psi)$ is not a zero-divisor in $B$, it can be shown as in Lemma \ref{lemma12} that the map $\mathbf{l}$ is an isomorphism.
Thus in this case we obtain the following chain of isomorphisms, using $J\cong (\Delta')^n$ as right $A$-modules:
\begin{equation}\label{eqn1}\End(J_A) \cong M_n(\End(\Delta'_A)) \cong  M_n(B) \cong \End(B^n_B) = \End(\Delta_B).\end{equation}

The left multiplication of $A$ on $J$, i.e. $\lambda(a)=:\lambda_a\in \End(J_A) \cong \End(\Delta_B)$ is defined by $\lambda_a(x)=ax$, for all $x\in J$. 
Identifying $J\cong  \Delta \otimes_B \Delta' \cong \Delta^n$, the left $A$-action $\lambda_a$ on $J$ and its restriction, say $\lambda'_a$, on $\Delta$ coincide. 
The map $\Theta:A\to A/J\times \End(J_A)$, with $\Theta(a) = (a+J, \lambda_a)$, for all $a\in A$, is a ring homomorphism. By Lemma \ref{lemma1}, $\Theta$ is injective if and only if $\det(\psi)$ is not a zero-divisor in $B$. Using equation (\ref{eqn1}) we obtain an injective ring homomorphism
$$\Theta:A \to A/J \times \End(\Delta_B),\qquad  a \mapsto (a+J, \lambda'_a),$$
with $\lambda'_a$ the left action of $A$ on $\Delta$. Repeating this process along a cell chain yields an embedding of rings:
$$\Theta:A \to \prod_{i=1}^n \End( (\Delta_i)_{B_i}), \qquad a \mapsto (\lambda'^{(n)}_a, \cdots , \lambda'^{(1)}_a),$$
where $\lambda'^{(i)}_a$ denotes the left action of $a\in A$ on $\Delta_i$.
By Corollary \ref{regular}, $\Hom_A(\Delta_i,\Delta_j)=0$ for $i\neq j$, if for all $i$ the ideal $J_i/J_{i-1}$ is idempotent and $\det(\psi^{(i)})$ is not a zero-divisor in $B_i$. Therefore, the faithful $A$-module $M=\bigoplus_{i=1}^n \Delta_i$ satisfies:
\begin{eqnarray}
\End_A(M)&=&\prod_{i=1}^n \End_A(\Delta_i)\\
 \End(M_{\End_A(M)}) &=& \prod_{i=1}^n \End( {\Delta_i}_{B_i})
\end{eqnarray}
Thus we have proven the following Theorem:

\begin{theorem}\label{TheoremDtext}
  Let $A$ be an affine cellular algebra with cell chain $0=J_0 \subset J_1 \subset \cdots \subset J_n=A$ and cell datum $(B_i,\sigma_i,\Delta_i, \psi^{(i)}, \alpha_i)$ and $n_i=\rank_{B_i}(\Delta_i)$, for all $i$.
If for all $i$, the ideal $J_i/J_{i-1}$ is idempotent and $\det(\psi^{(i)})$ is not a zero-divisor, then the following hold.
\begin{enumerate}
    \item $M=\bigoplus_{i=1}^n \Delta_i$ is a faithful left $A$-module.
    \item $T=\End_A(M)=\prod_{i=1}^n \End_A(\Delta_i)$.
    \item The left $A$-module action of $A$ on $M$ induces an embedding of rings
    $$\lambda: A \to \End(M_T) = \prod_{i=1}^n \End( {\Delta_i}_{B_i}) \cong M_{d_n}(B_n)\times \cdots \times M_{d_1}(B_1) = \asym,$$
    which coincides with the embedding of $A\to \asym$ as shown in \cite{CKLS}.
\end{enumerate} 
\end{theorem}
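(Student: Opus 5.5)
The plan is to assemble the three parts from Theorem \ref{TheoremAtext}, Corollary \ref{regular}, Lemma \ref{lemma12} and Lemma \ref{lemma1}, working cell by cell along the chain and then taking products.

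\textbf{Faithfulness of $M$ (part (1)).} I will show that $\Ann_A(M)=0$ by induction along the cell chain. Let $a\in A$ annihilate every $\Delta_i$. Suppose we already know $a\in J_k$ for some $k$; for $k=n$ this is trivial. Consider the image $\bar a$ of $a$ in $J_k/J_{k-1}$. Since $a\Delta_k=0$ and $J_k/J_{k-1}\cong\Delta_k^{d_k}$ as left $A$-modules, $\bar a$ annihilates $J_k/J_{k-1}$. Thus $\bar a$ lies in $\ann(J_k/J_{k-1})\cap J_k/J_{k-1}$ computed in $A/J_{k-1}$. By Lemma \ref{lemma1}(d), using that $\det(\psi^{(k)})$ is not a zero-divisor, this intersection is zero, so $a\in J_{k-1}$. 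Descending to $k=0$ gives $a=0$. Alternatively, faithfulness follows at once from the injectivity of $\Theta$ in Theorem \ref{TheoremAtext}, because $\Theta_k(a)$ only depends on the action of $a$ on $J_k/J_{k-1}\cong\Delta_k^{d_k}$.

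\textbf{Block decomposition of $T$ (part (2)).} Write $\End_A(M)=\bigoplus_{i,j}\Hom_A(\Delta_i,\Delta_j)$. By Corollary \ref{regular}(1), all off-diagonal terms vanish under the two hypotheses. Hence $T=\prod_i\End_A(\Delta_i)$, and Lemma \ref{lemma12}(1), which uses idempotence, identifies each factor with $B_i$ acting by right multiplication on $\Delta_i=V_i\otimes B_i$.

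\textbf{The double centraliser (part (3)).} Since $T=\prod_i B_i$ acts componentwise, with the $i$-th factor acting only on $\Delta_i$, a $T$-linear endomorphism of $M$ must commute with the central idempotents of $T$. It therefore preserves each summand $\Delta_i$ and is $B_i$-linear there. So
\[
\End(M_T)=\prod_i\End((\Delta_i)_{B_i})\cong\prod_i M_{d_i}(B_i),
\]
using that $\Delta_i\cong B_i^{d_i}$ is free over $B_i$. The left action of $A$ on $M$ gives $\lambda:A\to\End(M_T)$, which is injective by part (1).

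The step needing the most care is checking that $\lambda$ agrees with $\Theta$ of Theorem \ref{TheoremAtext} under these identifications. Restricting $\lambda_k(a)\in\End(J_k/J_{k-1})$ to the first column $\Delta_k$ and expressing it in the $B_k$-basis $E_{i1}$ gives some matrix. Under the chain (\ref{eqn1}), $\End(J_A)\cong M_n(B)$ sends $f\mapsto\alpha(f(\alpha^{-1}(\idm)))$. So I must verify that the matrix of $a$ acting on the columns $E_{i1}$ equals $\alpha_k(ae_k)$, possibly up to a transpose or the involution depending on conventions. Since $ae_k$ maps to $\alpha_k(ae_k)$, and $J_k/J_{k-1}\cong\Delta_k^{d_k}$ column-wise with $a$ acting identically on each column, multiplying $\alpha_k(ae_k)$ by $E_{i1}$ should give $a.E_{i1}$. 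I would check this on $e_k$ and then extend by right $A$-linearity, equivalently right matrix multiplication, to all columns.
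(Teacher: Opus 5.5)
Your proposal is correct and takes essentially the same route as the paper: descending induction along the cell chain via Lemma \ref{lemma1} for faithfulness, Corollary \ref{regular} to kill the off-diagonal Hom-spaces in $T$, the central idempotents of $T$ to split $\End(M_T)$ into $\prod_i\End((\Delta_i)_{B_i})$, and the column-wise identification $J_k/J_{k-1}\cong\Delta_k^{d_k}$ to match $\lambda$ with $\Theta$. In that last step no transpose or involution occurs: $A$ acts on $\Delta_k\otimes_{B_k}\Delta_k'$ only through the first tensor factor, so $a.X=\alpha_k(ae_k)X$ for every matrix $X$, and in particular $a.E_{i1}=\alpha_k(ae_k)E_{i1}$ (it is this commuting with right matrix multiplication, rather than right $A$-linearity as such, that cleanly justifies the extension from $e_k$ to all columns).
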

\begin{proof}
Note that $M$ is a faithful left $A$-module, because if $aM=0$, then $a\Delta_i=0$, for all $i$. Hence, $a(J_i/J_{i-1})=0$, for all $i$. Thus $a + J_{i-1}\in \ann_{J_{i-1}/J_{i-2}}(J_{i-1}/J_{i-2})$. As $\mathrm{det}(\psi^{(i-1)})$ are not
zero-divisors by assumption,  $a\in J_{i-1}$, for all $i$. Thus $a\in J_{0}=0$. In particular, the left action of $A$ on $M$ yields an embedding of $A$ into $\End(M_T)$, where $T=\End(_AM)$. By the previous argument, $\End(M_T)\cong \prod_{i=1}^n \End({\Delta_i}_{B_i}) \cong \asym$.
\end{proof}


\section{Properties of the embedding into the asymptotic algebra}

In this section we will provide further information related with the embedding
into the asymptotic algebra by considering the restriction functor associated to the
embedding $A\to \asym$. Moreover we will derive a geometric property of
the embedding. This enhances in particular Theorems \ref{TheoremB} and
\ref{TheoremC}.

Throughout this section, $A$ is an affine cellular algebra with cell chain $\{J_i : 1\leq i \leq n\}$ as in  (\ref{notation:cellchain}) and cell data $(B_i,\sigma_i,\Delta_i, \psi^{(i)}, \alpha_i)$ as in (\ref{notation:celldata}), with $d_i=\rank_{B_i}(\Delta_i)$, such that no determinant $\det(\psi^{(i)})$ is a zero-divisor in $B_i$ and such that the embedding $\Theta: A \to \asym$ into its asymptotic algebra $\asym=\asym_n \times \cdots \times \asym_1$, with $\asym_k=M_{d_k}(B_k)$, is given by (\ref{notation:embedding}), i.e. $\Theta(a)=(\Theta_n(a),\ldots, \Theta_1(a))$, for all $a\in A$. Then, for $a\in J_k'$, we have $\Theta_k =  \alpha_k(\overline{a})\psi^{(k)}$, where $\overline{a}=a+J_{k-1}$. The embedding $\Theta$ induces a restriction functor $$F: \asym\mathrm{-Mod} \to A\mathrm{-Mod},$$ where for $X\in \asym\mathrm{-Mod}$ the additive group $X$ becomes a left $A$-module $F(X)$ by letting
\begin{equation}\label{eq:def-module-structure}
 a\cdot x := \Theta(a)x, \qquad \forall a\in A, \: x\in X.
\end{equation}
All components $\asym_k = M_{d_k}(B_k)$ are unital algebras, with identity $\mathbf{I}_{d_k}$. The identity element of $\asym$ is $\left(\mathbf{I}_{d_n},\ldots, \mathbf{I}_{d_1}\right)$. We will identify elements $a\in \asym_k$ with the element $a\mathbf{I}_{d_k} = (0,\cdots, a, \cdots, 0) \in \asym$ and any left $\asym$-module $X$ decomposes into $X=X_n\times \cdots \times X_1$, where $X_k = \asym \mathbf{I}_{d_k} X =  \asym_k X$ is a left $\asym_k$-module. Hence, any element of $X$ can be uniquely written as $x=(x_n,\ldots, x_1)$ with $x_i\in X_i$ and the $A$-action of $a\in A$ on $x\in F(X)$ is given by $a\cdot x = \Theta(a)x = (\Theta_n(a) x_n, \ldots, \Theta_1(a)x_1)$. Moreover, for $\asym$-modules $X$ and $Y$, 
$\Hom_{\asym}(X,Y) = \prod_{k=1}^n \Hom_{\asym_k}(X_k,Y_k).$ 
Recall that by definition of an affine cellular algebra $A=\bigoplus_{i=1}^n J'_i$ as $k$-module, where $J_i = \bigoplus_{j\leq i} J'_j$.

\begin{lemma}\label{lem:idempotent} 
Let $1\leq k ,l \leq n$ and assume that $J_k/J_{k-1}$ is idempotent. The following statements hold for an $\asym_k$-module $X$ and an $\asym_l$-module $Y$.
\begin{enumerate}
    \item $J'_k\cdot X = X$.
    \item $\Hom_A(F(X),F(Y))=0$, if $k<l$.
    \item $\Hom_A(F(X),F(Y))=\Hom_{\asym}(X,Y)$, if $k=l$.
\end{enumerate}
\end{lemma}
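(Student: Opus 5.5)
Throughout the proof I use the description of $\Theta$ in Theorem \ref{TheoremAtext}: for $a\in J'_k$ we have $\Theta_k(a)=\alpha_k(\overline{a})\psi^{(k)}$ and $\Theta_l(a)=0$ for $l<k$.

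\emph{Part (1).} Since $X$ is an $\asym_k$-module, for $a\in J'_k$ we get $a\cdot x=\Theta_k(a)x=\alpha_k(\overline a)\psi^{(k)}x$. The map $J'_k\to J_k/J_{k-1}$ is a $k$-linear bijection, so $\alpha_k(\overline{a})$ runs through all of $M_{d_k}(B_k)$ as $a$ runs through $J'_k$. Hence
\[
J'_k\cdot X=M_{d_k}(B_k)\,\psi^{(k)}X .
\]
Idempotency of $J_k/J_{k-1}$ translates, via $\alpha_k$ and (\ref{eq2}), into $M_{d_k}(B_k)\psi^{(k)}M_{d_k}(B_k)=M_{d_k}(B_k)$, exactly as in the proof of Lemma \ref{lemma12}. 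In particular $\mathbf I_{d_k}=\sum_r P_r\psi^{(k)}Q_r$ for suitable matrices $P_r,Q_r$. Therefore every $x\in X$ satisfies
\[
x=\mathbf I_{d_k}x=\sum_r P_r\psi^{(k)}(Q_rx)\in M_{d_k}(B_k)\psi^{(k)}X,
\]
which gives $J'_k\cdot X=X$.

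\emph{Part (2).} Let $k<l$ and let $f\colon F(X)\to F(Y)$ be $A$-linear. Every $a\in J'_k\subseteq J_k\subseteq J_{l-1}$ lies in $\bigoplus_{j<l}J'_j$, so by the formula above $\Theta_l(a)=0$. Thus $J'_k\cdot F(Y)=0$. Using (1),
\[
f(X)=f(J'_k\cdot X)=J'_k\cdot f(X)\subseteq J'_k\cdot Y=0 .
\]
This is the argument of Lemma \ref{lemma2}(2), applied to the ideal $J_k$. One should check that $J_k$ also kills $Y$, and it does: $J_k=\bigoplus_{j\le k}J'_j$ and every such $J'_j$ has zero $\Theta_l$-component.

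\emph{Part (3).} Here $k=l$. Every $\asym$-linear map is $A$-linear, which gives one inclusion. For the other, let $f$ be $A$-linear and take $M\in M_{d_k}(B_k)$ and $x\in X$. By (1) we may write $x=\sum_r P_r\psi^{(k)}x_r$ with $x_r=Q_rx$. The element $P_r\psi^{(k)}$ equals $\Theta_k(a_r)$ for the $a_r\in J'_k$ with $\alpha_k(\overline{a_r})=P_r$. Likewise $MP_r\psi^{(k)}=\Theta_k(b_r)$ for the $b_r\in J'_k$ with $\alpha_k(\overline{b_r})=MP_r$. Then
\[
f(Mx)=\sum_r f(b_r\cdot x_r)=\sum_r b_r\cdot f(x_r)=\sum_r MP_r\psi^{(k)}f(x_r)=M\sum_r a_r\cdot f(x_r)=Mf\Big(\sum_r a_r\cdot x_r\Big)=Mf(x).
\]
So $f$ is $\asym_k$-linear, hence $\asym$-linear, since the other components act by zero on both $X$ and $Y$.

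I expect the main obstacle to be bookkeeping rather than ideas. The key points are to make the identification $J'_k\cong J_k/J_{k-1}\cong M_{d_k}(B_k)$ precise enough that $\Theta_k(J'_k)=M_{d_k}(B_k)\psi^{(k)}$ is closed under left multiplication by arbitrary matrices, and to confirm that idempotency really yields $\mathbf I_{d_k}\in M_{d_k}(B_k)\psi^{(k)}M_{d_k}(B_k)$.
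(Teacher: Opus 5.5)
Your proof is the paper's proof step for step: (1) from $\Theta_k(J'_k)=\asym_k\psi^{(k)}$ and $\asym_k\psi^{(k)}\asym_k=\asym_k$, (2) from $f(X)=J'_k\cdot f(X)\subseteq J'_k\cdot Y=0$, and (3) by writing $\mathbf{I}_{d_k}=\sum_r P_r\psi^{(k)}Q_r$ and realising $P_r\psi^{(k)}$ and $MP_r\psi^{(k)}$ as $\Theta_k$ of elements of $J'_k$.

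One correction is needed in (2). The vanishing you record at the outset, $\Theta_l(a)=0$ for $l<k$ when $a\in J'_k$ (as displayed in Theorem \ref{TheoremAtext}), has the inequality reversed and is false in general. In the $\Sigma_3$ example, $C_{id}\in J'_3$ but $\Theta_1(C_{id})=1$. As you quote it, that formula does not yield the step you actually use, namely $\Theta_l(J'_k)=0$ for $l>k$.

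That step is nevertheless true, for the reason you half-state: $J'_k\subseteq J_k\subseteq J_{l-1}$, and $J_{l-1}$ annihilates $J_l/J_{l-1}$. Hence $\Theta_l(a)=\alpha_l(ae_l)=0$. This is the direction the paper's own proof of (2) uses. With this one-line justification in place of the citation, your proof is complete.
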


\begin{proof}
Note, that the $A$-action on $X$ is completely determined by $\Theta_k$, i.e. $a\cdot x = \Theta(a)x = \Theta_k(a)x$, for any $a\in A$ and $x\in X$, because $\asym_iX=0$, for $i\neq k$.
Moreover, since $J_k/J_{k-1}$ is idempotent, $\asym_k\psi^{(k)}\asym_k=\asym_k$ holds.

(1) Using (\ref{eq:image-theta-k}), we have  $J_k' \cdot X= \Theta_k(J_k')\asym_k X =\asym_k\psi^{(k)}\asym_kX = \asym_k X = X.$

(2) Let $f:F(X)\to F(Y)$ be $A$-linear. By (1) and $\Theta_l(J'_k)=0$, for $l>k$:
$$f(X)=f(J'_k\cdot X)=J'_k\cdot f(X) \subseteq J'_k\cdot Y = \Theta_l(J'_k)Y=0.$$

(3) We always have $\Hom_{\asym}(X,Y)\subseteq \Hom_{A}(F(X),F(Y))$. Assume $k=l$ and let $f:F(X)\to F(Y)$ be  $A$-linear. We need to show that $f$ is $\asym$-linear. 
Since $\alpha_k(J_k/J_{k-1})=\asym_k$, for any $y \in \asym_k$, there exists $\tilde{y}\in J_k'$ such that
$\Theta_k(\tilde{y}) = \alpha_k(\overline{\tilde{y}})\psi^{(k)}=y\psi^{(k)}.$ In particular, $a(\tilde{y}\cdot z) = ay\psi^{(k)}z = \widetilde{ay}\cdot z$, for any $a,y,z\in \asym_k$.
Furthermore, since $\asym_k\psi^{(k)}\asym_k = \asym_k$, there exist elements $y_i, z_i\in \asym_k$, such that 
\begin{equation} \mathbf{I}_{d_k} 
    = \sum_{i=1}^t y_i \psi^{(k)} z_i
        =  \sum_{i=1}^t \tilde{y_i}\cdot z_i.\end{equation}

Thus, any element $a\in \asym_k$ can be written as $a=\sum_{i=1}^t \widetilde{ay_i}\cdot z_i$. Then, for $x\in X$:
\begin{eqnarray*}
    f(ax)
    = f\left(\sum_{i=1}^t \widetilde{ay_i} \cdot z_ix\right)
    = \sum_{i=1}^t \widetilde{ay_i} \cdot f ( z_ix )
    = \sum_{i=1}^t a \tilde{y_i}\cdot f ( z_ix )
    = a f\left(\sum_{i=1}^t \tilde{y_i}\cdot z_ix \right) = af(x).
\end{eqnarray*}
Hence $f$ is $\asym_k$-linear and also $\asym$-linear, since $\asym_i X = 0$, for $i\neq k$.
\end{proof}

A module is said to be {\em torsionless} provided it can be embedded into a projective (free) module. 

\begin{lemma}\label{lem:torsionless} Let  $1\leq l < k \leq m$,  $X$ a  $\asym_k$-module and $Y$ a torsionless $\asym_l$-module.  If $\det(\psi^{(l)})$ is a not a zero-divisor in $B_l$, then  $\Hom_A(F(X),F(Y))=0$. \end{lemma}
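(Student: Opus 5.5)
Since $l<k$, we have $J_l\subseteq J_{k-1}$. By the formula in Theorem \ref{TheoremAtext}, every element of $J_l$ acts as zero on $F(X)$: for $a\in J'_j$ with $j\le l<k$ we have $\Theta_k(a)=0$, and $X=\asym_k X$. Hence $J_l\cdot F(X)=0$. If $f:F(X)\to F(Y)$ is $A$-linear, then $J_l\cdot f(X)=f(J_l\cdot X)=0$. The claim therefore reduces to
\[
\ann_{F(Y)}(J_l)=0,
\]
which is exactly Lemma \ref{lemma2}(1) with $J=J_l$.

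To prove this vanishing, I will use the elements $a\in J'_l$. These act on $Y$ through $\Theta_l(a)=\alpha_l(\overline{a})\psi^{(l)}$, and as $a$ runs through $J'_l$, $\alpha_l(\overline a)$ runs through all of $\asym_l=M_{d_l}(B_l)$. So for $y\in Y$ with $J_l\cdot y=0$ we get $M\psi^{(l)}y=0$ for every $M\in M_{d_l}(B_l)$; in particular $\psi^{(l)}y=0$. Let $\psi^{\mathrm{adj}}$ be the adjugate of $\psi^{(l)}$, so that $\psi^{\mathrm{adj}}\psi^{(l)}=\det(\psi^{(l)})\mathbf{I}_{d_l}$. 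Multiplying by it gives $\det(\psi^{(l)})\,y=0$, where $\det(\psi^{(l)})$ acts as a central scalar of $\asym_l$.

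The remaining step, and the one that uses torsionlessness, is to show that $\delta:=\det(\psi^{(l)})$ acts injectively on $Y$. Since $Y$ is torsionless, it embeds into a free $\asym_l$-module. Because $Y$ is an $\asym_l$-module, I can project a free $\asym$-module onto its $\asym_l$-component, so the target may be taken to be $\asym_l^{(I)}=M_{d_l}(B_l)^{(I)}$. On this module $\delta$ acts entrywise on matrices over $B_l$. Since $\delta$ is not a zero-divisor in $B_l$, multiplication by $\delta$ is injective on $M_{d_l}(B_l)^{(I)}$, and hence on the submodule $Y$. Thus $y=0$, so $\ann_{F(Y)}(J_l)=0$ and $f=0$.

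The main point to check is the reduction of "torsionless" to embedding into copies of $\asym_l$ rather than of $\asym$. Even without that reduction the argument works: $\delta\mathbf{I}_{d_l}$ annihilates nothing nonzero in the $\asym_l$-component of any free $\asym$-module, and $Y$ lies in that component because $Y=\mathbf{I}_{d_l}Y$.
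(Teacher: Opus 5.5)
Your proof is correct and follows the paper's argument. The same two ingredients appear in both: elements of $J'_l$ (you use all of $J_l$) kill $F(X)$ because $\Theta_k$ vanishes on $J_{k-1}$, and a suitable element of $J'_l$ acts on $Y$ as $\psi^{(l)}$, which has zero annihilator on a torsionless $\asym_l$-module because $\det(\psi^{(l)})$ is not a zero-divisor. Your adjugate computation and the appeal to Lemma \ref{lemma2}(1) simply spell out these steps.

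One small point concerns how you justify $\Theta_k(J_l)=0$. Argue it directly: for $a\in J_l\subseteq J_{k-1}$ one has $ae_k=0$ in $J_k/J_{k-1}$. Do not cite the displayed formula of Theorem \ref{TheoremAtext}. As printed, that formula puts the zeros on the wrong side: it claims $\Theta_l(a)=0$ for $l<k$ when $a\in J'_k$, whereas in fact $\Theta_l(a)=0$ for $l>k$.
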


\begin{proof}
Since $\det(\psi^{(l)})$ is a not a zero-divisor in $B_l$, $\Ann_{P}(\psi^{(l)})=0$, for any free $\asym_l$-module $P=\asym_l^{(I)}$. In particular, 
$\Ann_{Y}(\psi^{(l)})=0$, for any torsionless $\asym_l$-module $Y$.

Let $f\in \Hom_A(F(X),F(Y))$. Then $\psi^{(l)} f(X) \subseteq  J_l' \cdot f(X) = f(J'_l\cdot X) = f(\Theta_k(J_l')X) = 0$, since  $\Theta_k(J_l')=0$ as $l<k$.  Thus, $f(X)\subseteq \Ann_Y(\psi^{(l)})=0$ and $f=0$.
\end{proof}

Let $\cT(\asym)$ denote the class of torsionless left $\asym$-modules. The restriction
functor $F:\cT(\asym) \to A\mathrm{-Mod}$ is fully faithful on this subcategory.

\begin{theorem}[Theorem \ref{TheoremB}] \label{TheoremBtext}
  Let $A$ be an affine cellular algebra with an affine cell chain such that
  all occurring affine cell ideals are idempotent and no determinant of their
  respective swich matrix is a zero-divisor.
  Then the embedding
  $\Theta: A \to \asym$ induces a fully faithful restriction functor
  $F: \cT(\asym) \to A\mathrm{-Mod}$, between torsionless $\asym$-modules and
  $A$-modules.
\end{theorem}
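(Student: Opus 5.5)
The plan is to decompose torsionless $\asym$-modules into their components and treat the diagonal and off-diagonal blocks of a Hom-space separately. Faithfulness is automatic, because $F$ is a restriction functor along the ring map $\Theta$: it does not change underlying maps. So everything reduces to fullness, that is, to the equality $\Hom_A(F(X),F(Y))=\Hom_{\asym}(X,Y)$ for torsionless $\asym$-modules $X$ and $Y$.

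Write $X=X_n\times\cdots\times X_1$ and $Y=Y_n\times\cdots\times Y_1$ with $X_k=\asym_kX$ and $Y_l=\asym_lY$. Since $F$ is additive and finite products are biproducts, we get
\begin{equation*}
\Hom_A(F(X),F(Y))=\prod_{k,l}\Hom_A(F(X_k),F(Y_l)).
\end{equation*}
Likewise $\Hom_{\asym}(X,Y)=\prod_k\Hom_{\asym_k}(X_k,Y_k)$. It therefore suffices to show two things: the diagonal terms satisfy $\Hom_A(F(X_k),F(Y_k))=\Hom_{\asym}(X_k,Y_k)$, and the off-diagonal terms $\Hom_A(F(X_k),F(Y_l))$ vanish for $k\neq l$.

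First I will check that each component $Y_l$ of a torsionless module is again a torsionless $\asym_l$-module. If $Y\subseteq P$ with $P$ free over $\asym$, then $Y_l=\asym_lY\subseteq \asym_lP$. Moreover $\asym_lP$ is a free $\asym_l$-module, since $\asym_l\asym=\asym_l$.

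With this in hand, the three kinds of block are handled as follows.
\begin{enumerate}
\item For the diagonal blocks $k=l$, Lemma \ref{lem:idempotent}(3) applies, because $J_k/J_{k-1}$ is idempotent.
\item For $k<l$, Lemma \ref{lem:idempotent}(2) gives vanishing, again by idempotency of $J_k/J_{k-1}$, and with no torsionless hypothesis needed.
\item For $k>l$, Lemma \ref{lem:torsionless} gives vanishing, since $Y_l$ is torsionless and $\det(\psi^{(l)})$ is not a zero-divisor.
\end{enumerate}
Assembling these blocks gives fullness.

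The only point needing care is the block decomposition of $\Hom_A$ in the case of infinitely many summands. Here there are only $n$ components, so the decomposition is finite and unproblematic. The step where the hypotheses genuinely bite is the case $k>l$. This is exactly where torsionlessness is needed to obtain $\ann_{Y_l}(\psi^{(l)})=0$, and that is why the theorem is restricted to $\cT(\asym)$.
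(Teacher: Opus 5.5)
Your proposal is correct and follows the paper's proof essentially verbatim. It uses the same block decomposition of $\Hom_A(F(X),F(Y))$, with Lemma \ref{lem:idempotent}(3) on the diagonal, Lemma \ref{lem:idempotent}(2) for $k<l$ and Lemma \ref{lem:torsionless} for $k>l$, and your explicit check that each component $Y_l=\asym_lY$ is torsionless over $\asym_l$ (via $\asym_lP\cong\asym_l^{(I)}$) fills in a step the paper uses without comment.
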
 
\begin{proof}
  Let $X=\prod_{i=1}^n X_i$ and $Y=\prod_{i=1}^n Y_i$ be any torsionless
  $\asym$-modules. 
  As  $J_k/J_{k-1}$ is idempotent, $\Hom_A(F(X_k),F(Y_l)) = 0$ for
  all $k<l$, by Lemma \ref{lem:idempotent}.
By Lemma \ref{lem:torsionless}, $\Hom_A(F(X_k),F(Y_l)) = 0$ for $k>l$. Furthermore, $\Hom_A(F(X_k),F(Y_k) = \Hom_{\asym}(X_k,Y_k)$, for all $k$, by Lemma \ref{lem:idempotent}. Therefore,
$$\Hom_A(F(X),F(Y)) = \bigoplus_{k,l=1}^n \Hom_A(F(X_k),F(Y_l)) = \bigoplus_{k=1}^n \Hom_{\asym}(X_k,Y_k) = \Hom_{\asym}(X,Y).$$
\end{proof}


In the second part of this section we will compare the primitive spectrum of an affine cellular algebra with the primitive spectrum of its asymptotic algebra.
In \cite{BN} Baum and Nistor determined the periodic cyclic homology of the Iwahori-Hecke algebra $H_q$ at a proper root of unity $q$.  Their method is based on a general result, which states that a map called \emph{weakly spectrum preserving morphism} of finite type algebras induces an isomorphism in periodic cyclic homology. Furthermore, they showed that Lusztig's embedding $\phi_q: H_q \to J$, of the Iwahori–Hecke algebra $H_q$ into Lusztig's asymptotic algebra $J$ is weakly spectrum preserving. Here we will show that the embedding of an affine cellular algebra into its asymptotic algebra is weakly spectrum preserving if and only if all affine cell ideals appearing in the affine cell chain of the algebra are idempotent ideals (not necessarily generated by idempotent elements).

Let $R$ be a ring and denote by $\Prim(R)$ the set of (left) primitive ideals of $R$, where an ideal $P$ is called left primitive if it is the annihilator of a simple left $R$-module. Over a not necessarily unital ring $R$, a module $V$ is said to be simple if $R V\neq 0$ and $R v=V$ for all  $v\in V\backslash\{0\}$.

\begin{definition}[\cite{BN}]
  A $k$-algebra homomorphism  $\Theta:R\to S$ between two (not necessarily unital) $k$-algebras $R$ and $S$ is called \emph{spectrum preserving} if for any
  $P\in \Prim(S)$, $\Theta^{-1}(P)$ is contained in a unique primitive ideal of $R$ and the resulting map $\Theta^*: \Prim(S)\to \Prim(R)$ is a bijection.
\end{definition}

Given $B$ an affine commutative $k$-algebra, $n\geq 1$ and $\psi\in M_n(B)$, we denote, as above, the swich algebra $(M_n(B),\psi)$ by $\widetilde{M_n(B)}$. The map
$$ \varphi: \widetilde{M_n(B)} \to M_n(B), \qquad x\mapsto \varphi(x)=x\psi$$ is a  homomorphism of (not necessarily unital) $k$-algebras. We will examine, when $\varphi$ is spectrum preserving. A description of simple $\widetilde{M_n(B)}$-modules in terms of the simple $M_n(B)$-modules has been obtained in \cite[Theorem 3.10]{KXaffine} and will allow us to obtain the following correspondence between the primitive spectra of the two rings under the hypothesis that $\widetilde{M_n(B)}$ is idempotent. The correspondence from \cite{KXaffine} is revised in the proof of the following proposition for the sake of completeness.

Note that by \cite[Proposition 3.8]{KXaffine},
any left $M_n(B)$-module $V$ carries a natural structure of left
$\widetilde{M_n(B)}$-module, by defining 
$$ \tilde{a}\cdot v :=  \varphi(a)v = a\psi v, \qquad \forall \tilde{a}\in \widetilde{M_n(B)}, \: v\in V.$$
We denote this left $\widetilde{M_n(B)}$-module structure on $V$ by $V_\varphi$.

\newcommand{\tR}{?}

\begin{proposition} \label{primitives} Let $B$ be an affine $k$ algebra, $\psi\in M_n(B)$ and $\widetilde{M_n(B)}=(M_n(B), \psi)$. Assume $\widetilde{M_n(B)}$ is idempotent. There is a one-to-one correspondence between the primitive spectrum of $M_n(B)$ and the one of $\widetilde{M_n(B)}$ given by
$$\begin{array}{ccc}
\varphi^* : \Prim(M_n(B)) & \longrightarrow & \Prim(\widetilde{M_n(B)})\\[0.3cm]
P & \to & \{X\in M_n(B) : \psi X \psi \in P\}
\end{array}$$
\end{proposition}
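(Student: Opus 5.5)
The plan is to pass through simple modules on both sides, using the description of simple $\widetilde{M_n(B)}$-modules from \cite[Theorem 3.10]{KXaffine} and the functor $V\mapsto V_\varphi$ of \cite[Proposition 3.8]{KXaffine}.

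\emph{Primitive ideals of $M_n(B)$.} Since $B$ is affine commutative, $M_n(B)$ is Morita equivalent to $B$. Hence its primitive ideals are exactly $M_n(\mathfrak m)$ for $\mathfrak m\in\MaxSpec(B)$, and the corresponding simple module is $V=(B/\mathfrak m)^n$.

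\emph{From $V$ to a simple $\widetilde{M_n(B)}$-module.} For such a $V$, consider $V_\varphi$, with action $\tilde a\cdot v=a\psi v$. I will show that $V_\varphi$ is simple over $\widetilde{M_n(B)}$ precisely when $\psi V\neq 0$, that is, when $\psi\notin M_n(\mathfrak m)$.
\begin{itemize}
\item If $v\neq 0$, then $\widetilde{M_n(B)}\cdot v=M_n(B)\psi v$, which is $V$ as soon as $\psi v\neq 0$.
\item The vectors $v$ with $\psi v=0$ form a submodule; this is where idempotence must enter.
\item Idempotence means $M_n(B)\psi M_n(B)=M_n(B)$. Reducing mod $\mathfrak m$ gives that $\bar\psi\neq 0$ in $M_n(B/\mathfrak m)$ for every $\mathfrak m$, so $\psi\notin M_n(\mathfrak m)$ always.
\item For this fixed $V$, the submodule $\{v:\psi v=0\}$ satisfies $\widetilde{M_n(B)}\cdot\{v:\psi v=0\}=0$. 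A simple module of a non-unital ring needs $RV\neq 0$ and $Rv=V$ for all $v\neq0$, so this submodule is harmful. Idempotence handles it: $V=M_n(B)V=M_n(B)\psi M_n(B)V=\widetilde{M_n(B)}\cdot V$.
\item Rather than argue that the vectors killed by $\psi$ vanish, it is cleaner to use the quotient $V_\varphi/\ann$, or the image $M_n(B)\psi V$. Simple $\widetilde{M_n(B)}$-modules are, by \cite[Theorem 3.10]{KXaffine}, exactly of the form $\widetilde{M_n(B)}\cdot V/\{v:\widetilde{M_n(B)}\psi v=0\}$, equivalently $\psi$-twisted simple $M_n(B)$-modules with $\psi V\neq 0$.
\end{itemize}
Under idempotence every simple $M_n(B)$-module qualifies. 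This gives a bijection between simple modules of the two rings.

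\emph{Annihilators.} It remains to compute annihilators and to check that the induced map on ideals is $P\mapsto\{X:\psi X\psi\in P\}$. Let $W$ be the simple $\widetilde{M_n(B)}$-module attached to $V$ with $P=\ann(V)$. I will show that $X$ annihilates $W$ iff $X\psi v\in\{w:\psi w \text{ acts trivially}\}$ for all $v$. Using simplicity of $V$ over $M_n(B)$, this becomes $\psi X\psi V=0$, i.e. $\psi X\psi\in P$. For the implication $\psi X\psi V=0\Rightarrow X\psi V\subseteq\ker$, I use that the kernel is $\{w:M_n(B)\psi w=0\}=\{w:\psi w\in \ann\}$ and that $\psi w=0$ in $V$.

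\emph{Bijectivity.} Injectivity on primitive ideals follows because $\{X:\psi X\psi\in M_n(\mathfrak m)\}$ determines $\mathfrak m$. Indeed, its intersection with the scalars $B$ consists of those $b$ with $b\psi^2\in M_n(\mathfrak m)$, and by idempotence $\psi M_n(B)\psi\not\subseteq M_n(\mathfrak m)$. Surjectivity comes from the classification in \cite[Theorem 3.10]{KXaffine}.

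The main obstacle is the bookkeeping in the non-unital simplicity condition together with identifying the annihilator exactly. This is the step where I would lean most heavily on the explicit form of \cite[Theorem 3.10]{KXaffine} rather than reprove it.
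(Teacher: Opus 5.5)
Most of your argument follows the paper. You twist a simple $M_n(B)$-module $V$ with $\ann(V)=M_n(\mathfrak m)$ to $V_\varphi$, pass to the quotient $V_\varphi/\ann_V(\psi)$, use idempotence to see that the quotient is non-zero, and compute its annihilator as $\{X:\psi X\psi\in P\}$. Your opening claim that $V_\varphi$ itself is simple whenever $\psi V\neq 0$ is false: $\psi=E_{12}$ has non-zero kernel on $(B/\mathfrak m)^2$. You rightly abandon it for the quotient. For surjectivity you cite \cite[Theorem 3.10]{KXaffine}, whereas the paper reproves that classification using the map $q:M_n(B)_\varphi\to E$, the centre $Z\cong B$ and Nakayama. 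Since the paper itself presents the proposition as derived from that classification, this is a legitimate shortcut.

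The genuine problem is your injectivity step. The intersection of $\varphi^*(M_n(\mathfrak m))$ with the scalars is $\{b\in B: b\psi^2\in M_n(\mathfrak m)\}$. This equals $\mathfrak m$ only if $\psi^2\notin M_n(\mathfrak m)$. Idempotence gives you only $\psi M_n(B)\psi\not\subseteq M_n(\mathfrak m)$, which does not imply $\psi^2\notin M_n(\mathfrak m)$. For example, take $B=k[x]$, $n=2$ and $\psi=E_{12}$. Then $M_2(B)\psi M_2(B)=M_2(B)$, so the hypothesis holds, but $\psi^2=0$. Hence the scalar intersection is all of $B$ for every maximal ideal, and it distinguishes nothing.

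The ingredient you name is the right one, but it has to be used differently. One option is to take $\{b\in B: bM_n(B)\subseteq\varphi^*(M_n(\mathfrak m))\}=\{b\in B: b\,\psi M_n(B)\psi\subseteq M_n(\mathfrak m)\}$. This equals $\mathfrak m$ because $B/\mathfrak m$ is a field and $\psi M_n(B)\psi\not\subseteq M_n(\mathfrak m)$. Alternatively, argue as the paper does. If $\varphi^*(P_1)=\varphi^*(P_2)$ with $P_1\neq P_2$, then $M_n(B)=P_1+P_2\subseteq\varphi^*(P_1)$. So $\psi M_n(B)\psi\subseteq P_1$, and hence $M_n(B)=M_n(B)\psi M_n(B)\psi M_n(B)\subseteq P_1$, a contradiction.
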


\begin{proof}
Let $P\in \Prim(M_n(B))$. Then $P$ is a maximal ideal of $M_n(B)$ and $P=M_n(\mathfrak{m})$ for some ${\mathfrak m}$ a maximal ideal of $B$.

Let $V$ be a simple $M_n(B)$-module such that $P=\ann_{M_n(B)}(V)$. As above,
$V_{\varphi}$ is a $\widetilde{M_n(B)}$-module.
Note that $\ann_{V_{\varphi}}( \mathbf{I}_n)=\ann_V(\psi)$ is an $\widetilde{M_n(B)}$-submodule of $V_{\varphi}$ and consider $V_{\varphi}/ \ann_{V_{\varphi}}( \mathbf{I}_n )$. If $V_{\varphi}= \ann_{V_{\varphi}}(\mathbf{I}_n)$, then $\psi V=0$. As $\widetilde{M_n(B)}$ is idempotent, $M_n(B)\psi M_n(B)=M_n(B)$. As $V$ is a simple $M_n(B)$-module, it would follow that $V=M_n(B)V=M_n(B)\psi V=0$, a contradiction. Therefore $V_{\varphi}\neq \ann_{V_{\varphi}}(\mathbf{I}_n)$. Also $\widetilde{M_n(B)}( V_{\varphi}/ \ann_{V_{\varphi}}(\mathbf{I}_n))\neq 0$. Now take $W$ an $\widetilde{M_n(B)}$-submodule of $V_{\varphi}$ properly containing $\ann_{V_{\varphi}}(\mathbf{I}_n)$ and $w\in W\backslash \ann_{V_{\varphi}}(\mathbf{I}_n).$ We have that $M_n(B)\psi w\neq 0$ is a $M_n(B)$-submodule of $V$, hence $M_n(B)\psi w=V$ and $V_{\varphi}=V=M_n(B)\psi w=\widetilde{M_n(B)} \cdot w\subseteq W$. Hence 
$$V_{\varphi}/ \ann_{V_{\varphi}}(\mathbf{I}_n)$$
is a simple $\widetilde{M_n(B)}$-module and  
\begin{eqnarray*}\ann_{\widetilde{M_n(B)}}\left(V_{\varphi}/ \ann_{V_{\varphi}}(\mathbf{I}_n)\right)
&=&\{X\in M_n(B): X\psi v\in \ann_{V_{\varphi}}(\mathbf{I}_n),  \forall v\in V\}\\
&=&\{X\in M_n(B): \psi X\psi v=0, \forall v\in V\}.\end{eqnarray*}
Therefore 
\begin{equation}\label{UFF}
\ann_{\widetilde{M_n(B)}}\left(V_{\varphi}/ \ann_{V_{\varphi}}(\mathbf{I}_n)\right)= \{X\in M_n(B) : \psi X \psi \in P\}
\end{equation}
is a primitive ideal of $\widetilde{M_n(B)}$.


Let $Q\in \Prim(\widetilde{M_n(B)}$ and $E$ a simple left $\widetilde{M_n(B)}$-module such that $Q=\ann_{\widetilde{M_n(B)}}(E)$.  Consider the left $\widetilde{M_n(B)}$-module structure $M_n(B)_\varphi$ on $M_n(B)$. As $E$ is simple, $E=\widetilde{M_n(B)} e$ for any $e\in E\backslash\{0\}$. As in \cite[Proposition 3.10]{KXaffine} consider $q:M_n(B)_\varphi \rightarrow E$ such that $q(r)=\tilde{r}e$. Recall, that the product in $\widetilde{M_n(B)}$ is defined as $\tilde{a}\tilde{b}=\widetilde{a\psi b}$, for any $\tilde{a}, \tilde{b}\in \widetilde{M_n(B)}$. 
Given $\tilde{a}\in \widetilde{M_n(B)}, b\in M_n(B)$
$$\tilde{a}q(b)=\tilde{a}(\tilde{b}e)=(\tilde{a}\tilde{b})e=(\widetilde{a\psi b}) e=q(a\psi b)=q(\tilde{a}\cdot b).$$
So $q$ is a $\widetilde{M_n(B)}$-epimorphism and as $E$ is simple, $\Ker{q}$ is a maximal $\widetilde{M_n(B)}$-submodule of ${M_n(B)}_\varphi$. Let $Z$ be the centre of $M_n(B)$ and consider $Z\Ker{q}$ that contains $\Ker{q}$. Note that 
$$\tilde{a}\cdot (zx)=a\psi zx=z(\tilde{a}\cdot x)\in Z \Ker{q}, \qquad \forall a\in M_n(B), z\in Z, x\in \Ker{q}.$$
Hence $Z\Ker{q}$ is an $\widetilde{M_n(B)}$-submodule of $M_n(B)_\varphi$ that contains the maximal submodule $\Ker{q}$. Thus $\Ker{q}=Z\Ker{q}$ or $Z\Ker{q}=M_n(B)$. Assume  $Z\Ker{q}=M_n(B)$, then
$$E = q(M_n(B)) = q(\widetilde{M_n(B)}\cdot M_n(B))=q(M_n(B)\psi Z\Ker{q})=\widetilde{M_n(B)Z}q(\Ker{q})=0$$
a contradiction. Therefore,  $\Ker{q}=Z\Ker{q}$ and $\Ker{q}$ is a $Z$-module, as well as $E\cong M_n(B)/\Ker{q}$. As a $Z$-module, $E$ is  finitely generated and for every $\mathfrak{m}\in \MaxSpec(Z)$, $\mathfrak{m}E$ is an $\widetilde{M_n(B)}$-module, so $\mathfrak{m}E=0$ or $\mathfrak{m}E=E$.
If  $\mathfrak{m}E=E$ for all $\mathfrak{m}\in \MaxSpec(Z)$ using localisation and Nakayama's Lemma, we would have $E=0$ \cite[Proposition 3.8]{Atiyah}. Hence there is $\mathfrak{m}$, a maximal ideal of $Z=Z(M_n(B))\cong B$, such that $\mathfrak{m}E=0$. Thus $M_n(\mathfrak{m})\subseteq \Ker{q}$ and $q$ induces a $Z$-epimorphism $q':M_n(B)/M_n(\mathfrak{m}) \longrightarrow E$. Also $q'$ is a $\widetilde{M_n(B)}$-homomorphism from $(M_n(B)/M_n(\mathfrak{m}))_{\varphi}$ to $E$. Note that $\mathfrak{m}$ under the above conditions is unique.

As an $M_n(B)$-module, $M_n(B)/M_n(\mathfrak{m})\cong V_1\oplus\cdots\oplus V_n$  a direct sum of copies of a simple $M_n(B)$-modules with annihilator $M_n(\mathfrak{m})$. Since $q'\neq 0$, there is $i\in\{1,\ldots, n\}$ such that $q'(V_i)\neq 0$. For such an $i$, $\Ker{q'|_{(V_i)_{\varphi}}}=\ann_{(V_i)_{\varphi}}(\mathbf{I}_n)$, indeed if $x\in \Ker{q'}\cap (V_i)_{\varphi}$ and $\psi x\neq 0$, $M_n(B)\psi x=V_i$, for $V_i$ is a simple $M_n(B)$-module, and so $\widetilde{M_n(B)} x=(V_i)_{\varphi}$ and $q'((V_i)_{\varphi}) = q'(\widetilde{M_n(B)} x)=\widetilde{M_n(B)} q'(x)=0$, a contradiction. Thus $\Ker{q'}\cap V_i \subseteq \ann_{(V_i)_{\varphi}}(\mathbf{I}_n)$. If $x\in V_i$ is such that $\psi x=0$ then $\widetilde{M_n(B)} q'(x)=q'(\widetilde{M_n(B)} x)=q'(M_n(B)\psi x)=0$. Since $E$ is a simple $\widetilde{M_n(B)}$-module, if $q(x)\neq 0$,  $\widetilde{M_n(B)} q(x)$ would be $E$, hence $x\in \Ker{q'}$. It follows that, as an $\widetilde{M_n(B)}$-module, $E$ is isomorphic to $(V_i)_{\varphi}/ \ann_{(V_i)_{\varphi}}(\mathbf{I}_n)$, so by (\ref{UFF})
$$Q=\ann_{\widetilde{M_n(B)}}(E)=\ann_{\widetilde{M_n(B)}}((V_i)_{\varphi}/ \ann_{(V_i)_{\varphi}}(\mathbf{I}_n))=\{X\in M_n(B): \psi X\psi \in M_n(\mathfrak{m})\}.$$

Let $P_1, P_2$ be distinct elements of $\Prim(M_n(B))$. Since $P_1, P_2$ are maximal ideals, $P_1+P_2=M_n(B)$. As $P_1\subseteq \varphi^*(P_1)$ and $P_2\subseteq \varphi^*(P_2)$, if $\varphi^*(P_1) = \varphi^*(P_2)$, $M_n(B)=P_1+P_2\subseteq \varphi^*(P_1)$ and $\psi M_n(B) \psi \subseteq P_1$. By hypothesis $\widetilde{M_n(B)}$ is idempotent, so $M_n(B)\psi M_n(B)=M_n(B)$  and 
$M_n(B)=M_n(B)\psi M_n(B)\psi M_n(B)\subseteq P_1$ a contradiction, so $\varphi^*$ is injective.
\end{proof}

\begin{proposition}\label{prop:weakly}
The ring homomorphism $\varphi:\widetilde{M_n(B)}\to M_n(B)$ is spectrum preserving if and only if $\widetilde{M_n(B)}$ is idempotent.
 \end{proposition}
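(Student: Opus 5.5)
We prove the two implications separately, using Proposition \ref{primitives} for the ``if'' direction and a direct construction of a bad primitive ideal for the ``only if'' direction.

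\emph{``If''.} Assume $\widetilde{M_n(B)}$ is idempotent, i.e. $M_n(B)\psi M_n(B)=M_n(B)$. Let $P\in\Prim(M_n(B))$, so $P=M_n(\mathfrak m)$ for some maximal ideal $\mathfrak m$ of $B$. Then
$$\varphi^{-1}(P)=\{X : X\psi\in P\}\subseteq \{X:\psi X\psi\in P\}=\varphi^*(P),$$
and $\varphi^*(P)$ is primitive by Proposition \ref{primitives}.

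For uniqueness, take any primitive ideal $Q$ of $\widetilde{M_n(B)}$ containing $\varphi^{-1}(P)$. By the second half of the proof of Proposition \ref{primitives}, $Q=\varphi^*(P')$ with $P'=M_n(\mathfrak m')$. Since $\varphi^{-1}(P)\supseteq P$ (because $P$ is an ideal, so $P\psi\subseteq P$), we get $P\subseteq \varphi^*(P')$. If $\mathfrak m\neq\mathfrak m'$, then $P+P'=M_n(B)$ lies in $\varphi^*(P')$, so $\psi M_n(B)\psi\subseteq P'$. Idempotency then gives $M_n(B)=M_n(B)\psi M_n(B)\psi M_n(B)\subseteq P'$, a contradiction. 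Hence $Q=\varphi^*(P)$ is the unique primitive ideal containing $\varphi^{-1}(P)$, and the induced map $\Theta^*$ is exactly $\varphi^*$.

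Proposition \ref{primitives} says $\varphi^*$ is a bijection, so $\varphi$ is spectrum preserving.

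\emph{``Only if''.} Suppose $\widetilde{M_n(B)}$ is not idempotent. Then $I:=M_n(B)\psi M_n(B)$ is a proper two-sided ideal of $M_n(B)$, so $I=M_n(\mathfrak a)$ with $\mathfrak a\neq B$. Choose a maximal ideal $\mathfrak m\supseteq\mathfrak a$ and set $P=M_n(\mathfrak m)$. Then $\psi\in P$, so for every $X$ we have $X\psi\in P$, i.e. $\varphi^{-1}(P)=\widetilde{M_n(B)}$. This set is not contained in any primitive ideal, since primitive ideals are proper: a simple module $V$ satisfies $RV\neq 0$. Hence the condition in the definition fails for this $P$, and $\varphi$ is not spectrum preserving.

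The main obstacle is the uniqueness step in the ``if'' direction, namely showing that every primitive ideal of the non-unital ring $\widetilde{M_n(B)}$ has the form $\varphi^*(P')$. This classification is taken from the proof of Proposition \ref{primitives}.
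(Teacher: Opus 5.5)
Your proof is correct and follows the paper's argument in both directions. In the ``if'' part you use the bijection $\varphi^*$ of Proposition \ref{primitives} and the comaximality of distinct maximal ideals $P,P'$; the paper concludes directly that $Q\supseteq P+P'=M_n(B)$ cannot be proper, while your extra step through idempotency is harmless. In the ``only if'' part you exhibit $P=M_n(\mathfrak m)$ with $\varphi^{-1}(P)=\widetilde{M_n(B)}$, exactly as the paper does, and the only difference is that you obtain $\mathfrak m$ from the description of two-sided ideals of $M_n(B)$ as $M_n(\mathfrak a)$, whereas the paper cites \cite[Proposition 2.5(6)]{CKLS} for it.
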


 \begin{proof}
Assume first that $\widetilde{M_n(B)}$ is not idempotent. It follows by \cite[Proposition 2.5(6)]{CKLS} that there is a maximal ideal $\mathfrak{m}$ of $B$ containing the ideal generated by all entries of $\psi$. Considering $M_n(\mathfrak{m})$, maximal ideal of $M_n(B)$, hence primitive, it follows that $\varphi(\widetilde{M_n(B)}) = M_n(B)\psi \subseteq M_n(\mathfrak{m})$, or in other words, $\varphi^{-1}(M_n(\mathfrak{m}))=\widetilde{M_n(B)}$. Thus, $\varphi^{-1}(M_n(\mathfrak{m}))$  is not contained in any primitive ideal of $\widetilde{M_n(B)}$ and $\varphi$ cannot be spectrum preserving, proving the only if part of the statement.

Assume now that $\widetilde{M_n(B)}$ is idempotent, take $P\in \Prim(M_n(B))$ and consider the bijection $\varphi^*$ between $\Prim(M_n(B))$ and $\Prim(\widetilde{M_n(B)})$, as in Proposition \ref{primitives}. By definition
$$P\subseteq \varphi^{-1}(P)=\{ X\in M_n(B) : X\psi \subseteq P\}\subseteq \varphi^*(P).$$
We need to show that $\varphi^*(P)$ is the unique primitive ideal containing $\varphi^{-1}(P)$. Suppose there exists a primitive ideal $Q\in \Prim(\widetilde{M_n(B)})$, such that $\varphi^{-1}(P)\subseteq Q$. By Proposition \ref{primitives}, $Q=\varphi^*(P_1)$, for some $P_1\in \Prim(M_n(B))$. In particular, $P_1\subseteq \varphi^{-1}(P_1) \subseteq Q$. Hence,  $P+P_1 \subseteq \varphi^{-1}(P) + \varphi^{-1}(P_1) \subseteq Q$. If $P\neq P_1$, then as both are maximal ideals, $P+P_1=M_n(B)$ and hence $Q=M_n(B)$, which is a contradiction. Thus  $\varphi^*(P)$ is the only primitive ideal that contains $\varphi^{-1}(P)$.
\end{proof}

\begin{definition}[\cite{BN}]
A morphism $\Theta: R\to S$ of $k$-algebras is called \emph{weakly spectrum preserving} if, and only if, there exist increasing filtrations
$0 = J_0 \subset J_1 \subset \cdots \subset J_n = R$ and 
$0 = I_0 \subset I_1\subset \cdots \subset I_n = S$ 
of two-sided ideals such that $\Theta(J_i)\subseteq I_i$ and the induced morphisms $J_i/J_{i-1} \to I_i/I_{i-1}$ are spectrum preserving.     
\end{definition}

Let $A$ be an affine cellular algebra $A$ with cell chain $0=J_0 \subset J_1 \subset \cdots \subset J_n=A$, such that each affine cell ideal $J_k/J_{k-1}$ has cell datum $(B_k, \sigma_k, \Delta_k,  \psi^{(k)},\alpha_k)$, i.e. $J_k/J_{k-1}\cong \Delta_k \otimes_{B_k}  \Delta_k' \cong \left( M_{d_k}(B_k), \psi^{(k)} \right)$ as $A$-bimodules, where $d_k=\rank_{B_k}(\Delta_k)$. Let $I_k = \asym_n \times   \cdots \times \asym_k$. Suppose no determinant $\det(\psi^{(k)})$ is a zero-divisor, then by Theorem \ref{TheoremAtext}, $\Theta$ is an embedding of $A$ into its asymptotic algebra $\asym = M_{d_n}(B_n)\times \cdots \times M_{d_1}(B_1)$. 
Using Proposition \ref{prop:weakly} we conclude
\begin{theorem}[Theorem \ref{TheoremC}] \label{TheoremCtext}
 Let $A$ be an affine cellular algebra with no determinant $\det(\psi^{(i)})$ being a zero-divisor. Then the embedding $\Theta:A\to \asym$ is weakly spectrum preserving if and only if all affine cell ideals $J_i/J_{i-1}$ are idempotent.
 \end{theorem}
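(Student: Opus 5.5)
For the ``if'' direction I take the filtrations already set up before the statement: the cell chain $0=J_0\subset J_1\subset\cdots\subset J_n=A$ on the source, and $I_k=\asym_n\times\cdots\times\asym_k$ on the target. Since $I_k$ is indexed so that $I_1=\asym$, I reverse it to an increasing filtration by setting $I'_k:=\asym_k\times\cdots\times\asym_1=\ker(\asym\to\asym_n\times\cdots\times\asym_{k+1})$. Then $I'_k/I'_{k-1}\cong\asym_k=M_{d_k}(B_k)$. By Theorem~\ref{TheoremAtext}, $\Theta_l(a)=0$ for $a\in J'_k$ and $l>k$. This needs checking in the direction matching the chosen filtration: I expect $\Theta_l(J_k)=0$ exactly for $l>k$, because $J_k$ kills $J_l/J_{l-1}$ when $l>k$. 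This gives $\Theta(J_k)\subseteq I'_k$. Here $I'_k$ should be the factors with index $\le k$, so I will fix the product ordering consistently.

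The induced map $J_k/J_{k-1}\to I'_k/I'_{k-1}\cong M_{d_k}(B_k)$ is $\bar a\mapsto\Theta_k(a)$. By \eqref{eq:image-theta-k} this equals $\alpha_k(\bar a)\psi^{(k)}$. Hence, after identifying $J_k/J_{k-1}$ with $\widetilde{M_{d_k}(B_k)}$ via $\alpha_k$, the induced morphism is exactly the map $\varphi\colon X\mapsto X\psi^{(k)}$ of Proposition~\ref{prop:weakly}. Note that $\Theta_k$ vanishes on $J_{k-1}$ because $J_{k-1}$ kills $J_k/J_{k-1}$, so the map is well defined. It remains to check that $\Theta_k(a)$ for $a\in J_k$ is given by \eqref{eq:image-theta-k}; this holds because $ae_k$ is computed inside the generalised matrix ring via \eqref{eq2}. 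Proposition~\ref{prop:weakly} then says this is spectrum preserving precisely when $J_k/J_{k-1}$ is idempotent, which gives the ``if'' direction.

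The ``only if'' direction is the main obstacle, because weak spectrum preservation only asserts the existence of \emph{some} filtrations, not necessarily the cell chain. My first attempt is to interpret the statement relative to the natural filtrations (the cell chain and $I'_k$), as the paper's setup suggests. Then the converse follows directly from the ``only if'' half of Proposition~\ref{prop:weakly}: if some $J_k/J_{k-1}$ is not idempotent, the $k$-th induced map is not spectrum preserving.

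If a filtration-independent statement is required, I would argue via primitive ideals. Take a maximal ideal $\mathfrak m\supseteq$ (entries of $\psi^{(k)}$) as in the proof of Proposition~\ref{prop:weakly}. The simple $\asym$-module $(B_k/\mathfrak m)^{d_k}$ restricts to an $A$-module on which $J_k$ acts as zero. I would then try to show that this produces either a collision or a non-containment in $\Prim$ that no choice of filtrations can repair. I expect that step to be delicate, and I would settle for the interpretation with the fixed filtrations.
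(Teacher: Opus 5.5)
Your argument is essentially the paper's. You take the cell chain on $A$ and the product filtration on $\asym$, identify each induced map $J_k/J_{k-1}\to\asym_k$ with $\varphi\colon X\mapsto X\psi^{(k)}$ via $\alpha_k$ and \eqref{eq:image-theta-k}, and apply Proposition~\ref{prop:weakly} in both directions. Your re-indexing to the increasing filtration $I'_k=\asym_k\times\cdots\times\asym_1$, with $\Theta_l(J_k)=0$ exactly for $l>k$, is the correct reading of the paper's reversed $I_k$ and of the mis-stated vanishing in Theorem~\ref{TheoremAtext}. The paper likewise proves the converse only relative to these fixed filtrations, so settling for that interpretation matches its proof.
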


 Now, \cite[Theorem 9]{BN} implies invariance of periodic cyclic homology
 under a general assumption in \cite{BN}. 
 
\begin{corollary}\label{pch}
  Let $A$ be an affine cellular algebra with no determinant $\det(\psi^{(i)})$
  being a zero-divisor and all $J_i/J_{i-1}$ being idempotent.
  If  $A$ is
  a finitely generated algebra over an affine $\mathbb{C}$-algebra, then the
  embedding $\Theta$ induces
 an isomorphism of periodic cyclic homology. 
\end{corollary}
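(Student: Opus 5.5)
The plan is to apply \cite[Theorem 9]{BN} directly. Its hypothesis is a weakly spectrum preserving morphism between finite type algebras over $\mathbb{C}$, and its conclusion is that the morphism induces an isomorphism in periodic cyclic homology. So we must establish two things: (i) $\Theta\colon A\to\asym$ is weakly spectrum preserving, and (ii) both $A$ and $\asym$ are finite type algebras in the sense of Baum and Nistor.

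Step (i) follows at once from Theorem \ref{TheoremCtext}. By assumption no $\det(\psi^{(i)})$ is a zero-divisor, so Theorem \ref{TheoremAtext} makes $\Theta$ an embedding. Since all $J_i/J_{i-1}$ are idempotent, Theorem \ref{TheoremCtext} then says $\Theta$ is weakly spectrum preserving. The filtrations are the cell chain $J_0\subset\cdots\subset J_n$ on $A$ and $I_k=\asym_n\times\cdots\times\asym_k$ on $\asym$, with the index reversed to make $I$ increasing. We should check that $\Theta(J_k)$ lands in the corresponding ideal of $\asym$. This holds because $\Theta_l(J_k)=0$ for $l<k$, as recorded in Theorem \ref{TheoremAtext}. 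The induced map on subquotients is $\varphi\colon\widetilde{M_{d_k}(B_k)}\to M_{d_k}(B_k)$, $x\mapsto x\psi^{(k)}$, by (\ref{eq:image-theta-k}), and this map is spectrum preserving by Proposition \ref{prop:weakly}.

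Step (ii) is the main obstacle, since Baum and Nistor require the algebras to be finitely generated modules over a finitely generated commutative complex algebra, namely the centre or some central subalgebra. For $\asym$ this is clear: each $M_{d_k}(B_k)$ is finite over $B_k$, and $B_k$ is affine, so $\asym$ is finite over the affine algebra $\prod_k B_k$. For $A$, the statement's hypothesis that $A$ is a finitely generated algebra over an affine $\mathbb{C}$-algebra is meant to supply this. I will interpret it as saying that $A$ is a finitely generated module over an affine commutative $\mathbb{C}$-algebra $Z$ mapping into the centre of $A$, which is the finite type condition of \cite{BN}. Then \cite[Theorem 9]{BN} applies verbatim and gives that $\Theta_*\colon HP_*(A)\to HP_*(\asym)$ is an isomorphism.

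One subtlety is whether \cite{BN} also needs $\Theta$ to be compatible with the central structures, i.e.\ $Z$-linear. If so, one would try to show that $\Theta$ maps $Z$ into the centre $\prod_k B_k$ of $\asym$. Central elements of $A$ act on each $J_k/J_{k-1}$ by bimodule endomorphisms. Using $\End({}_A\Delta_k)\cong B_k$ from Lemma \ref{lemma12}, such an element should act as a scalar in $B_k$, so $\Theta_k(z)$ is scalar. With this, $\asym$ becomes a finite $Z$-module as well, since $B_k$ is finite over the image of $Z$ by Noetherianity and integrality arguments. I expect this to be the only genuinely delicate point; the remaining steps are direct citations of the results above.
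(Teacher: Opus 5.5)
Your route is the paper's: the corollary is Theorem \ref{TheoremCtext} fed into \cite[Theorem 9]{BN}, with the finite type condition taken from the hypothesis on $A$. The paper leaves implicit that $\asym$ is finite type over the same central subalgebra $Z$ and that $\Theta$ is $Z$-linear; your argument for this is sound. Finiteness actually needs no integrality argument. $J_k/J_{k-1}$ is a finitely generated $Z$-module because $Z$ is Noetherian, and under $\alpha_k$ a central $z$ acts as the scalar $\Theta_k(z)\in B_k$. Hence $M_{d_k}(B_k)$, and so $B_k$, is finitely generated over $Z$.

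One correction in step (i): the vanishing is $\Theta_l(J_k)=0$ for $l>k$, not $l<k$. The reason is that $J_k\subseteq J_{l-1}$, which annihilates $J_l/J_{l-1}$. The inequality in the display of Theorem \ref{TheoremAtext}, and the resulting choice $I_k=\asym_n\times\cdots\times\asym_k$, are slips. For instance, in the Hecke example $C_{s_1}\in J_2$ but $\Theta_1(C_{s_1})=\eta\neq 0$.

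So the compatible filtration is $I_k=\asym_k\times\cdots\times\asym_1$, which is already increasing. Reversing the indices of $\asym_n\times\cdots\times\asym_k$ does not work: $0\neq\Theta(J_1)\subseteq\asym_1$, which is not contained in $\asym_n$ when $n>1$. With the correct $I_k$ one has $I_k/I_{k-1}\cong\asym_k$, and the induced map is $\varphi\circ\alpha_k$ as you say, so your conclusion stands.
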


In \cite{BN}, algebras with these properties are called finite type
algebras. Iwahori-Hecke algebras satisfy this conditions, and in \cite{BN}
the periodic cyclic homology of Iwahori Hecke algebras with parameter
not being zero and not a proper root of unity is determined by
using the embedding $\Theta$ into the asymptotic algebra and its property
of being weakly spectrum preserving. Cellular structures are not used in
\cite{BN}. Corollary \ref{pch} also applies to Brauer algebras and more
generally diagram algebras that are cellular in the sense of
\cite{GrahamLehrer} over an affine ground ring that is a $\mathbb{C}$-algebra.


\section{Examples and methods}

This section aims at relating the general theory of affine cellular
algebras to classes of examples occurring in applications, in particular
to algebraic Lie theory. This will happen in several respects.
First, an explicit example illustrating the concepts will get worked out
in detail.  Then we will identify our abstractly obtained embedding of $A$ into $\hat{A}$ with
Lusztig's embedding in the case of extended affine Hecke algebras of type $A$.  And finally we will summarise methods used in applications to compute Gram determinants, which
are determinants of swich matrices. This shows that in situations of
interest, these determinants often - under mild assumptions even generically -
are not zero-divisors, thus justifying assumptions of the previous
sections. 

\subsection{An explicit example}
\newcommand{\gs}{s_1}
\newcommand{\gt}{s_2}
\newcommand{\Cs}{C_{\gs}}
\newcommand{\Ct}{C_{\gt}}
\newcommand{\Cst}{C_{\gs\gt}}
\newcommand{\Cts}{C_{\gt\gs}}
\newcommand{\Csts}{C_{\gs\gt\gs}}

\newcommand{\Ts}{t_{\gs}}
\newcommand{\Tt}{t_{\gt}}
\newcommand{\Tst}{t_{\gs\gt}}
\newcommand{\Tts}{t_{\gt\gs}}
\newcommand{\Tsts}{t_{\gs\gt\gs}}

This section starts by providing a concrete example of the Hecke algebra $H$
of type $A_2$, which is cellular over $R=\ZZ[q^{\pm 1/2}]$,  i.e.\!\!\! of the
symmetric group $W=\Sigma_3 = \{ id, \gs, \gt, \gs\gt,\gt\gs,\gs\gt\gs \}$,
where $\gs=(12)$ and $\gt=(23)$. The Kazhdan-Lusztig basis of $H$ is
$\{C_{id}, \Cs, \Ct, \Cst, \Cts, \Csts \}$
with $C_{id}$ being the identity element.
There are exactly three two-sided cells, $\mathcal{C}=\{ \{C_{id}\},
\{\Cs,\Ct,\Cst,\Cts\}, \{\Csts\}\}$ and the cellular structure of $H$ is given
by $H=J_1\oplus J_2' \oplus J_3'$, where
$$J_1 = R\Csts, \qquad J_2' = R\Cs\oplus R\Ct \oplus R\Cst \oplus R\Cts,
\qquad J_3'= RC_{id}.$$
The cell chain is given by $J_1 \leq J_2:=J_1\oplus J_2' \leq J_3:=
J_2\oplus J_3'=H$.
The multiplication of the Kazhdan-Lusztig basis is determined by $ C_w C_u =
\sum_{v\in W} h_{w,u,v} C_v$
for elements $h_{w,u,v}\in R$ given by the following multiplication table
(omitting products with the identity $C_{id}$), where we set
$\eta := -(q^{1/2}+q^{-1/2}).$
\begin{figure}[h]
$$\begin{array}{|c||c|c|c|c|c|}\hline
     &   \Cs        & \Ct          & \Cst          & \Cts                  & \Csts\\\hline\hline
\Cs  &  \eta \Cs     & \Cst         & \eta \Cst      & \Cs + \Csts           & \eta \Csts \\\hline
\Ct  &  \Cts        & \eta\Ct      & \Ct + \Csts       & \eta \Cts              & \eta \Csts \\\hline
\Cst &  \Cs + \Csts & \eta\Cst     & \Cst + \eta \Csts  & \eta \Cs + \eta \Csts   & \eta^2 \Csts \\\hline
\Cts & \eta\Cts     & \Ct + \Csts & \eta \Ct + \eta \Csts & \Cts + \eta \Csts & \eta^2 \Csts \\\hline
\Csts & \eta \Csts   & \eta\Csts  & \eta^2 \Csts  & \eta^2\Csts    & (\eta^3-\eta) \Csts \\\hline
\end{array}$$
\caption{Multiplication table for the KL-basis element of the  Hecke algebra of $\Sigma_3$}\label{KL-mult}
\end{figure}

\medskip

The basis elements of $J_2/J_1$ can be written (and ordered) as the cosets
$E_{11} = \Cs + J_1$, $E_{21} = \Cts + J_1$, $E_{12} = \Cst + J_1$,
$E_{22}=\Ct + J_1$ and the multiplication in $J_2/J_1$ is given by 
\[
\begin{array}{|c||c|c|c|c|}\hline
     &   E_{11}        & E_{22}          & E_{12}          & E_{21}                 \\\hline\hline
E_{11}  &  \eta E_{11}     & E_{12}         & \eta E_{12}      & E_{11}             \\\hline
E_{22}  &  E_{21}        & \eta E_{22}      & E_{22}   & \eta E_{21}               \\\hline
E_{12} &  E_{11} & \eta E_{12}     & E_{12}   & \eta E_{11}    \\\hline
E_{21} & \eta E_{21}     & E_{22} & \eta E_{22}  & E_{21}   \\\hline
\end{array}
\]
Identifying $E_{ij}$ with the matrix units of $M_2(R)$, we see that $J_2/J_1
\cong  (M_2(R),\psi)$, where
$\psi = \left(\begin{array}{cc}
                     \eta & 1 \\ 1 & \eta
              \end{array}\right)$. Note that the determinant of $\psi$
            is $\eta^2-1$, hence non-zero and $1$ belongs to the ideal
            generated by the entries of $\psi$, hence $J_2/J_1$ is an
            idempotent ideal of $H/J_1$.
Interestingly, the swich matrix of $J_1$ is $(\eta^3-\eta)$, which is 
not a zero-divisor, but also not invertible, hence $J_1$ is not idempotent in $H$.

Our embedding $H \to \widehat{H} = R \times M_2(R) \times R$ can be calculated
iteratively.
First we consider $$\Theta_1: H \to H/J_1 \times \mathrm{End}((J_1)_{H}) \to
H/J_1 \times R,$$ by sending an element $h\in H$ to $(h+J_1, \lambda_h^1)$,
where $\lambda_h^1$ denotes the left multiplication by $h$ on $J_1$. The
isomorphism $\mathrm{End}({(J_1)}_{H})\cong R$ is obtained by using the
isomorphism $\alpha_1:J_1\to R$ given by $\alpha_1(\Csts) = 1$. Then an
endomorphism $f\in \mathrm{End}({(J_1)}_{H})$ is mapped to
$\alpha_1(f(\alpha_1^{-1}(1))) = \alpha_1(f(\Csts))$. Combined this gets us to
$$\Theta_1(h) = (h+J_1, \alpha_1(h\Csts)), \qquad \forall h\in H.$$
So the values of the second component of $\Theta_1(C_{w})$ are given by the
scalars $1$, $\eta$, $\eta^2$ or $\eta^3-\eta$ of the last column of the
multiplication table (Figure \ref{KL-mult}).
Considering $H/J_1$ we have the embedding
$$\Theta_2: H/J_1 \to H/J_2 \times \mathrm{End}((J_2/J_1)_{H/J_1}) \to R
\times M_2(R).$$
For an element $E_{ij}$ of the basis of $J_2/J_1$ the embedding in the second
component is given by multiplication with $\psi$, which we can write
symbolically as
$$\left(\begin{array}{cc}
         E_{11} & E_{12} \\ E_{21} & E_{22}
        \end{array}\right)
        \mapsto
        \left(\begin{array}{cc}
         E_{11} & E_{12} \\ E_{21} & E_{22}
        \end{array}\right)
        \left(\begin{array}{cc}
         \eta & 1 \\ 1 & \eta
        \end{array}\right)
        =
        \left(\begin{array}{cc}
         \eta E_{11} + E_{12} & E_{11} + \eta E_{12} \\ \eta E_{21}+E_{22} & E_{21}+\eta E_{22}
        \end{array}\right),$$
      where each entry on the left  is mapped to the corresponding entry on
      the right. Since $C_{id}$ is the identity element, it will be mapped to the
      identity matrix. Altogether we have obtained a map 
        \begin{equation}
          \Theta = (\Theta_2 \times id)\circ \Theta_1: H \to \widehat{H} =
          R \times M_2(R) \times R
        \end{equation} given by
\begin{eqnarray*}
 \Theta(C_{id})  &=& \left( 1, E_{11}+E_{22}, 1 \right) \\
 \Theta(\Cs)  &=& \left( 0, \eta E_{11} + E_{12}, \eta \right)\\
 \Theta(\Ct)  &=& \left( 0, E_{21}+\eta E_{22}, \eta \right) \\
 \Theta(\Cst)  &=& \left( 0, E_{11} + \eta E_{12}, \eta^2 \right) \\
 \Theta(\Cts)  &=& \left( 0, \eta E_{21}+E_{22}, \eta^2 \right) \\
 \Theta(\Csts)  &=& \left( 0, \mathbf{0}, \eta^3-\eta \right) \end{eqnarray*}

In comparison, we will explicitly determine Lusztig's embedding $\varphi$.
According to \cite[p.538]{Lusztig2}, the $a$-function is determined by the
multiplication table in Figure \ref{KL-mult}. For $v\in W$ one has
$$a(v) = \min \left\{ i\geq 0 :   (-q^{1/2})^i \: h_{w,u,v}
  \in R^+,  \:  \: \forall w,u\in W  \right\},$$
where $R^+ = \Z[q^{1/2}]$.
Note that according to Figure \ref{KL-mult},  $h_{w,u,v} \in \{0,1,\eta,\eta^2,
\eta^3-\eta\}$ and that the least $i\geq 0$ that puts $\eta, \eta^2,
\eta^3-\eta$  into $R^+$ are $1,2,3$ respectively.
Hence looking at table \ref{KL-mult}, we see
$$a(w)=1,\: \forall w\neq \{id, \gs\gt\gs\}, \qquad  a(\gs\gt\gs)=3, \qquad
a(id)=0.$$
According to \cite[p.538]{Lusztig2}, the integers  $\gamma_{w,u,v}$ are
determined by
$$ \gamma_{w,u,v} = \left(-q^{1/2}\right)^{a(v)} h_{w,u,v^{-1}} \:
(\mathrm{mod}\: q^{1/2}R^+)$$
If $h_{w,u,v^{-1}}\in\{0,1\}$, then $\gamma_{w,u,v}=0$.  In particular, for
$v\neq \gs\gt\gs$,
$\gamma_{w,u,v} = 1$ if and only if $h_{w,u,v}=\eta$, otherwise it is zero. For
$v=\gs\gt\gs$, $\gamma_{w,u,v} = 1$ if and only if $w=u=\gs\gt\gs$, otherwise
it is zero. Since the element $C_{1}$ acts as an identity element,
$h_{id,u,v}= \delta_{u,v}$, $h_{w,id,v} = \delta_{w,v}$ and $h_{w,u,id}=
\delta_{w,id}\delta_{u,id}$.
According to \cite[2.3]{Lusztig2}, the \emph{base ring} $B$ is defined as the
free $\Z$-module with basis $\{t_w : w\in W\}$ with multiplication $ t_wt_u =
\sum_{v\in W} \gamma_{w,u,v} t_{v^{-1}}.$
Hence we obtain the following multiplication table of $B$:
$$\begin{array}{|c||c||c|c|c|c||c|}\hline
          &  t_{id} & \Ts        & \Tt          & \Tst          & \Tts                  & \Tsts\\\hline\hline
t_{id}    & 1  & 0        & 0          & 0          & 0                  & 0\\\hline\hline
     \Ts  &  0 &\Ts     & 0         & \Tst      & 0           & 0 \\\hline
\Tt  &  0& 0        & \Tt      & 0   & \Tts              & 0 \\\hline
\Tst &  0&0 & \Tst     & 0  & \Ts   & 0 \\\hline
\Tts & 0&\Tts     & 0 & \Tt  & 0 & 0 \\\hline\hline
\Tsts &0& 0   & 0  & 0  & 0    & \Tsts \\\hline
\end{array}$$

This shows, $B \cong \Z \times M_2(\Z) \times \Z$ and
$B\otimes_{\mathbb{Z}} R = \widehat{H}$, where 
$$t_{id} \mapsto (1, \mathbf{0}, 0), \qquad \Tsts \mapsto (0,\mathbf{0},1)$$
$$ \Ts \mapsto (0,E_{11},0), \qquad  \Tt \mapsto (0,E_{22},0), \qquad \Tst
\mapsto (0,E_{12},0), \qquad  \Tts\mapsto (0,E_{21},0).$$
The distinguished involutions $D$ of $W$ are defined as $D = \{ u\in W :
a(u)=l(u)-2\delta(u)\}$, where $\delta(u)$ is the degree of the KL-Polynomal
$P_{e,u}$ as a polynomial in $q^{-1/2}$. In our case, all KL-polynomials are
constant, as for all dihedral groups, see \cite[7.12(a)]{humphreys}.
Hence $\delta(u)=0$ and using the values of the $a$-function, we
obtain 
$$  D = \{ u\in \Sigma_3 : a(u)=l(u)\} = \{ id, \gs, \gt, \gs\gt\gs\}.$$
Following \cite[2.4]{Lusztig2}, the embedding of $H$ into $B$ is defined as
the left $R$-linear map $\varphi:H \to B\otimes_{\mathbb{Z}} R$ determined by 
$$\varphi(C_w) = \sum_{d\in D, v\in W, a(d)=a(v)} h_{w,d,v}\: t_v,$$
where the sum runs over precisely the pairs of the set
$$\{ (id,id), (\gs,\gs),(\gs,\gt), (\gs,\gs\gt), (\gs, \gt\gs), (\gt,\gs), (\gt,\gt), (\gt, \gs\gt), (\gt,\gt\gs), (\gs\gt\gs, \gs\gt\gs)\}.$$
Going through the multiplication table in Figure \ref{KL-mult}, we find by inspection $\varphi = \Theta$:
\begin{eqnarray*}
\varphi(C_{id}) &=& t_{id} + \Ts + \Tt + \Tsts \mapsto (1, E_{11}+E_{22}, 1)\\
\varphi(C_{\gs}) &=& \eta  \Ts + \Tst + \eta \Tsts \mapsto (0, \eta E_{11} + E_{12}, \eta)\\
\varphi(C_{\gt}) &=& \eta  \Tt + \Tts + \eta \Tsts \mapsto (0, \eta E_{22} + E_{21}, \eta)\\
\varphi(C_{\gs\gt}) &=&   \Ts + \eta \Tst + \eta^2 \Tsts \mapsto (0, E_{11} + \eta E_{21}, \eta^2)\\\
\varphi(C_{\gt\gs}) &=&   \Tt + \eta \Tts + \eta^2 \Tsts \mapsto (0, E_{22} + \eta E_{21}, \eta^2)\\\
\varphi(C_{\gs\gt\gs}) &=&   (\eta^3-\eta) \Tsts  \mapsto (0, \mathbf{0}, (\eta^3-\eta))\\
\end{eqnarray*}

\subsection{Extended affine Hecke algebra of type \emph{A}}

An extended affine Hecke algebra $H$ of type $A_{n-1}$ is an affine cellular algebras, as shown in 
\cite[Section 5]{KXaffine}. 
 We will briefly revise its affine cellular structure. Let $W$ be an extended Weyl group of type $A_n$ (with Coxeter system $(W,S)$) and let $\{C_w:w\in W\}$ be the Kazhdan-Lusztig basis for the Hecke algebra $H$ of $W$ over $R=\Z[q^{\pm 1/2}]$ as in \cite{KazhdanLusztig}. Then 
\begin{equation}
C_wC_u=\sum_{z\in W}h_{w,u,z}C_z
\end{equation}
for some $h_{w,u,z}\in R$.  Let ${\mathcal L}$ be Lusztig's asymptotic Hecke algebra of $(W,S)$ as in \cite{Lusztig2}. This is the free $\Z$-algebra with basis $\{t_w:w\in W\}$ and multiplication given by
\begin{equation}
t_wt_u=\sum_{z\in W} \gamma_{w,u,z^{-1}}t_z
\end{equation}
for some $\gamma_{w,u,z}$.
Let ${\cB}=R\otimes_{\Z} {\mathcal L}$ be Lusztig's asymptotic Hecke algebra with coefficients in $R$.  The \emph{asymptotic Hecke algebra of a cell $c$} is the free $R$-submodule $\cB_c$ of $\cB$ generated by $t_w$, for  $w\in c$, which is a two-sided ideal in $\cB$ with identity $\sum_{d\in D_c} t_d$, 
where $D$ is the set of distinguished involutions of $W$ and 
$D_c = D \cap c$.
Moreover, $\cB$ is a direct product of the $\cB_c$'s, where $c$ ranges over all two-sided cells of $W$ (see \cite[p.175]{KXaffine} or  \cite{Xi}).
By \cite[8.2]{Xi}, ${\cB}_c$ is isomorphic to a matrix ring with entries in a commutative $\Z$-affine algebra.

For a two-sided cell $c$ consider $H_{\leq c}$, the subspace of $H$ generated by $C_w$ with $w\in c' \leq c$. Similarly, define 
$H_{<c}$ and $H_c=H_{\leq c}/H_{<c}$, which is a (possibly non-unital) $R$-algebra with basis $\{[C_w]: w\in c\}$ and multiplication given by
\begin{equation}
[C_w][C_u]=\sum_{z\in c}h_{w,u,z}[C_z]
\end{equation}

As $\left\{[C_w]: w\in c\right\}$ is an $R$-basis of $H_c$, there is a bijection $\alpha_c: H_c \longrightarrow {\cB}_c$ such that $\alpha_c([C_w])=t_w$, which extends to an $(R,R)$-isomorphism. Thus, the $H$-bimodule structure of $H_c$ and $\alpha_c$ induce an $H$-bimodule structure on $B_c$ as follows: for any  $u\in W, w\in c$ we set
\begin{equation}
 t_w\cdot C_u := \alpha_c([C_wC_u]) = \sum_{z\in c} h_{w,u,z} \alpha_c([C_z]) = \sum_{z\in c} h_{w,u,z} t_z.
\end{equation}
Similarly, $C_u\cdot t_w :=\sum_{z\in c} h_{u,w,z}t_z$.
Hence, $\alpha_c$ is an $(H,H)$-bimodule homomorphism and we conclude, using \cite[Lemma 5.5]{KXaffine}, for  $u, w\in c$:
\begin{equation}\label{eq:hecke-mult-cell}
\alpha_c([C_w][C_u]) 
= t_{w}\cdot[C_u]
= t_w\left(\sum_{d\in D_c} t_d \cdot \sum_{d\in D_c} [C_d]\right)t_u.
\end{equation}
Define $\psi^c := \sum_{d\in D_c} t_d \cdot \sum_{d\in D_c} [C_d]\in B_c$, then for any $w,u\in c$:
\begin{equation}\label{eq:mult-c-t}\alpha_c([C_w][C_u]) = t_w\psi^c t_u = \alpha_c([C_w])\psi^c \alpha_c([C_u]).\end{equation}
Hence $H_c\cong (\cB_c,\psi^c)$ with $\cB_c$ being a matrix ring over an affine commutative ring.

Following \cite{Xi}, Lusztig's embedding is defined as follows:

\begin{definition}
  The injective $R$-algebra homomorphism,
$$\varphi: H \to \cB, \qquad  C_w \mapsto  \sum_{z\in W} \sum_{\substack{d\in D\\ a(z)=a(d)}} h_{w,d,z}t_z$$
is called {\em Lusztig's embedding}.
Here, $a$ is Lusztig's $a$ function $a:W\to \Z$ (see \cite{Xi}).
\end{definition}

Consider $\pi_c:\cB \to {\cB}_c$,  the projection onto  the $c$-component of $\cB$, and the  $R$-algebra homomorphism $\pi_c\varphi: H\to {\cB}_c$. Since $\pi_c \varphi(H_{<c})=0$, there is a well-defined $R$-algebra homomorphism $\varphi_c: H_c \to  {\cB}_c$ given by \begin{equation}
\varphi_c\left([C_w]\right) =  \sum_{z\in c}\sum_{d\in D_c} h_{w,d,z}t_z,
\end{equation}
for $w\in c$.
Note that if $w,u\in c$, then $a(w)=a(u)$ \cite[Lemma 5.1]{KXaffine}, and furthermore, by  \cite[Lemma 5.1(3)]{KXaffine}, $z\leq_L d$, as $a(d)=a(z)$, $d{\mathtt{\sim}}_L z$.

The $H$-bimodule structure on $\cB_c$ yields also a $({\cB}_c, H_c)$-bimodule via
\begin{equation}
t_w\cdot [C_u] :=\sum_{z\in c} h_{w,u,z}t_z
\end{equation}
for $w,u\in c$, and we can rewrite $\varphi_c([C_w])$ as 
\begin{equation}\label{eq:varphi-formula}
\varphi_c([C_w])=t_w\cdot \sum_{d\in D_c} [C_d].
\end{equation}
Since $\sum_{d\in D_c} t_d$ is the identity in ${\cB}_c$, we have for any $w\in c$:
\begin{equation}
\varphi_c([C_w])=t_w\left(\sum_{d\in D_c} t_d \cdot \sum_{d\in D_c} [C_d]\right) = t_w \psi^c.
\end{equation}

\begin{lemma} \label{lemma:phic-injective}
Consider Lusztig's embedding $\varphi: H \to \cB$ and a two-sided cell $c$. Then the corresponding map 
$\varphi_c: H_c \to {\cB}_c$ given by 
$$[C_w]  \mapsto \displaystyle \sum_{d\in D_c} \sum_{z\in c}  h_{w,d,z}t_z = t_w \psi^c$$
is injective. In particular, $\psi^c$ is not a zero-divisor in $\cB_c$.
\end{lemma}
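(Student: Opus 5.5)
Since Lusztig's embedding $\varphi: H\to\cB$ is injective, I would derive the injectivity of $\varphi_c$ from it by a filtration argument, and then read off the statement about $\psi^c$ from the formula $\varphi_c([C_w]) = t_w\psi^c$.

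\textbf{Step 1: reduce to a triangularity statement.} Suppose $x\in H_{\leq c}$ with $\varphi_c(x+H_{<c})=0$, i.e.\ $\pi_c\varphi(x)=0$. I want to show $x\in H_{<c}$. Since the cells indexing $H_{\leq c}$ are those $c'\leq c$, the key claim is that $\pi_{c'}\varphi(H_{\leq c})=0$ unless $c'\leq c$. This should follow from the formula for $\varphi(C_w)$: a nonzero coefficient $h_{w,d,z}$ forces $z\leq_{L} d$ and $z\leq_R w$ in the cell preorder, so $z$ lies in a cell $\leq$ the cell of $w$.

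\textbf{Step 2: the leading coefficients.} Write $x=\sum_{w\in c}r_w C_w + y$ with $y\in H_{<c}$. Then $\pi_c\varphi(x)=\pi_c\varphi(\sum r_wC_w)$, because $\pi_c\varphi(H_{<c})=0$ as noted before the lemma. The task is to show that $\pi_c\varphi$ restricted to the span of $\{C_w: w\in c\}$ is injective. For this I would use Lusztig's inverse relation: $\varphi$ becomes an isomorphism after the base change $R\to\mathbb{Q}(q^{1/2})$ (or, more intrinsically, the component-wise structure $\cB=\prod_c\cB_c$). If it does, then $\varphi\otimes\mathbb{Q}(q^{1/2})$ respects the filtrations by $H_{\leq c}$ and $\prod_{c'\leq c}\cB_{c'}$, and the induced maps on graded pieces are isomorphisms. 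Hence each $\varphi_c\otimes\mathbb{Q}(q^{1/2})$ is injective. Since $H_c$ is a free $R$-module and $R$ is a domain, $\varphi_c$ itself is injective. An alternative route avoids the base change: compare $a$-degrees, using $\gamma_{w,d,z}$ as the leading coefficient of $(-q^{1/2})^{a(z)}h_{w,d,z}$. The resulting matrix of $\varphi_c$ modulo lower $q$-powers is $t_w\mapsto t_w\sum_d t_d=t_w$, the identity, so $\varphi_c$ has unitriangular leading term and is injective.

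\textbf{Step 3: the swich matrix.} From $\varphi_c([C_w])=t_w\psi^c$ and the fact that $\alpha_c$ is an $R$-isomorphism, injectivity of $\varphi_c$ means that $X\mapsto X\psi^c$ is injective on $\cB_c$, so $\psi^c$ is not a right zero-divisor. By \cite{Xi}, $\cB_c$ is a matrix ring $M_m(B)$ over a commutative affine ring $B$. Taking $X$ to be the adjugate-type matrix, if $\det\psi^c$ were a zero-divisor with $b\det\psi^c=0$ for some $b\ne0$, then $X=b\,\mathrm{adj}(\psi^c)$ would give $X\psi^c=0$. So $X=0$, which by McCoy's theorem (or \cite[Theorem 9.1]{Brown}, as used in Lemma \ref{lemma1}) gives the two-sided conclusion.

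The main obstacle is Step 2: justifying rigorously that the leading-term (mod $q^{1/2}R^+$) argument gives injectivity over $R=\Z[q^{\pm1/2}]$. I would first try the degree argument: take a nonzero $\sum r_w t_w$ in the kernel, pick the maximal $q$-degree among the coefficients, and show it survives in $\varphi_c$ by means of $\gamma_{w,d,z}$ and the identity $\sum_d\gamma_{w,d,z^{-1}}=\delta_{w,z}$ on the cell.
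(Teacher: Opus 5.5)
Your ``alternative route'' in Step 2, which you single out again in your last paragraph, is exactly the paper's proof. Take $0\neq g=\sum_{x\in c}p_x[C_x]$ in $\mathrm{Ker}(\varphi_c)$. Rescale by a power of $u=-q^{1/2}$ so that all $u^bp_x\in\Z[u]$ and $\lambda_{x_0}=(u^bp_{x_0})(0)\neq0$ for some $x_0$. Take the $t_{x_0}$-coefficient of $\varphi_c(g)=0$, multiply by $u^{a+b}$ ($a$ the value of the $a$-function on $c$) and evaluate at $u=0$. This gives $\sum_x\lambda_x\sum_{d\in D_c}\gamma_{x,d,x_0^{-1}}=\lambda_{x_0}=0$, a contradiction. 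The identity $\sum_{d\in D_c}\gamma_{x,d,z^{-1}}=\delta_{x,z}$ used here is just $t_x\sum_{d\in D_c}t_d=t_x$ in $\cB_c$.

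One detail: $\gamma$ is the coefficient of the \emph{lowest} power $u^{-a}$ of $h$, so the natural normalisation is at the lowest $u$-power of the $p_x$. Your ``maximal degree'' variant also works, but only because $h_{x,y,z}$ is bar-invariant (since the $C_w$ are), so its top coefficient equals its bottom one.

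The base-change route, on the other hand, should be discarded. Injectivity of a filtered map does not pass to its associated graded pieces, so you need the extra input that $\varphi\otimes\mathbb{Q}(q^{1/2})$ is an isomorphism. That is false for the extended affine Hecke algebra, although it holds for finite Weyl groups. A ring isomorphism preserves central idempotents. By Bernstein's theorem the centre of $H\otimes\mathbb{Q}(q^{1/2})$ is a ring of $W_0$-invariant Laurent polynomials, hence a domain. By contrast, $\cB\otimes\mathbb{Q}(q^{1/2})=\prod_c\cB_c\otimes\mathbb{Q}(q^{1/2})$ has the nontrivial central idempotents $\sum_{d\in D_c}t_d$ as soon as there are two two-sided cells. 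With that route gone, Step 1 is superfluous: $\varphi_c$ is already defined on $H_c$ because $\pi_c\varphi(H_{<c})=0$, and only the span of the $[C_w]$, $w\in c$, matters.

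In Step 3, the adjugate computation alone does not conclude, since $b\,\mathrm{adj}(\psi^c)=0$ is no contradiction. Apply McCoy's theorem directly instead. If $\det\psi^c$ were a zero-divisor, there would be a nonzero row vector $w$ with $w\psi^c=0$. The matrix with first row $w$ and all other rows zero then contradicts injectivity of $X\mapsto X\psi^c$. Consequently $\psi^cY=0$ forces $\det(\psi^c)Y=\mathrm{adj}(\psi^c)\psi^cY=0$, so $Y=0$. This yields the two-sided statement, which the paper only asserts from the one-sided one.
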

\begin{proof}
Recall, that  $R=\Z[u^{\pm 1}]$, for  $u=-q^{1/2}$. 
For any non-zero Laurent polynomial $p\in R$, there exists a smallest $b\in \ZZ$ such that 
\begin{equation}
u^b p \in \Z[u].
\end{equation}
Since $p\neq 0$, the constant term $(u^b p)(0)$ of $u^bp$ is non-zero.
According to \cite[p.538]{Lusztig2}, for any $z\in W$,  $a(z)$ is the smallest integer such that $u^{a(z)}h_{x,y,z}\in \Z[u]$, for all $x,y\in W$. Moreover, $\gamma_{x,y,z}$ is defined  as the constant term of $u^{a(z)}h_{x,y,z}$:
\begin{equation}\label{def:gamma}\gamma_{x,y,z} = \left(u^{a(z)}h_{x,y,z^{-1}}\right)(0)
\end{equation}

Suppose there exists a non-zero element  $g = \sum_{x\in c} p_x [C_x] \in \mathrm{\Ker{\varphi_c}}$. Let $b\in \Z$ be an integer, such that  $u^bp_x \in \Z[u]$, for all $x\in c$, and such that there exists at least one $x_0\in c$ such that $(u^bp_{x_0})(0)\neq 0$. Choose such element $x_0$ and set $\lambda_x=(u^bp_x)(0)$, for all $x\in c$.

Since $\varphi_c(g)=0$, equation (\ref{eq:varphi-formula}) implies:
\begin{equation}
0=\sum_{x\in c} p_x \varphi_c([C_x]) = \sum_{x\in c} p_x t_x\cdot \sum_{d\in D_c} [C_d] 
= \sum_{z\in c} \left( \sum_{x\in c}  \sum_{d\in D_c} p_x h_{x,d,z}\right) t_z.
\end{equation}
Since the elements $t_z$ are linearly independent over $R$, we have for  $z=x_0$:
\begin{equation}\label{eq:conclusion}
 \sum_{x\in c}  \sum_{d\in D_c} p_x h_{x,d,x_0} = 0.
\end{equation}
Note that the $a$-function is constant on the cell $c$. Hence there exists a number $a$ such that $a(x)=a$, for any $x\in c$. Multiplying equation (\ref{eq:conclusion}) by $u^{a+b}$ and evaluating at $0$, yields
\begin{equation}\label{eq:conclusion2}
 \sum_{x\in c}  \lambda_x \left(\sum_{d\in D_c} \gamma_{x,d,x_0^{-1}}\right) = 0,
\end{equation}
using $\lambda_x=(u^bp_x)(0)$ and   $\gamma_{x,d,x_0^{-1}} = (u^ah_{x,d,x_0})(0)$.
By \cite[page 543]{Lusztig2}, $\left(\sum_{d\in D_c} \gamma_{x,d,x_0^{-1}}\right)=1$ if and only if $x=x_0$ and otherwise $0$. Hence equation (\ref{eq:conclusion2}) implies $\lambda_{x_0}=0$, which contradicts the choice of $x_0$. Therefore, $\varphi_c$ must be injective and since $\varphi_c([C_w]) = t_w\cdot \psi^c$ and $\{[C_w] : w\in c\}$ is a basis of $H_c$, $\psi^c$ is not a zero-divisor.
\end{proof}

We will compare Lusztig's embedding with the embedding
obtained in Theorem \ref{TheoremAtext}, using Lemma
\ref{lemma:phic-injective}.

\begin{proposition} \label{embeddingscoincide}
  Let $H$ be the extended affine Hecke algebra of type $A_{n-1}$. Then Lusztig's embedding $\varphi:H\to \cB$ and the embedding $\Theta:H\to \cB$ of Theorem \ref{TheoremAtext} coincide.
\end{proposition}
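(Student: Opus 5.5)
We compare $\varphi$ and $\Theta$ cell by cell. Both are $R$-algebra homomorphisms $H\to\cB=\prod_c\cB_c$, so it suffices to show $\pi_c\varphi=\pi_c\Theta$ for every two-sided cell $c$.

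First we fix the affine cell chain. Choose a linear refinement of the partial order on two-sided cells. The ideals $H_{\le c}$ then give the cell chain, with subquotients $H_c\cong(\cB_c,\psi^c)$ via $\alpha_c([C_w])=t_w$, as in (\ref{eq:mult-c-t}). By Lemma \ref{lemma:phic-injective}, $\psi^c$ is not a zero-divisor in $\cB_c$. Its determinant is therefore not a zero-divisor: $\cB_c$ is a matrix ring $M_{d}(B_c)$ by \cite[8.2]{Xi}, and we apply Lemma \ref{lemma1}(b)$\Leftrightarrow$(c). So Theorem \ref{TheoremAtext} applies. The component of $\Theta$ at $c$ is
$$\Theta_c(h)=\alpha_c(h e_c), \qquad e_c=\alpha_c^{-1}(\mathbf 1_c), \qquad \mathbf 1_c=\sum_{d\in D_c}t_d .$$
Since $\alpha_c$ is an $H$-bimodule map, $\Theta_c(h)=h\cdot\mathbf 1_c$, using the left $H$-action $C_u\cdot t_w=\sum_{z\in c}h_{u,w,z}t_z$ on $\cB_c$.

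Next we compute $\pi_c\varphi(C_w)$ for arbitrary $w\in W$, not only for $w\in c$. By definition,
$$\pi_c\varphi(C_w)=\sum_{z\in c}\sum_{d\in D,\,a(d)=a(z)}h_{w,d,z}t_z .$$
The goal is to show this equals $\sum_{z\in c}\sum_{d\in D_c}h_{w,d,z}t_z=C_w\cdot\sum_{d\in D_c}t_d=\Theta_c(C_w)$. It is enough to show that for $z\in c$, a distinguished involution $d$ with $a(d)=a(z)$ and $h_{w,d,z}\ne0$ must lie in $c$. From $h_{w,d,z}\neq 0$ we get $z\le_R d$. Together with $a(z)=a(d)$, \cite[Lemma 5.1]{KXaffine} (Lusztig's result that $\le_R$ plus equal $a$-value forces $\sim_R$, as already quoted before Lemma \ref{lemma:phic-injective}) gives $z\sim d$, hence $d\in c$. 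Since $\alpha_c$ intertwines the $H$-actions, $C_w\cdot\mathbf 1_c=\alpha_c(C_w e_c)$. This proves $\pi_c\varphi(C_w)=\Theta_c(C_w)$ for all $w$. By $R$-linearity, $\varphi=\Theta$.

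The main obstacle is the middle step for $w\notin c$. For $w$ below $c$, both sides should vanish. For $w$ above $c$, one must make sure that $h_{w,d,z}$ with $d\notin D_c$ genuinely cannot contribute, which is exactly where the $a$-function properties (finiteness of $a$ and the cell-equivalence statement of \cite[Lemma 5.1]{KXaffine}) are needed. A second point to check is the compatibility of conventions. The formula $t_w\cdot C_u$ as right action versus $C_u\cdot t_w$ as left action has to be used consistently with the left multiplication $\lambda_c$ in the definition of $\Theta_c$. Equally, the identification $\End((H_c)_H)\cong\cB_c$ via $f\mapsto\alpha_c(f(e_c))$ has to match the identity $\mathbf 1_c$ of $\cB_c$, as in the recalled proof of \cite[Theorem 3.3]{CKLS}. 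For $w\in c$ this reduces to the identity $\varphi_c([C_w])=t_w\psi^c=\alpha_c([C_w])\psi^c$, which matches (\ref{eq:image-theta-k}) and serves as a consistency check.
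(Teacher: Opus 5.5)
Your proposal is correct and follows the paper's proof. You identify the $c$-component of $\Theta$ as $\alpha_c\bigl(C_w\sum_{d\in D_c}[C_d]\bigr)=\sum_{d\in D_c}\sum_{z\in c}h_{w,d,z}t_z$, and then show that in Lusztig's formula $h_{w,d,z}\neq 0$ together with $a(d)=a(z)$ forces $d$ and $z$ into the same two-sided cell; for the reverse inclusion $D_c\subseteq\{d\in D: a(d)=a(z)\}$ you use, implicitly, that $a$ is constant on cells.

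One slip needs fixing. Since $C_z$ occurs in the left multiple $C_wC_d$ of $C_d$, $h_{w,d,z}\neq0$ gives $z\le_L d$, as the paper states, not $z\le_R d$. Lusztig's $a$-function result then gives $z\sim_L d$, and your conclusion $d\in c$ is unchanged.
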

\begin{proof}
Let $\cC$ be the set of all cells. For $c\in \cC$, $d\in D_c$  and  $w,z\in W$ we have by \cite[p.540]{Lusztig2} that  if $h_{w,d,z}\neq 0$ then $z\leq_L d$ (where $\leq_L$ denotes the left preorder on $W$ as defined in \cite{KazhdanLusztig}). If furthermore $a(d)=a(z)$, then by \cite[2.3.2]{GeckJacon}, we have $d\sim_L z$, which means that $z\in c$. Note also that the $a$-function is constant on cells. Hence we can refine Lusztig's embedding as

\begin{equation}\label{eq:littlePhi}
\varphi(C_w) = \sum_{c\in \cC}  \sum_{d\in D_c} \sum_{z\in c} h_{w,d,z} t_z
\end{equation}

The affine cell structure of $H$ is provided by the ideals $J_c = H_{\leq c}$, which are spanned by elements $C_w$, with $w\in c'\leq c$. The affine cell ideals are $J_c/J_{<c} =  H_{\leq c}/H_{< c} = H_c$ and we have seen that $\alpha : H_c \to \cB_c$, given by $[C_w] \mapsto t_w$ is an $(H,H)$-bimodule isomorphism. 

The embedding considered in Theorem \ref{TheoremAtext}
takes an element $C_w$ into the product of endomorphism rings of the affine cell ideals, i.e.
\begin{equation}
C_w \mapsto ( \lambda^c_{C_w}: H_c \to H_c  )_{c\in \cC} \in \prod_{c\in\cC} \mathrm{End}_H({H_c}),
\end{equation}
where for each $c\in \cC$ and $w\in W$ the map $\lambda^c_{C_w}$ represents the left action of $C_w$ on $H_c$.
We saw  that $\alpha([C_w][C_u]) = \alpha([C_w]) \psi^c \alpha([C_u])$ holds, for all $w,u \in c$, by
equation (\ref{eq:hecke-mult-cell}). In order to ease the notation we will simply write $\psi$ for $\psi^c$. 
The map $H_c \to \cB_c$ given by 
$[C_w] \mapsto \alpha([C_w]) \psi = t_w \psi$ is an injective ring homomorphism from $H_c$ to $\cB_c$ by Lemma \ref{lemma:phic-injective} and also a ring isomorphism between  $H_c$ and the generalised matrix ring $\widetilde{\cB_c}=(\cB_c,\psi)$.
This isomorphism of rings yields  an isomorphism of their endomorphism rings
\begin{equation}
\overline{\alpha}: \: \mathrm{End}_{H_c}({H_c}) \to \mathrm{End}_{\tilde{\cB_c}}({\tilde{\cB_c}}), \qquad f \mapsto \overline{\alpha}(f) :=  \alpha f \alpha^{-1},
\end{equation}
since for any $f \in \mathrm{End}_{H_c}({H_c})$ and $w,u\in c$:
\begin{eqnarray*}
\overline{\alpha}(f)(t_w \psi  t_u) = \alpha\left( f \left( [C_w][C_u] \right)\right)
= \alpha\left( f( [C_w] ) [C_u] \right)
= \alpha\left( f( [C_w] ) \right) \psi \alpha\left([C_u] \right)
= \overline{\alpha}(f)(t_w)\psi t_u
\end{eqnarray*}
holds, showing that $\overline{\alpha}(f)$ is right $\widetilde{\cB_c}$-linear. From \cite[Proposition 2.2]{CKLS} we know that the injective ring homomorphism $l: \cB_c \mapsto \mathrm{End}_{\tilde{\cB_c}}({\tilde{\cB_c}})$ given by $t_w \mapsto l(t_w): [t_u \mapsto t_wt_u]$ is an isomorphism, provided that $\psi$ is not a zero-divisor in $\cB_c$. In this case $l^{-1}: \mathrm{End}_{\tilde{\cB_c}}({\tilde{\cB_c}}) \to \cB_c$ is given by 
$l^{-1}(g)=g(1_{\cB_c})$, for any $g\in\mathrm{End}({\tilde{\cB_c}}_{\tilde{\cB_c}})$.
Since by Lemma \ref{lemma:phic-injective}, $\psi^c$ is not a zero-divisor, we conclude that 
\begin{equation}
l^{-1}\overline{\alpha}:\End_{H_c}({H_c}) \to \cB_c, \qquad f \mapsto \overline{\alpha}(f)(1_{\cB_c}),
\end{equation}
 is an isomorphism of rings. In particular, for  some left multiplication $f=\lambda_{C_w}$, we obtain
 \begin{equation}
 l^{-1}\overline{\alpha}(\lambda_{C_w}) 
 = \alpha\left( C_w \sum_{d\in D_c} [C_d]\right)
 =  \sum_{d\in D_c}  \alpha \left([C_wC_d]\right)
  =  \sum_{d\in D_c}  \sum_{z\in c} h_{w,d,z} t_z. 
 \end{equation}

Hence, we get the composed ring homomorphism

\begin{equation}\label{eq:bigPhi}
\Theta: H 
\to \prod_{c\in\mathcal{C}} \mathrm{End}_H({H_c})  
\to  \prod_{c\in\mathcal{C}} \mathrm{End}_{\widetilde{\cB_c}}(\widetilde{\cB_c}) 
\to  \cB, \qquad C_w \mapsto \Theta(C_w) = \sum_{c\in \mathcal{C}} \sum_{d\in D_c} \sum_{z\in c} h_{w,d,z} t_z. 
\end{equation}

Comparing (\ref{eq:littlePhi}) and (\ref{eq:bigPhi}), we see that $\varphi=\Theta$.
\end{proof}

\subsection{Gram determinants and Jucys Murphy elements}

In the case of extended affine Hecke algebras of type $A$, no determinant
of a swich matrix is a zero-divisor, by Lemma \ref{lemma:phic-injective} and
Proposition \ref{embeddingscoincide}. Thus the assumptions in our main
theorems are satisfied for this class of algebras. For other classes of
algebras these assumptions are known to be satisfied as well. We are
going to provide some references to methods used and results obtained.

In the classical situation of cellular algebras in \cite{GrahamLehrer}, Gram
determinants arise in the following way: A cellular algebra $A$ in
\cite{GrahamLehrer} is
free of finite rank over an integral domain $k$. The cell modules are denoted
by $\Delta_i$. 
The bilinear form $\psi^{(i)}:\Delta_i \times \Delta_i \to k$ is determined by
the matrix $\left( \psi^{(i)}(a_{s1}^i, a_{1t}^i) \right)_{1\leq s,t\leq d_i}$.
Its determinant is called the \emph{Gram determinant} of $\psi^{(i)}$ and
denoted by
$G(i) = \det\left( \psi^{(i)}(a_{s1}^i, a_{1t}^i) \right)_{1\leq s,t\leq d_i}$. 
By definition, these Gram determinants coincide with the determinants of
swich matrices when we view the cellular algebra $A$ as an affine cellular
algebra.

Consider $\Lambda = \{ (i,s) \mid 1\leq i \leq n, \: 1\leq s \leq d_i\}$ with lexicographic order $<$.  In \cite{Mathas}, Mathas introduced an abstract framework of \emph{Jucys-Murphy}-elements for a cellular algebra $A$ over an integral domain $k$. A family of commuting elements $\{L_1, \ldots, L_M\}$ of $A$ is called a family of \emph{$JM$-elements} if they are invariant with respect to the involution $i$ of the cellular algebra $A$, such that
\begin{equation}\label{eq:JM}
L_j a_{st}^i \equiv c_{i,s}(j) a_{st}^i + \sum_{v<s} r_{vt}(j) a_{vt}^i \: (\mathrm{mod} \: J_{i-1}),
\end{equation}
for all $1\leq i \leq n$, $1\leq s,t \leq d_i$ and $1\leq j \leq M$ holds, where $r_{vt}(j)$ and $c_{i,s}(j)$ are elements of $k$.
The $JM$-elements $\{L_1, \ldots, L_M\}$  generated a commutative subalgebra $\cL$ of $A$. From $(\ref{eq:JM})$ we deduce that for each pair $(i,s)\in \Lambda$, there exists a one-dimensional $\cL$-module  $K_{i,s}$, generated by $a_{s1}^i$ modulo $\sum_{v<s} ka_{v1}^i + J_{i-1}$, such that $L_j$ acts on $K_{i,s}$ by the scalar $c_{i,s}(j)$.

The JM-elements are said to fulfil the \emph{separation condition} (see \cite[Definition 2.8]{Mathas}),  if for all pairs $(i,s), (i',s') \in \Lambda$ with $(i,s)<(i',s')$, there exists $1\leq j \leq M$ such that $c_{i,s}(j)\neq c_{i',s'}(j)$.  In other words, $K_{i,s}\not\cong K_{i',s'}$ if  $(i,s)<(i',s')$. In case the JM-elements satisfy the separation condition, Mathas proved in \cite[Corollary 3.9, Theorem 3.12, Proposition 3.14]{Mathas} that the Gram determinant $G(i)$ is a non-zero element of $k$; hence not a zero-divisor in the integral domain $k$.

Suppose now that $A$ admits \emph{Jucys-Murphy}-elements $\{L_1, \ldots, L_M\}$ that satisfy the separation condition, then equation (\ref{eq:JM}) holds. Let $f\in \Hom_A(\Delta_j,\Delta_i)$ be any homomorphism with $i<j$. Passing from $k$ to its fraction field $F$, we have seen that $\Delta_i \otimes F$ has a composition series with composition factors $K_{i,s} \otimes F$. If $f\neq 0$, then $f$ is non-zero on a composition factor $K_{j,t} \otimes F$ of $\Delta_j$ and its image $f(K_{j,t}) \otimes F$ would be isomorphic to a composition factor $K_{i,s} \otimes F$, which is impossible, since $(i,s)<(j,t)$. Hence $\Hom_A(\Delta_j,\Delta_i)=0$, for $j>i$. Together with a result by Graham-Lehrer 
\cite[Proposition 2.6]{GrahamLehrer},
we have $\Hom_A(\Delta_j,\Delta_i)=0$, for $i\neq j$.
Mathas showed that the following cellular algebras over an integral domain $R$ admit JM-elements that satisfy the separation condition, namely the group algebras of symmetric groups, Hecke algebras of type $A$, the Ariki-Koike algebra at parameter  $q\neq 1$ as well as the degenerated Ariki-Koike algebra (see \cite[Theorem 4.10]{Mathas}).

Mathas showed in particular that the Hecke algebras
$H$ of type $A$ over $R=\Z[q^{\pm 1}]$
have Jucys-Murphy elements (see \cite[Example 2.15]{Mathas}) and that the
JM-elements satisfy the separation condition (even for those specialisations
of $q$ such that  $[1]_q[2]_q\cdots [n]_q \neq 0$, for
$[k]_q = 1+q+\cdots + q^{k-1}$, see \cite[Theorem 3.32]{Mathas99}). As
mentioned previously, cellular algebras with JM-elements that
satisfy the separation condition have swich matrices $\psi$ whose Gram
determinant is not a zero-divisor. This argument yields another proof
of the fact for determinants of $\psi^c$ matrices not to be zero-divisors.
\medskip

Using such methods, Gram determinants for several
families of cellular algebras over commutative rings have been determined
by explicit and usually recursive formulae. Examples of such results are
due to Rui and Si for Brauer algebras (\cite[Theorem 4.11]{RuiSibrauer})
and for Birman-Murakami-Wenzl algebras (\cite[Theorem 4.13]{RuiSibmw}) and
also for cyclotomic affine Hecke algebras of type $A$ by Hu and Mathas
(\cite[Theorem 3.13]{HuMathas}). Typically, the ground ring for the formulae
is a ring of polynomials or Laurent polynomials (with one or more variables)
over the integers. Changing the ground rings to fields, satisfying some mild
assumptions, semisimplicity criteria are obtained that imply semisimplicity
in an open set of parameter values. As semisimplicity means non-vanishing
of all Gram determinants, it follows that the factors in the formulae do
not vanish. Therefore, determinants of swich matrices generically are
non-zero and thus not zero-divisors over (Laurent) polynomial rings or
over fields. There are however explicit examples where some determinants
of swich matrices do vanish.

\bigskip

{\bf Acknowledgements.}

This work is the result of a long-standing collaboration between the
authors. The first and last authors would like to express their sincere
gratitude to the Institute for Algebra and Number Theory, in particular to
Steffen Koenig, his students and postdoctoral researchers, for their warm
hospitality, creating an optimal environment to carry out research.
Paula Carvalho and Christian Lomp were partially supported by CMUP – Centro
de Matem\'{a}tica da Universidade do Porto, member of LASI, which is financed by
national funds through FCT – Funda\c{c}\~{a}o para a
Ci\^encia e a Tecnologia, I.P.,
under the project with reference UID/00144/2025, doi:
https://doi.org/10.54499/UID/00144/2025.


\bigskip

\begin{bibdiv}
  \begin{biblist}
    \bib{Atiyah}{book}{
      AUTHOR={Atiyah, Michael Francis},
      Author={Macdonald, Ian Grant},
     TITLE = {Introduction to commutative algebra},
 PUBLISHER = {Addison-Wesley Publishing Co., Reading, Mass.-London-Don
              Mills, Ont.},
      YEAR = {1969},
     PAGES = {ix+128},
 }
    
 \bib{BN}{article}{
   author={Baum, Paul},
   author={Nistor, Victor},
   title={Periodic cyclic homology of Iwahori-Hecke algebras},
   journal={$K$-Theory},
   volume={27},
   date={2002},
   number={4},
   pages={329--357},
   issn={0920-3036},
}

\bib{Brown}{book}{
   author={Brown, William C.},
   title={Matrices over commutative rings},
   series={Monographs and Textbooks in Pure and Applied Mathematics},
   volume={169},
   publisher={Marcel Dekker, Inc., New York},
   date={1993},
   pages={viii+281},
   isbn={0-8247-8755-2},
}

\bib{Brown1955}{article}{
 AUTHOR = {Brown, William Price},
     TITLE = {Generalized matrix algebras},
   JOURNAL = {Canadian J. Math.},
    VOLUME = {7},
      YEAR = {1955},
     PAGES = {188--190},
}
     
\bib{CKLS}{article}{
   author={Carvalho, Paula A. A. B.},
   author={Koenig, Steffen},
   author={Lomp, Christian},
   author={Shalile, Armin},
   title={Ring theoretical properties of affine cellular algebras},
   journal={J. Algebra},
   volume={476},
   date={2017},
   pages={494--518},
   issn={0021-8693},
}

\bib{GeckJacon}{book}{
   author={Geck, Meinolf},
   author={Jacon, Nicolas},
   title={Representations of Hecke algebras at roots of unity},
   series={Algebra and Applications},
   volume={15},
   publisher={Springer-Verlag London, Ltd., London},
   date={2011},
   pages={xii+401},
   isbn={978-0-85729-715-0},
}

\bib{GrahamLehrer}{article}{
   author={Graham, John J.},
   author={Lehrer, Gustav I.},
   title={Cellular algebras},
   journal={Invent. Math.},
   volume={123},
   date={1996},
   number={1},
   pages={1--34},
   issn={0020-9910},
 }

 \bib{Green}{book}{
   author={Green, James Alexander},
   title={Polynomial representations of $GL_n$},
   series={Lecture Notes in Mathematics},
   volume={830},
   date={1980},
   publisher={Springer-Verlag},
   }

\bib{HuMathas}{article}{
    AUTHOR = {Hu, Jun}, author={Mathas, Andrew},
     TITLE = {Seminormal forms and cyclotomic quiver {H}ecke algebras of
              type {$A$}},
   JOURNAL = {Math. Ann.},
    VOLUME = {364},
      YEAR = {2016},
    NUMBER = {3-4},
    PAGES = {1189--1254},
    }

\bib {humphreys}{book}{
    AUTHOR = {Humphreys, James E.},
     TITLE = {Reflection groups and {C}oxeter groups},
    SERIES = {Cambridge Studies in Advanced Mathematics},
    VOLUME = {29},
 PUBLISHER = {Cambridge University Press, Cambridge},
      YEAR = {1990},
     PAGES = {xii+204},
}
    
\bib{KazhdanLusztig}{article}{
   author={Kazhdan, David},
   author={Lusztig, George},
   title={Representations of Coxeter groups and Hecke algebras},
   journal={Invent. Math.},
   volume={53},
   date={1979},
   number={2},
   pages={165--184},
   issn={0020-9910},
}

\bib{Kleshchev}{article}{
AUTHOR = {Kleshchev, Alexander S.},
     TITLE = {Affine highest weight categories and affine quasihereditary
              algebras},
   JOURNAL = {Proc. Lond. Math. Soc. (3)},
    VOLUME = {110},
      YEAR = {2015},
    NUMBER = {4},
    PAGES = {841--882},
    }

\bib{KLM}{article}{
   author={Kleshchev, Alexander S.},
   author={Loubert, Joseph W.},
   author={Miemietz, Vanessa},
   title={Affine cellularity of Khovanov-Lauda-Rouquier algebras in type
   $A$},
   journal={J. Lond. Math. Soc. (2)},
   volume={88},
   date={2013},
   number={2},
   pages={338--358},
   issn={0024-6107},
}

\bib{KL}{article}{
   author={Kleshchev, Alexander S.},
   author={Loubert, Joseph W.},
   title={Affine cellularity of Khovanov-Lauda-Rouquier algebras of finite
   types},
   journal={Int. Math. Res. Not. IMRN},
   date={2015},
   number={14},
   pages={5659--5709},
   issn={1073-7928},
}
 
 \bib{KS}{article}{
   author={Kleshchev, Alexander S.},
   author={Steinberg, David J.},
   title={Homomorphisms between standard modules over finite-type KLR
   algebras},
   journal={Compos. Math.},
   volume={153},
   date={2017},
   number={3},
   pages={621--646},
   issn={0010-437X},
}

\bib{KXcellular}{article}{
   author={Koenig, Steffen},
   author={Xi, Changchang},
TITLE = {On the structure of cellular algebras},
 BOOKTITLE = {Algebras and modules, {II} ({G}eiranger, 1996)},
    SERIES = {CMS Conf. Proc.},
    VOLUME = {24},
     PAGES = {365--386},
 PUBLISHER = {Amer. Math. Soc., Providence, RI},
 YEAR = {1998},
 }

\bib{KXwhenis}{article}{
   author={Koenig, Steffen},
   author={Xi, Changchang},
       TITLE = {When is a cellular algebra quasi-hereditary?},
   JOURNAL = {Math. Ann.},
    VOLUME = {315},
      YEAR = {1999},
    NUMBER = {2},
    PAGES = {281--293},
    }

\bib{KXaffine}{article}{
   author={Koenig, Steffen},
   author={Xi, Changchang},
   title={Affine cellular algebras},
   journal={Adv. Math.},
   volume={229},
   date={2012},
   number={1},
   pages={139--182},
   issn={0001-8708},
}

\bib{Lusztig2}{article}{
	author={Lusztig, George},
	title = {Cells in affine Weyl groups. II.},
	journal = {J. Algebra},
	volume={109}, 
	pages={536--548},
	date={1987}
	}

\bib{Mathas}{article}{
   author={Mathas, Andrew},
   title={Seminormal forms and Gram determinants for cellular algebras},
   note={With an appendix by Marcos Soriano},
   journal={J. Reine Angew. Math.},
   volume={619},
   date={2008},
   pages={141--173},
   issn={0075-4102},
}

\bib{Mathas99}{book}{
   author={Mathas, Andrew},
   title={Iwahori-Hecke algebras and Schur algebras of the symmetric group},
   series={University Lecture Series},
   volume={15},
   publisher={American Mathematical Society, Providence, RI},
   date={1999},
   pages={xiv+188},
   isbn={0-8218-1926-7},
}

\bib{McCR}{book}{
   author={McConnell, John C.},
   author={Robson, J. Chris},
   title={Noncommutative Noetherian rings},
   series={Graduate Studies in Mathematics},
   volume={30},
   edition={Revised edition},
   note={With the cooperation of L. W. Small},
   publisher={American Mathematical Society, Providence, RI},
   date={2001},
   pages={xx+636},
   isbn={0-8218-2169-5},
 }

 \bib{RuiSibrauer}{article}{
 AUTHOR = {Rui, Hebing}, author={Si, Mei},
     TITLE = {Discriminants of {B}rauer algebras},
   JOURNAL = {Math. Z.},
    VOLUME = {258},
      YEAR = {2008},
    NUMBER = {4},
    PAGES = {925--944},
    }

 \bib{RuiSibmw}{article}{
 AUTHOR = {Rui, Hebing}, author={Si, Mei},
     TITLE = {Gram determinants and semisimplicity criteria for
              {B}irman-{W}enzl algebras},
   JOURNAL = {J. Reine Angew. Math.},
     VOLUME = {631},
      YEAR = {2009},
     PAGES = {153--179},
}
     
\bib{Xi}{article}{
   author={Xi, Nanhua},
   title={The based ring of two-sided cells of affine Weyl groups of type
   $\widetilde A_{n-1}$},
   journal={Mem. Amer. Math. Soc.},
   volume={157},
   date={2002},
   number={749},
   pages={xiv+95},
   issn={0065-9266},
}

 \end{biblist}
\end{bibdiv}
\end{document}